 \newcommand{\BC}{{\mathbb {C}}} 
 \newcommand{\BG}{{\mathbb {G}}} \newcommand{\BH}{{\mathbb {H}}}
 \newcommand{\BN}{{\mathbb {N}}} 
 \newcommand{\BR}{{\mathbb {R}}} 
 \newcommand{\BZ}{{\mathbb {Z}}}
\newcommand{\CI}{{\mathcal {I}}} 
\newcommand{\CM}{{\mathcal {M}}} \newcommand{\CO}{{\mathcal {O}}}
 \newcommand{\CT}{{\mathcal {T}}}
\newcommand{\GL}{{\mathrm {GL}}} 
\newcommand{\SL}{{\mathrm {SL}}} 
\newcommand{\SO}{{\mathrm{SO}}}
\newcommand{\Sp}{{\mathrm{Sp}}}
\newcommand{\Hom}{\mathrm{Hom}}
\newcommand{\ind}{\mathrm{ind}} \newcommand{\Ind}{\mathrm{Ind}}
\newcommand{\ra}{\rightarrow}
\def\frm{\mathfrak{m}}
\def\frp{\mathfrak{p}}
\def\-{^{-1}}
\def\shskip{\hskip 0.5 pt}
\g@addto@macro\normalsize{\setlength\abovedisplayskip{3pt}}
\g@addto@macro\normalsize{\setlength\belowdisplayskip{3pt}}
\newcommand{\delete}[1]{}
\theoremstyle{plain}
\newtheorem{thm}{Theorem}[section] \newtheorem{cor}[thm]{Corollary}
\newtheorem{lem}[thm]{Lemma}  \newtheorem{prop}[thm]{Proposition}
\newtheorem {conj}[thm]{Conjecture} \newtheorem{defn}[thm]{Definition}
\newtheorem {rem}[thm]{Remark}
\numberwithin{equation}{section}
\newcommand{\Ext}[1]{\textup{Ext}^{#1}}
\newcommand{\St}{\textup{St}}\newcommand{\EP}{\textup{EP}}
\newcommand{\transpose}[1]{{}^t #1}\newcommand{\transinv}[1]{{}^t#1^{-1}}
\begin{document}

	\title{Ext-distinction for $p$-adic symmetric spaces}

	\author{Chang Yang}

	\address{Key Laboratory of High Performance Computing and Stochastic Information Processing (HPCSIP), Hunan Normal University, School of Mathematics and Statistics, Changsha, 410081, China}
	\email{cyang@hunnu.edu.cn}
	
	%	\subjclass[2010]{22E50, 33C10}
	\keywords{}
	%\thanks{The first author is supported by the National Natural Science Foundation of China [Grant 11771131].}
	
\maketitle

\begin{abstract}
	 Let $G/H$ be a $p$-adic symmetric space. We compute explicitly the higher relative extension groups for all discrete series representations of $G$ in two examples: the symplectic case and the linear case. The results have immediate applications to the computation of the Euler-Poincar\'e pairing, the alternating sum of the dimensions of the Ext-groups. In the linear case we confirm a conjecture of Wan which asserts the equality of the Euler-Poincar\'e pairing with the geometric multiplicity for any irreducible representation of $G$. In the symplectic case we reduce the verification of Wan's conjecture to the case of discrete series representations. We also determine all relatively supercuspidal representations in both cases.
\end{abstract}

\section{Introduction}

\subsection{Main results}

Throughout this work, let $F$ be a nonarchimedean local field of characteristic $0$. Let $G = \BG(F)$ be be the group of $F$-points of a connected reductive group $\BG$ over $F$ and $H = G^{\theta}$ be the subgroup of fixed points of $\theta$ of $G$, where $\theta$ is an involution on $\BG$ defined over $F$. Let $A_{G,H}$ be the maximal split torus of $Z_G \cap H$ with $Z_G$ the center of $G$. For an irreducible representation $\pi$ of $G$, the multiplicity $m(\pi)$ of $\pi$ is defined by
\begin{align*}
	m(\pi) = \dim \Hom_H (\pi, \BC).
\end{align*}
Harmonic analysis on the symmetric space $G/H$ requires a good understanding of $m(\pi)$ (c.f. \cite{SV-periods}). In \cite{Wan-Multiplicity-JEMS}, build upon the works where a relative trace formula is exploited to study the multiplicities, Wan defines a geometric multiplicity $m_{geom}(\vartheta)$ for any quasi-character $\vartheta$ on $G$ as an integral formula involving $\vartheta$ ( the definition in loc. cit. is for general spherical varieties, and we refer to \cite[Definition 6.1]{Wan-Multiplicity-JEMS} for a precise form of the definition.) The conjectured multiplicity formula in \cite[Conjecture 6.4]{Wan-Multiplicity-JEMS} is that
\begin{align}\label{eq::Intro-Mul=geomMul}
	m(\pi) = m_{geom}(\theta_{\pi}),
\end{align}
where $\theta_{\pi}$ is the Harish-Chandra character of $\pi$, for any supercuspidal representation $\pi$ of $G$. Moreover, when $G/H$ is tempered (resp. strongly tempered), that is, all the matrix coefficients of discrete series representations (resp. tempered representations) of $G$ are integrable over $H/A_{G,H}$, the multiplicity formula \eqref{eq::Intro-Mul=geomMul} holds for all discrete series representations (resp. tempered representations). We refer the reader to \cite{Wan-Multiplicity-JEMS} and references therein for the examples where the multiplicity formula has been proven.

In \cite{Prasad-Ext-BranchingLaw-ICM2018}, Prasad proposes a homological algebra approach to the study of $m(\pi)$ in the context of branching laws for representations of a group to its subgroups. In our context, according to \cite{Prasad-Ext-BranchingLaw-ICM2018}, it is natural to consider the higher extension groups $\Ext{n} (\pi|_H,\BC)$ in the category of smooth representations of $H/A_{G,H}$. As observed by Prasad, in order for \eqref{eq::Intro-Mul=geomMul} to hold for all irreducible representations, the multiplicity $m(\pi)$ should be replaced by $\EP(\pi)$, the Euler-Poincar\'e paring of $\pi$, the alternating sum of the dimensions of the $\textup{Ext}$-groups. The conjectured $\textup{EP}$ formula in \cite[Conjecture 6.5]{Wan-Multiplicity-JEMS} is then
\begin{align}\label{eq::Conjecture-EP}
	\EP(\pi)  = m_{geom}(\theta_{\pi}).
\end{align}
for any finite length representation $\pi$ of $G$. In view of the multiplicity formula \eqref{eq::Intro-Mul=geomMul}, it is natural to expect the vanishing of higher extension groups:
\begin{align*}
	\Ext{n}(\pi|_H,\BC) = 0,\quad n >  0
\end{align*}
for every discrete series representation (resp. tempered representations) $\pi$ when $G/H$ is tempered (resp. strongly tempered). These expectations reflect  a deep relationship between harmonic analysis on $G/H$ and homological algebra of $G/H$.

The main purpose of this paper is to verify the $\EP$ formula \eqref{eq::Conjecture-EP} in two specific examples, the symplectic case and the linear case. Since both sides of \eqref{eq::Conjecture-EP} can be viewed as a function on the Grothendieck group of the category of smooth representations of finite length of $G$, it suffices to verify \eqref{eq::Conjecture-EP} for standard modules of $G$. By inductive arguments for $m_{geom}(\theta_{\pi})$ and $\EP_{H}(\pi,\BC)$ respectively in Section \ref{sec::EP-symplectic} and Section \ref{sec::EP-linear}, we reduce the verification to the case of discrete series. 

\subsubsection{The linear case}

Let $G= \GL_{2n}(F)$, $Z$ be the center of $G$ and $H = \GL_{n}(F) \times \GL_{n}(F)$. The multiplicity formula for discrete series has been established by Beuzart-Plessis and Wan in \cite{Beuzart-Plessis-Wan-Duke-Shalika}, combining with the relationship with Shalika model. Our main result in this case is the following vanishing result, from which \eqref{eq::Conjecture-EP} follows.

\begin{thm} 
	Let $\pi$ be a discrete series representation of $G$ with trivial central character. Then
	\begin{align*}
		\Ext{n}_{H/Z} (\pi,\BC) = 0,\quad n >0.
	\end{align*}
\end{thm}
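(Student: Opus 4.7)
The plan is to combine two complementary ingredients: a cohomological duality for Ext groups in the symmetric setting, and the explicit Shalika-model realization of $H$-invariant linear forms on distinguished discrete series. The duality constrains the degrees in which $\Ext{i}_{H/Z}(\pi,\BC)$ can be non-zero, while the Shalika model together with Casselman's criterion for discrete series should force the candidate non-zero Ext groups in positive degree to vanish.

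Concretely, I would first invoke a Prasad-type Ext duality for the symmetric pair $(G,H)=(\GL_{2n}(F),\GL_n(F)\times \GL_n(F))$ (cf.\ the general formalism of \cite{Prasad-Ext-BranchingLaw-ICM2018} and its analogues for symmetric subgroups), which should yield a natural isomorphism
\begin{align*}
\Ext{i}_{H/Z}(\pi,\BC) \;\cong\; \Ext{d-i}_{H/Z}\!\left(\BC,\,\pi^\vee\otimes\eta\right)^{\vee}
\end{align*}
up to a twist by an explicit character $\eta$, where $d$ is the split rank of $G/HA_{G,H}$. The self-duality $\pi^\vee\cong\pi$ coming from the trivial central character (together with the outer involution swapping the two $\GL_n$ factors of $H$) then gives a symmetry between Ext groups in complementary degrees $i$ and $d-i$. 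Second, I would use the Jacquet--Rallis identification $\Hom_H(\pi,\BC)\cong\Hom_S(\pi,\psi_S)$, where $S$ is the Shalika subgroup of $G$ and $\psi_S$ the standard Shalika character, and try to promote this to a derived statement: the natural mechanism is a filtration of $\cind_S^G\psi_S$ by $S$-orbits on $G/H$, producing a spectral sequence whose $E_1$-terms are Ext groups of $\pi$ (or of its twisted Jacquet modules) over non-reductive subgroups built from the Shalika Levi. These can then be analyzed using the Zelevinsky description $\pi=\St_k(\rho)$ of a discrete series and the fact that every proper Jacquet module of $\pi$ is strictly sub-tempered.

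The main obstacle will be the analysis of the non-open orbits in the $S\bs G/H$ stratification. The open orbit recovers the honest Shalika functional in degree zero, but the remaining orbits contribute Ext groups involving twisted Jacquet modules of $\pi$, and one has to show that each such contribution vanishes in every positive degree. For discrete series this should be feasible by combining: (i) reduction, via the inductive framework of Section~\ref{sec::EP-linear}, to parabolic inductions from strictly smaller discrete series where the vanishing is inductively available, and (ii) a direct Casselman-pairing argument controlling the Jacquet modules $\pi_N$ appearing in the $E_1$-page. Once these contributions are shown to vanish in positive degree, the duality from the first step pins the only possible non-vanishing Ext to degree zero, giving $\Ext{i}_{H/Z}(\pi,\BC)=0$ for all $i>0$ as claimed.
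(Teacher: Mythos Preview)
Your proposal is not a proof; it is a strategy sketch with at least two genuine gaps that would prevent it from being completed as written.

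First, the self-duality step is false in general. Trivial central character on $G=\GL_{2n}(F)$ does not imply $\pi^{\vee}\cong\pi$: the discrete series $\St_k(\rho)$ has $\pi^{\vee}\cong\St_k(\rho^{\vee})$, and there are plenty of cuspidal $\rho$ with $\omega_{\rho}$ of finite order (so that $\omega_{\pi}$ is trivial) but $\rho\not\cong\rho^{\vee}$. The outer involution on $H$ swapping the two $\GL_n$ factors does not repair this; it only tells you that $\Hom_H(\pi,\BC)\cong\Hom_H(\pi^{\vee},\BC)$, which is a statement about multiplicities, not an isomorphism $\pi\cong\pi^{\vee}$. So the ``symmetry $i\leftrightarrow d-i$'' you invoke is unavailable. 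Even if it were available, a Poincar\'e-type symmetry between $\Ext{i}$ and $\Ext{d-i}$ does not by itself force vanishing in positive degree; you would still need an independent argument ruling out, say, $\Ext{1}\neq 0$ when $d\geqslant 2$.

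Second, the derived Shalika step is left entirely at the level of wishful thinking. The honest Hom-level identity $\Hom_H(\pi,\BC)\cong\Hom_S(\pi,\psi_S)$ for discrete series is a nontrivial theorem, and there is no general ``derived Jacquet--Rallis'' in the literature that upgrades it to an isomorphism of Ext groups. Your $S\backslash G/H$ filtration produces a spectral sequence whose $E_1$-terms involve Ext groups over \emph{non-reductive} subgroups with nontrivial unipotent part; controlling these requires exactly the kind of orbit-by-orbit computation you defer, and there is no Casselman-type argument that kills them uniformly for all $k$.

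The paper's proof is completely different and elementary by comparison. It is a direct induction on the length $k$ of the Zelevinsky segment $\Delta$ with $\pi=\textup{L}(\Delta)$. The base case $k=1$ is immediate since a cuspidal $\rho$ restricted to $H$ is projective in $\CM(H/Z)$. For the inductive step one uses the two Zelevinsky short exact sequences
\begin{align*}
0 \ra \pi \ra \textup{L}(\Delta_1)\times\nu^{-(k-1)/2}\rho \ra \textup{L}(\nu^{-(k-1)/2}\rho,\Delta_1)\ra 0,\\
0 \ra \textup{L}(\nu^{-(k-1)/2}\rho,\Delta_1) \ra \nu^{-(k-1)/2}\rho\times\textup{L}(\Delta_1) \ra \pi \ra 0,
\end{align*}
and shows, via the geometric lemma (the $P$-orbit analysis of Section~\ref{sec::linear}), that the two induced representations in the middle have $\Ext{i}_{H/Z}(\,\cdot\,,\BC)=0$ for $i\geqslant 1$ and $i\geqslant 2$ respectively. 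The orbit contributions are killed either by central-character mismatches (Lemma~\ref{lem::cor-geo-lemm-linear}) or, on the one surviving orbit, by a Hochschild--Serre spectral sequence together with the induction hypothesis applied to $\textup{L}(\Delta_2)$ for the shorter segment $\Delta_2$. No duality and no Shalika model enter.
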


\begin{rem}
	In the Jacquet-Rallis case $(G,H) = (\GL_{n}(E), \GL_{n}(F))$ where $E/F$ is a quadratic extension, we have a similar vanishing result. The proof is almost identical to that in the linear case. So in the Jacquet-Rallis case, the $\EP$ formula \eqref{eq::Conjecture-EP} also holds.
\end{rem}
As $G/H$ is a tempered variety, our result meets the general expectation mentioned above. In other contexts, there are some recent progresses on the $\textup{Ext}$-vanishing results, see \cite{Chan-Savin-Ext-Vanishing-Duke,Chan-Ext-StandardModule-MathZ} by Chan for the Rankin-Selberg case,  \cite{Cai-Fan-TripleProduct-MRL} by Cai and Fan for the triple product case, and \cite{Chen-Ext-Vanishing-Bessel} by Chen for the Gan-Gross-Prasad case.

\subsubsection{The symplectic case}

Let $G= \GL_{2n}(F)$ and $H = \Sp_{2n}(F)$. The multiplicity formula for cuspidal representations has been proven by the recent work of Beuzart-Plessis and Wan \cite{Beuzart-Plessis-Wan}. Our main result in this case is the following computations.
\begin{thm}
	Let $\rho$ be a cuspidal representation of $\GL_d(F)$ and $\pi = \St_k(\rho)$ be the generalized Steinberg representation of $G$ ($2n = kd$). Then
	\begin{itemize}
		\item If $k \neq 2$, we have
		\begin{align*}
			\Ext{i}_H (\St_k (\rho), \BC) = 0\ \textup{ for all }i \geqslant 0.
		\end{align*}
		\item 
		If $k = 2$, we have
		\begin{align*}
			\Ext{1}_H (\St_2(\rho), \BC) = \BC, \quad \Ext{i}_H (\St_2(\rho), \BC) = 0,\ i \neq 1.
		\end{align*}
	\end{itemize}	
\end{thm}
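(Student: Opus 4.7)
The plan is to resolve $\St_k(\rho)$ by parabolically induced representations, compute the $H$-Ext of each term via the geometric lemma for the symmetric pair $(\GL_{2n},\Sp_{2n})$, and assemble the pieces with a spectral sequence.

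First, I would use the standard Zelevinsky--Aubert (equivalently Schneider--Stuhler) resolution attached to the segment $[\rho\nu^{-(k-1)/2},\ldots,\rho\nu^{(k-1)/2}]$. The generalized Steinberg $\St_k(\rho)$ sits at the end of a bounded complex whose terms are parabolic inductions $I_{Q_J}(\sigma_J)$ indexed by subsets $J\subset\{1,\dots,k-1\}$: the parabolic $Q_J$ has $\GL_d$-blocks grouped according to the consecutive runs of $J^c$, and $\sigma_J$ is the tensor product of Speh-type constituents of the corresponding sub-segments. Applying $\Hom_H(-,\BC)$ produces a spectral sequence
$$E_1^{p,q}=\bigoplus_{|J|=p}\Ext{q}_H(I_{Q_J}(\sigma_J),\BC)\Rightarrow\Ext{p+q}_H(\St_k(\rho),\BC).$$

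Next I would apply the geometric lemma of Blanc--Delorme and Offen to each term. For a standard parabolic $Q$ with Levi $M=\GL_{m_1}\times\cdots\times\GL_{m_r}$, the $\Sp_{2n}$-orbits on $\GL_{2n}/Q$ are parametrized by symmetric $r\times r$ non-negative integer matrices with row sums $(m_1,\dots,m_r)$ and even diagonal entries. Each orbit $\CO$ contributes a derived Hom of $\sigma_J$, twisted by an orbit-dependent modulus character $\delta_\CO$, over a product $M^{\theta_\CO}$ whose factors are smaller symplectic groups (from diagonal entries) and diagonally embedded $\GL_d$'s (from off-diagonal matched pairs). Because $\rho$ is cuspidal, any contribution with a genuine $\Sp_{2d}$-factor vanishes in all degrees: $\rho$ is projective in its Bernstein component of $\GL_d(F)$ and is not $\Sp_d(F)$-distinguished by Heumos--Rallis, so a Shapiro argument gives $\Ext{\bullet}_{\Sp_d(F)}(\rho,\BC)=\Ext{\bullet}_{\GL_d(F)}(\rho,\Ind_{\Sp_d(F)}^{\GL_d(F)}\BC)=0$.

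Thus only the ``perfect-matching'' orbits survive, and each such contribution reduces to $\Ext{\bullet}_{\GL_d(F)}(\rho\nu^{a},\rho\nu^{b}\otimes\delta)$ for an explicit twist $\delta$ determined by $\delta_\CO$ and the segment exponents. By Schur's lemma and the projectivity of cuspidal representations, this is $\BC$ in degree $0$ when the exponents and the modulus cancel exactly and vanishes otherwise. A careful bookkeeping of the segment exponents $(k-1)/2-i$ against $\delta_\CO$ shows the balance is achieved exactly once across the whole resolution: in the $k=2$ case at the unique perfect-matching orbit on $\GL_{2d}/P_{(d,d)}$ that pairs $\rho\nu^{1/2}$ with $\rho\nu^{-1/2}$ via the Siegel Levi $\GL_d\subset\Sp_{2d}$. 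In the spectral sequence this class sits at $(p,q)=(1,0)$ and is the only nonzero entry on its anti-diagonal, so it survives to $E_\infty$ and yields $\Ext{1}_H(\St_2(\rho),\BC)=\BC$. For $k\ne 2$ the shift mismatches kill every would-be contribution and the spectral sequence collapses to zero.

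The principal obstacle will be the careful tracking of the modulus characters $\delta_\CO$ against the segment exponents across every orbit and every term of the resolution, both to confirm that no accidental second match produces an unwanted nonzero Ext and to check that the unique surviving class in the $k=2$ case is neither hit by nor emits a nontrivial differential on later pages. A secondary technical point is the verification that $\Ext{\bullet}_{\Sp_d(F)}(\rho,\BC)=0$ for cuspidal $\rho$, which reduces via Shapiro and Bernstein's projectivity to the non-distinction statement of Heumos--Rallis.
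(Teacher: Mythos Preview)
Your resolution and spectral-sequence setup match the paper's, but the orbit analysis is inverted. After the geometric lemma applied to $I_{Q_J}^G\sigma_J$, the representation seen by a symplectic factor $\Sp_{2m}$ of $L_x$ is not the cuspidal $\rho$ but a Speh block $Z(\Delta')$ (a component of the Jacquet module of $\sigma_J$). Heumos--Rallis non-distinction applies only to generic representations, whereas even-length Spehs are $\Sp$-distinguished (Mitra--Offen--Sayag); so the ``$\Sp$-factor kills everything'' step fails. Conversely, the ``perfect-matching'' orbits you expect to survive do not: because the sub-segments $\Delta_{J,i}$ are in increasing order, the modulus-vs-exponent balance $w_{\rho_i}=w_{\rho_{\tau(i)}}|\cdot|^{n_i}$ for $i<\tau(i)$ never holds, so these orbits are irrelevant by central-character reasons. (A smaller slip: for cuspidal $\rho$ one has $\Ext{i}_{\GL_d}(\rho,\rho)=\BC$ for $i=0$ \emph{and} $i=1$, not just $i=0$.)

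The actual nonzero $E_1$-entries all come from the closed $M_J$-admissible orbit (trivial $\tau$), and they are $\Hom_{\prod_i\Sp}(\bigotimes_iZ(\Delta_{J,i}),\BC)=\BC$ precisely when every $l(\Delta_{J,i})$ is even, i.e.\ when $J$ is ``of even type''. For $k=2$ only $J=\{1\}$ qualifies, giving the single class at $(p,q)=(1,0)$ and hence $\Ext{1}=\BC$. For even $k>2$ there are $2^{k/2-1}$ such $J$, arranged as a row $\BC^{\binom{k_0}{k_0}}\to\cdots\to\BC^{\binom{k_0}{0}}$ with $k_0=k/2-1$, and you must show this complex is exact; the paper does this by proving that for $I\subset J$ of even type with $|J\setminus I|=1$ the restriction map $\Hom_H(I_{P_I}^G\sigma_I,\BC)\to\Hom_H(I_{P_J}^G\sigma_J,\BC)$ is nonzero (equivalently, $\Hom_H(K_{J,I},\BC)=0$), so the differentials are identified with the standard Koszul differentials. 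Your proposal misses both the correct source of the nonvanishing and this cancellation step.
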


Due to the lack of precise information of characters of discrete series representations, we are unable to very \eqref{eq::Conjecture-EP} except for the $n = 1$ case, which is a direct consequence of Kazhdan orthogonality (see \cite[Section 6]{Prasad-IHES-Note}). Note that in this case $G/H$ is not tempered, so our computations are in alignment with the general expectations mentioned above.

In this paper we also discuss another notion that is important for the harmonic analysis on $p$-adic symmetric spaces, \emph{the relatively supercuspidal representations}. By the subrepresentation theorem of Kato-Takano \cite{Kato-Takano-Subrepresentation-symmetric}, these representations are the building blocks of irreducible distinguished representations (i.e., $m(\pi) \neq 0$), the same role as supercuspidal representations to irreducible representations. In \cite{Cai-Fan-RelativeCuspidal-Ext}, Cai and Fan introduced a new idea to determine relatively supercuspidals by computing the highest extension groups via the Schneider-Stuhler duaility. Following their idea, we determine all relatively supercuspidals in both cases in Proposition \ref{prop::RSC-symplectic} and Proposition \ref{prop::RSC-linear}. We refer the reader to Section \ref{sec::GLn} for unexplained notation below.

\begin{thm}\label{thm::Intro-RSC}
   $\ $

  \textup{(1)}$\quad$In the symplectic case, an irreducible representation $\pi$ of $G$ is relatively supercuspidal if and only if $\pi = Z(\nu^{-1/2}\rho,\nu^{1/2}\rho)$ is the unique subrepresentation of $\nu^{-1/2}\rho \times \nu^{1/2}\rho$  for some cuspidal $\rho$.
  
  \textup{(2)} $\quad$ In the linear case, an irreducible representation $\pi$ of $G$ is relatively supercuspidal if and only if $\pi = I_P^G (\sigma)$ where $P = MU$ is the standard parabolic subgroup of type $(2n_1,2n_2,\cdots,2n_k)$ and $\sigma$ is a regular $M \cap H$-distinguished cuspidal representation of $M$.
\end{thm}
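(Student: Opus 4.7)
The plan is to apply the approach of Cai--Fan \cite{Cai-Fan-RelativeCuspidal-Ext}, who recast relative supercuspidality as a cohomological condition: via the Schneider--Stuhler duality in the relative setting, an irreducible representation $\pi$ of $G$ is simultaneously $H$-distinguished and relatively supercuspidal precisely when the top relative extension group $\Ext{d}_{H}(\pi,\BC)$ is non-zero, where $d$ is the appropriate cohomological dimension. This converts the classification problem into a Jacquet-functor calculation.

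The strategy then rests on the subrepresentation theorem of Kato--Takano \cite{Kato-Takano-Subrepresentation-symmetric}: every irreducible $H$-distinguished $\pi$ embeds in $I_{P}^{G}(\sigma)$ for some $\theta$-split standard parabolic $P = MU$ and some $(M \cap H)$-distinguished cuspidal $\sigma$ of $M$, and $\pi$ is relatively supercuspidal if and only if no Jacquet module $r_{P}^{G}(\pi)$ along a \emph{proper} $\theta$-split $P$ admits an $(M \cap H)$-invariant functional. I would therefore proceed in three steps: first, classify the $(M \cap H)$-distinguished cuspidals of every $\theta$-split Levi $M$; second, identify when $I_{P}^{G}(\sigma)$ is irreducible and $H$-distinguished; third, use the Bernstein--Zelevinsky geometric lemma to check which proper Jacquet modules of $\pi$ support distinguished functionals.

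In the linear case this runs smoothly: the $\theta$-split Levis are those of type $(2n_{1},\ldots,2n_{k})$, the distinguished cuspidals of $M$ are tensor products of Shalika-distinguished cuspidals on each $\GL_{2n_{i}}$, and the regularity hypothesis on $\sigma$ makes $I_{P}^{G}(\sigma)$ irreducible while forcing every proper $\theta$-split Jacquet module to be undistinguished, yielding part (2). In the symplectic case only very restricted $\theta$-split Levi data contribute, and an analysis of the cuspidal support together with the Heumos--Rallis style description of $\Sp_{2n}$-distinguished representations of Levi quotients singles out the segment $\{\nu^{-1/2}\rho,\nu^{1/2}\rho\}$; the unique subrepresentation $Z(\nu^{-1/2}\rho,\nu^{1/2}\rho)$ of $\nu^{-1/2}\rho \times \nu^{1/2}\rho$ is then shown to be the only relatively supercuspidal irreducible $H$-distinguished representation.

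The main obstacle will be the symplectic case, since $Z(\nu^{-1/2}\rho,\nu^{1/2}\rho)$ is not cuspidal and hence carries non-trivial $\theta$-split Jacquet modules; one must verify carefully that none of these Jacquet modules admits an $(M \cap H)$-invariant functional, and simultaneously rule out the other Zelevinsky constituents of $\nu^{-1/2}\rho \times \nu^{1/2}\rho$ and all segments of length $\geqslant 3$. This is where the explicit geometric-lemma bookkeeping for $\nu^{-1/2}\rho \times \nu^{1/2}\rho$ meets the classification of $\Sp_{2n}$-distinguished representations, and where the routine combinatorics must be supplemented by case-specific arguments on which constituents carry an $\Sp$-invariant linear form.
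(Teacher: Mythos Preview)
Your proposal conflates two distinct criteria and, more importantly, omits the reduction step that makes both cases tractable.

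First, the Cai--Fan criterion is not that $\Ext{d}_{H}(\pi,\BC)\neq 0$; via Schneider--Stuhler duality (in the multiplicity-one situation) it asserts that $\pi$ is relatively supercuspidal if and only if the top Ext-group of the \emph{Aubert--Zelevinsky dual} $D(\pi)^{\vee}$ is non-zero. This is not a notational slip: in the symplectic case, once one knows $\pi = \textup{L}(\nu^{-1/2}\Delta,\nu^{1/2}\Delta)$, the question becomes whether $\Ext{2l-1}_{H}(Z(\nu^{-1/2}\Delta^{\vee},\nu^{1/2}\Delta^{\vee}),\BC)$ is non-zero, and it is precisely the paper's computations for $Z$-type representations (Proposition~\ref{prop::speh-ext}) and Theorem~\ref{thm::Ext-DS-GL-Sp} in the case $l=1$ that settle it. Your formulation would require top Ext-groups of $\textup{L}(\nu^{-1/2}\Delta,\nu^{1/2}\Delta)$ itself, which are not computed anywhere. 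Having stated the Ext criterion, you then silently switch to the Kato--Takano Jacquet-module criterion; these are related but not the same tool, and the paper uses the former, not the latter.

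Second, your ``only if'' direction has a genuine gap. You propose to run over all $H$-distinguished $\pi$ and inspect their $\theta$-split Jacquet modules; but the class of $H$-distinguished representations is large (in the symplectic case it is the full Klyachko-model classification of Mitra--Offen--Sayag), and the phrase ``analysis of the cuspidal support \ldots\ singles out the segment'' is not a proof. The paper bypasses this by first invoking the \emph{Plancherel classification of relatively discrete series}: Lapid--Offen show that in the symplectic case these are exactly the $\textup{L}(\nu^{-1/2}\Delta,\nu^{1/2}\Delta)$, and in the linear case Duhamel's Plancherel formula together with Arthur's endoscopic classification shows they are exactly $I_{P}^{G}(\sigma)$ with $\sigma$ a regular $(M\cap H)$-distinguished discrete series of a Levi of even type. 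Since relatively supercuspidal implies relatively discrete series, this reduces the problem to a one-parameter family, on which the (correctly stated) Cai--Fan Ext criterion can be evaluated. Your proposal omits this Plancherel input entirely; without it neither ``only if'' direction goes through as written.
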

In the symplectic case, the representations in Theorem \ref{thm::Intro-RSC} have been shown to be relatively supercuspidal by Kato-Takano in \cite[Section 8.3]{Kato-Takano-Subrepresentation-symmetric} by the Jacquet module methods.

%We say a few words about these computations. Our main tool is the very standard geometric lemma of Berntein-Zelevinsky, which provides us a filtration on the restriction of a parabolically induced %representation to a symmetric subgroup. We then manage to relate discrete series representations to various parabolically induced representations. The computations in the symplectic case is inspired by the %computations of extension groups of generalized Steinberg representation by Orlik in \cite{Orlik-Extensions-Steinberg-JoA}.

The paper is organized as follows. In Section \ref{sec::Preliminaries} we present some standard facts on the computations of extension groups. In particular we recall some facts on the Hochschild-Serre spectral sequences that is important for our computations later. In Section \ref{sec::gemeotric lemma} we give a formulation of the application of  the geometric lemma of Bernstein-Zelevinsky to $\textup{Ext}$ groups, following the framework of Offen in \cite{Offen-ParabolicInduction-JNT}. In Section \ref{sec::GLn} we recall some standard notions and well known facts about representations of general linear groups. In Section \ref{sec::symplectic} and Section \ref{sec::linear} we do explicit computations in two cases.

\subsection{Acknowledgement.} We warmly thank Chen Wan for his help in the writing of Section \ref{sec::EP-symplectic} and Section \ref{sec::EP-linear} and Dipendra Prasad for useful correspondences. We also thank Bin Xu, Kei Yuen Chan, Yangyu Fan and Rui Chen for useful conversations. Part of the work was done when the author was attending the BIRS-IASM workshop ``Arthur Packets'', and we want to thank the organizers for their warm hospitality. This work is supported by the National Natural Science Foundation of China (No. 12001191).

\section{Preliminaries}\label{sec::Preliminaries}

In this section we prepare some general facts that are useful for our computations later. Let $G$ be an $\ell$-group (c.f. \cite[Section 1]{B-Z-I}). Let $Z$ be a closed subgroup contained in the center $Z(G)$ of $G$ and $\chi$ be a character of $Z$. We denote by $\CM^{\,\chi}(G;Z)$ the category of smooth representations of $G$ on which $Z$ acts through the character $\chi$. When $Z = \{e\}$, we simply denote it by $\CM(G)$. We have a natural equivalence of categories between $\CM^{\mathbf{1}} (G,Z)$ and $\CM (G/Z)$, where $\mathbf{1}$ denotes the trivial representation of $Z$. The following fact lies at the basis for all our discussions below.
\begin{lem}
	The category $\CM^{\,\chi} (G,Z)$ is abelian and has enough projective objects and injective objects.
\end{lem}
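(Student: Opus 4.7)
The plan has three parts: (i) transfer the abelian structure from $\CM(G)$; (ii) produce enough projectives by compact induction from central extensions of sufficiently small open compact subgroups; (iii) produce enough injectives by smooth induction from $Z$.

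For (i), I would observe that $\CM^{\chi}(G,Z)$ sits as a full subcategory of $\CM(G)$, which is abelian by the general theory of smooth representations of an $\ell$-group (\cite[Proposition 2.2]{B-Z-I}). Any subrepresentation, quotient, or direct sum of a smooth $G$-module on which the central subgroup $Z$ acts through $\chi$ still carries the same central character, so $\CM^{\chi}(G,Z)$ is closed under kernels, cokernels and arbitrary coproducts formed in $\CM(G)$, and the abelian structure simply restricts.

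For (ii), smoothness of $\chi$ (automatic, once $\chi$ appears as a central character on a nonzero smooth module) implies that the open compact subgroups $K\subset G$ satisfying $K\cap Z\subset \ker\chi$ form a fundamental system of neighborhoods of $1$. For such a $K$, the rule $\chi_K(kz):=\chi(z)$ defines a smooth character of the closed subgroup $KZ$, and I would set $P_K:=\cind_{KZ}^{G}\chi_K$. Frobenius reciprocity for compact induction gives
\[
\Hom_G(P_K,V)\cong V^K\quad \text{for all } V\in \CM^{\chi}(G,Z),
\]
and since $V\mapsto V^K$ is exact on smooth $G$-modules, $P_K$ is projective in $\CM^{\chi}(G,Z)$. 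Every vector in any $V$ is fixed by some such $K$, so a suitable direct sum of the $P_K$'s surjects onto $V$, yielding enough projectives.

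For (iii), given $V\in \CM^{\chi}(G,Z)$, the restriction $V|_Z$ is a direct sum of copies of $\chi$, hence automatically injective in the (semisimple) category $\CM^{\chi}(Z,Z)$. Since smooth induction $\Ind_Z^{G}$ is the right adjoint of the exact restriction functor $\CM^{\chi}(G,Z)\to \CM^{\chi}(Z,Z)$, the object $\Ind_Z^{G}(V|_Z)$ is injective in $\CM^{\chi}(G,Z)$; the map $v\mapsto (g\mapsto g\cdot v)$ then gives a $G$-equivariant embedding of $V$ into it, smoothness being inherited from $V$ and the transformation law $f(zg)=\chi(z)f(g)$ being forced by $Z$ being central. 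I do not foresee any serious obstacle: each step is a minor variant of Bernstein's standard construction, and the only mild subtlety is the extraction of a fundamental system of open compact $K$'s with $K\cap Z\subset\ker\chi$, which is immediate from smoothness of $\chi$.
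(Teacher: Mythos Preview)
Your argument is correct, but it takes a different route from the paper's. The paper dispatches the lemma in one line: starting from the known fact that $\CM(G)$ has enough projectives and injectives, it simply applies the $(Z,\chi)$-coinvariant functor to projectives and the $(Z,\chi)$-invariant functor to injectives, observing that these land in $\CM^{\chi}(G,Z)$ and retain the relevant universal property (the point being that for $V\in\CM^{\chi}(G,Z)$ one has $\Hom_G(P_\chi,V)=\Hom_G(P,V)$, and dually for invariants).

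Your approach instead builds explicit generators: compact inductions $\cind_{KZ}^G\chi_K$ on the projective side, and $\Ind_Z^G(V|_Z)$ on the injective side. The two are closely related---your $P_K$ is exactly what the paper's recipe produces when applied to the standard projective $\cind_K^G\mathbf{1}$---but your version has the merit of giving concrete objects and of making the adjunctions visible, at the cost of a few more verifications (well-definedness of $\chi_K$, the central character of the induced module, the embedding $V\hookrightarrow\Ind_Z^G(V|_Z)$). The paper's version is terser and transports the problem wholesale to $\CM(G)$, which is convenient when one already has that theory in hand.
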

\begin{proof}
	The category $\CM(G)$ has enough projective objects and injective objects (see \cite[Section 2]{Prasad-Ext-BranchingLaw-ICM2018}). Taking $(Z,\chi)$-coinvariants in projective objects and $(Z,\chi)$-invariants in injective objects in $\CM(G)$ will give rise to projective and injective objects in $\CM^{\,\chi}(G,Z)$.
\end{proof}

By the lemma above, we can then define the extension group $\Ext{q}_{\CM^{\,\chi} (G,Z)} (V,W)$ for any two smooth representations of $G$ on which $Z$ acts via $\chi$. When $Z = \{e\}$, we write simple $\Ext{q}_G(\cdot,\cdot)$ for $\Ext{q}_{\CM(G)} (\cdot ,\cdot)$.

Following \cite[Section 1.8]{B-Z-I}, let $M$ and $U$ be closed subgroups of $G$ such that $M$ normalizes $U$, $M \cap U =\{e\}$ and the subgroup $P = MU$ is closed in $G$. Let $\psi$ be a character of $U$ normalized by $M$. We use the same notations $I_{U,\psi}$, $i_{U,\psi}$ and $r_{U,\psi}$ for the functors as in \cite[Section 1.8]{B-Z-I}. Assume that $Z \subset M$ and that $U$ is a limit by compact subgroups so that $r_{U,\psi}$ is exact. For any $\ell$-group $Q$, denote by $\delta_Q$ the modulus character of $Q$. 

The Frobenius reciprocity takes the following form.
\begin{lem}\label{lem::Frobenius}
	Let $V_1$ resp. $V_2$ be smooth representations of $G$ on which $Z$ acts via $\chi$ resp. $\chi^{-1}$. Let $W$ be a smooth representation of $M$ on which $Z$ acts via $\chi$
	We have, for all $q \geqslant 0$,
	\begin{align*}
		\Ext{q}_{\CM^{\,\chi}(G,Z)}( V_1,I_{U,\psi}(W)) &\cong \Ext{q}_{\CM^{\,\chi}(M,Z)} (r_{U,\psi} (V_1), W) \\
		\Ext{q}_{\CM^{\,\chi}(G,Z)}( i_{U,\psi} (W),V_2^{\vee}) &\cong \Ext{q}_{\CM^{\,\chi}(M,Z)} ( W, \delta_M\delta_G^{-1} (r_{U,\psi^{-1}} (V_2))^{\vee}),
	\end{align*}
	where the superscript ${}^{\vee}$ stands for the smooth dual.
\end{lem}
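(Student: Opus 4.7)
Both isomorphisms have the shape of a Frobenius reciprocity that, once known at the level of $\Hom$ (i.e.\ $q=0$), extends to higher Ext by a standard derivation argument, provided one of the functors in the adjoint pair preserves injectives or projectives. The restriction to the categories $\CM^{\,\chi}(G,Z)$ and $\CM^{\,\chi}(M,Z)$ is harmless because $Z\subset M$ is central and acts via the prescribed character on every object under consideration; the functors $i_{U,\psi}$, $r_{U,\psi}$, $I_{U,\psi}$ preserve the central character. So I will first establish the $q=0$ cases (referring to Bernstein--Zelevinsky), then derive.

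\textbf{First isomorphism.} At $q=0$ this is the usual right Frobenius reciprocity for the adjoint pair $(r_{U,\psi},\,I_{U,\psi})$ in \cite{B-Z-I}, with $r_{U,\psi}$ the left adjoint. Since $r_{U,\psi}$ is assumed exact, its right adjoint $I_{U,\psi}$ carries injectives to injectives. Choosing an injective resolution $W\hookrightarrow J^{\bullet}$ in $\CM^{\,\chi}(M,Z)$, the complex $I_{U,\psi}(J^{\bullet})$ is an injective resolution of $I_{U,\psi}(W)$ in $\CM^{\,\chi}(G,Z)$, and
\begin{align*}
\Ext{q}_{\CM^{\,\chi}(G,Z)}(V_1,I_{U,\psi}(W)) &= H^{q}\Hom_{G}(V_1,I_{U,\psi}(J^{\bullet}))\\
&\cong H^{q}\Hom_{M}(r_{U,\psi}(V_1),J^{\bullet}) = \Ext{q}_{\CM^{\,\chi}(M,Z)}(r_{U,\psi}(V_1),W).
\end{align*}

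\textbf{Second isomorphism.} Here I use the other adjoint pair $(i_{U,\psi},\,r_{U,\psi})$, with $i_{U,\psi}$ the left adjoint. Because $r_{U,\psi}$ is exact, $i_{U,\psi}$ preserves projectives; also $i_{U,\psi}$ is itself exact under the limit-by-compact hypothesis, so projective resolutions are carried to projective resolutions. Take a projective resolution $P^{\bullet}\twoheadrightarrow W$ in $\CM^{\,\chi}(M,Z)$. Then
\begin{align*}
\Ext{q}_{\CM^{\,\chi}(G,Z)}(i_{U,\psi}(W),V_2^{\vee}) &= H^{q}\Hom_{G}(i_{U,\psi}(P^{\bullet}),V_2^{\vee})\\
&\cong H^{q}\Hom_{M}(P^{\bullet},\,r_{U,\psi}(V_2^{\vee})).
\end{align*}
To match the stated right-hand side I invoke the Bernstein--Zelevinsky duality formula
\[
r_{U,\psi}(V_2^{\vee}) \;\cong\; \delta_M\delta_G^{-1}\,\bigl(r_{U,\psi^{-1}}(V_2)\bigr)^{\vee},
\]
which identifies the Jacquet module of the smooth contragredient with the contragredient of the Jacquet module along the opposite character, twisted by $\delta_M\delta_G^{-1}$. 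Substituting this gives the desired $\Ext{q}_{\CM^{\,\chi}(M,Z)}(W,\delta_M\delta_G^{-1}(r_{U,\psi^{-1}}(V_2))^{\vee})$.

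\textbf{Main obstacle.} The genuinely nontrivial input is the duality/second-adjointness identity for $r_{U,\psi}$ applied to a contragredient, including the precise modulus twist $\delta_M\delta_G^{-1}$. In the untwisted parabolic induction setting ($\psi$ trivial) this is classical and is proved by analyzing the pairing on $V_2 \otimes V_2^{\vee}$ restricted to the $U$-coinvariants using the limit-of-compact hypothesis on $U$; the twisted version with a character $\psi$ normalized by $M$ is proved in the same way, with $\psi$ turning into $\psi^{-1}$ because the natural pairing is conjugate-linear in the character. Everything else in the argument is formal homological algebra given that $r_{U,\psi}$ is exact and $i_{U,\psi}$ is exact.
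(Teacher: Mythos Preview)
Your proposal is correct and follows essentially the same route as the paper: the paper simply cites \cite[Proposition 2.2]{Prasad-Ext-BranchingLaw-ICM2018} (the general fact that an exact left adjoint upgrades a Hom-level adjunction to an Ext isomorphism) together with \cite[Proposition 1.9]{B-Z-I} (the Frobenius reciprocity and the duality identity $r_{U,\psi}(V^{\vee})\cong \delta_M\delta_G^{-1}(r_{U,\psi^{-1}}V)^{\vee}$), and you have unpacked precisely that argument. One small point worth tightening: for the second isomorphism you invoke the adjoint pair $(i_{U,\psi},r_{U,\psi})$ directly; in \cite{B-Z-I} the right adjoint of $i_{U,\psi}$ is stated with a modulus twist, so it is cleaner either to absorb that twist before applying the Jacquet--contragredient identity, or (equivalently) to cite both ingredients from \cite[Proposition 1.9]{B-Z-I} at once rather than separating the adjunction from the duality step.
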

\begin{proof}
	Both isomorphisms follow directly from \cite[Proposition 2.2]{Prasad-Ext-BranchingLaw-ICM2018} and \cite[Proposition 1.9]{B-Z-I}.
\end{proof}

If $U = \{e\}$, in this work, we will rather write $\ind_M^G$ and $\Ind_M^G$ for the ordinary induction $i_{U,\psi}$ and $I_{U,k\psi}$. The following result is an analogue of \cite[Proposition 2.6]{Prasad-Ext-BranchingLaw-ICM2018}. We thank Dipendra Prasad for providing us with a proof.
\begin{lem}\label{lem::tensor-hom-ext}
	Let $V_1$ resp. $V_2$ be smooth representations of $G$ on which $Z$ acts via $\chi$ resp. $\chi^{-1}$. Let $V$ be a smooth representation of $G$ on which $Z$ acts trivially. Then we have
	\begin{align*}
		\Ext{q}_{G/Z} (V_1 \otimes V_2, V^{\vee}) \cong \Ext{q}_{\CM^{\,\chi}(G,Z)} (V_1 \otimes V, V_2^{\vee}) \cong \Ext{q}_{\CM^{\,\chi^{-1}} (G,Z)} (V \otimes V_2, V_1^{\vee}) .
	\end{align*}
\end{lem}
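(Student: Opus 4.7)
The plan is to verify the lemma first at $q=0$ by identifying all three Hom groups with the space of $G$-invariant trilinear forms on $V_1\times V_2\times V$, and then to propagate this identification to higher $\Ext$ groups by choosing a single projective resolution of $V_1$ and showing that tensoring it with $V_2$ or with $V$ gives a projective resolution of $V_1\otimes V_2$ (resp.\ $V_1\otimes V$) in the appropriate category.

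For $q=0$, I would invoke the standard smooth tensor--hom adjunction $\Hom_G(A\otimes B,C)\cong \Hom_G(A,\Hom_{\BC}(B,C)^{\infty})$ for smooth $G$-modules $A,B,C$. Under it each of the three groups in the statement corresponds to the space of smooth $G$-invariant trilinear forms on $V_1\times V_2\times V$. A brief bookkeeping of central characters, using that $\Hom_{\BC}(V_2,W)^{\infty}$ has central character $\chi$ when $V_2$ has $\chi^{-1}$ and $W$ is trivial on $Z$, confirms that the three ambient categories $\CM(G/Z)$, $\CM^{\,\chi}(G,Z)$, and $\CM^{\,\chi^{-1}}(G,Z)$ are each the natural one for the corresponding side of the isomorphism.

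For general $q$, pick a projective resolution $P_\bullet \to V_1$ in $\CM^{\,\chi}(G,Z)$. Then $P_\bullet\otimes V_2\to V_1\otimes V_2$ and $P_\bullet\otimes V\to V_1\otimes V$ are exact, since tensor product over $\BC$ is exact in each variable; the real content is that the terms remain projective, namely that $P_i\otimes V_2$ is projective in $\CM(G/Z)$ and $P_i\otimes V$ is projective in $\CM^{\,\chi}(G,Z)$. Applying the tensor--hom adjunction again, this reduces to showing that for a fixed smooth $G$-module $U$, the functor $W\mapsto \Hom_{\BC}(U,W)^{\infty}$ is exact on smooth $G$-modules. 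I would prove this by decomposing $U$ under a compact open subgroup $K$ into finite-dimensional $K$-isotypic pieces, using the semisimplicity of smooth $\BC$-representations of profinite groups to split short exact sequences on each piece, and then passing to the filtered colimit over $K$, which preserves exactness.

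Once projectivity is established, both the first and the middle $\Ext$ groups are computed by the cohomology of the complexes $\Hom(P_\bullet\otimes V_2, V^{\vee})$ and $\Hom(P_\bullet\otimes V, V_2^{\vee})$, and the $q=0$ identification, being functorial in $P_i$, supplies a canonical isomorphism of these complexes term by term. The third isomorphism follows by the symmetric argument starting from a projective resolution of $V_2$ in $\CM^{\,\chi^{-1}}(G,Z)$. I expect the main obstacle to be the exactness of $\Hom_{\BC}(U,-)^{\infty}$, i.e.\ the stability of projectivity under tensoring by a smooth representation; once that is in place, everything else is a formal manipulation of adjunctions together with the tracking of central characters.
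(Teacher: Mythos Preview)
Your proposal is correct and follows essentially the same approach as the paper. The paper packages the argument by invoking Prasad's adjoint-functor criterion (exact left adjoint $\bullet\otimes V_1$, exact right adjoint $\Hom_{\BC}(V_1,\bullet)^{\infty}$), while you unpack this by hand via a projective resolution of $V_1$; the crucial technical input in both cases is precisely the exactness of $W\mapsto \Hom_{\BC}(U,W)^{\infty}$, established via $K$-semisimplicity of smooth representations.
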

\begin{proof}
	By \cite[Proposition 2.2]{Prasad-Ext-BranchingLaw-ICM2018}, we have
	\begin{align*}
		\Ext{q}_{G/Z} (V_1 \otimes V_2, V^{\vee}) \cong \Ext{q}_{G/Z} (V, (V_1 \otimes V_2)^{\vee}). 
	\end{align*}
	We have a canonical isomorphism 
	\begin{align*}
		(V_1 \otimes V_2)^{\vee} \cong \Hom_{\BC} (V_1,V_2^{\vee})^{\infty},
	\end{align*}
	where the superscript ${}^{\infty}$ stands for the subspace of smooth vectors in a representation. Thus it suffices for us to show that
	\begin{align}\label{formula::tensor-hom-ext}
		\Ext{q}_{G/Z} (V, \Hom_{\BC} (V_1,V_2^{\vee})^{\infty})  \cong \Ext{q}_{\CM^{\,\chi}(G,Z)} (V_1 \otimes V, V_2^{\vee}).
	\end{align}
	Note that $\bullet \mapsto \bullet \otimes V_1$ gives an exact functor from $\CM^{\mathbf{1}} (G,Z)$ to $\CM^{\,\chi} (G,Z)$. Note also that the functor $\bullet \mapsto \Hom_{\BC} (V_1, \bullet)^{\infty}$ is exact from $\CM^{\,\chi} (G,Z)$ to $\CM^{\mathbf{1}} (G,Z)$. This follows from the fact that, for any compact open subgroup $K$ of $G$, every smooth representation of $G$ is $K$-semisimple. The required isomorphism \eqref{formula::tensor-hom-ext} follows directly from \cite[Proposition 2.2]{Prasad-Ext-BranchingLaw-ICM2018}.
\end{proof}

We will need the following vanishing result for our computations.
\begin{lem}\label{lem::vanishing}
	Let $V$ and $W$ be smooth representations of $G$. Suppose that there exist an element $z \in Z(G)$ in the center of $G$ and two complex numbers $c_1$ and $c_2 $ with $c_1 \neq c_2$ such that $z \cdot v = c_1 v$ for all $v \in V$ and $z \cdot w = c_2 w$ for all $w \in W$. Then we have
	\begin{align*}
		\Ext{\sharp}_G (V,W) = 0.
	\end{align*}
\end{lem}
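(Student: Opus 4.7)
The plan is to exploit the central element $z$ to get two different scalar actions on $\Ext{n}_G(V,W)$ that must coincide, forcing the group to vanish when $c_1 \neq c_2$.

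First, I would observe that since $z \in Z(G)$, the map ``multiplication by $z$'' is $G$-equivariant on any smooth representation of $G$, hence defines a natural transformation of the identity functor on $\CM(G)$. Applied to $V$, this morphism is scalar multiplication by $c_1$; applied to $W$, it is scalar multiplication by $c_2$. The key claim is that both of the induced endomorphisms of $\Ext{n}_G(V,W)$ coming from functoriality (one via $V$, one via $W$) agree.

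To see this, I would fix a projective resolution $P_\bullet \twoheadrightarrow V$ in $\CM(G)$. The endomorphism $z \colon V \to V$ lifts to a chain map $\widetilde{z} \colon P_\bullet \to P_\bullet$, unique up to chain homotopy. Applying $\Hom_G(-,W)$ and passing to cohomology, $\widetilde{z}$ induces the endomorphism of $\Ext{n}_G(V,W)$ associated to $z$ acting on $V$, which is multiplication by $c_1$. On the other hand, since $z$ is central, one can take $\widetilde{z}$ to be literally multiplication by $z$ on each $P_i$; the induced map on $\Hom_G(P_\bullet, W)$ is then given by post-composition with the action of $z$ on $W$, i.e.\ multiplication by $c_2$. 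Hence as operators on $\Ext{n}_G(V,W)$ we have $c_1 = c_2$, and since $c_1 \neq c_2$ by assumption, $\Ext{n}_G(V,W) = 0$ for all $n$.

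There is no real obstacle; the only subtlety is verifying that multiplication by $z$ on a projective resolution computes both induced actions. I would simply write out the two chain-level descriptions and note that they are literally the same chain map, so they agree on cohomology. This argument is completely formal and uses nothing beyond that $\CM(G)$ has enough projectives (already noted) and that $z$ is central.
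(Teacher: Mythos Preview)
Your argument is correct. The paper does not spell out a proof either; it simply points to \cite[Theorem~4.1]{Borel-Wallach} and \cite[Lemma~13]{Orlik-Extensions-Steinberg-JoA}, where the argument is phrased in terms of the Yoneda interpretation of $\Ext{n}$ as equivalence classes of $n$-fold extensions: given an extension $0 \to W \to E_n \to \cdots \to E_1 \to V \to 0$, the central element $z$ acts on each $E_i$, and one uses this to produce an explicit equivalence between the given class and the zero class (for $n=1$, the map $(z - c_1)/(c_2 - c_1)\colon E_1 \to W$ is a retraction). Your derived-functor formulation is the same idea packaged categorically: the action of $z$ on the identity functor induces a single endomorphism of $\Ext{n}_G(V,W)$, computed as $c_1$ from the $V$-side and $c_2$ from the $W$-side. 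Either route is entirely standard; yours is arguably cleaner since it avoids any case distinction on $n$ and needs only the existence of enough projectives, while the Yoneda version has the virtue of being completely explicit.
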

\begin{proof}
	The argument is essentially the same as that in the proof of \cite[Theorem 4.1]{Borel-Wallach}, which uses the intepretation of Ext-groups as exact sequences. So we omit the details (see also \cite[Lemma 13]{Orlik-Extensions-Steinberg-JoA}).
\end{proof}

We will also need a variant of the Hochschild-Serre spectral sequence. Recall first that the $i$-th homology $H_i(G,V)$ of a smooth $G$-module $V$ is defined to be the $i$-th left derived functor of the right exact functor of taking $G$-coinvariants:
\begin{align*}
	\CM (G) &\ra   \textup{Vec} \\
	V &\mapsto V_G.
\end{align*}
It is easily seen that 
\begin{align}\label{formula::dual-homology-ext}
	H_i (G,V)^{\ast}  \cong \Ext{i}_G (V, \BC),
\end{align}
for all smooth $G$-module $V$ and all $i \geqslant 0$. Here the superscript ${}^{\ast}$ stands for the dual vector space. The following result can be found in \cite[Lemma 5]{Orlik-Extensions-Steinberg-JoA} (see also \cite{Casselman-non-unitary-argument-homology}).
\begin{lem}\label{lem::HS-spectral}
	Let $N \subset G$ be a closed normal subgroup of $G$. If $V$ is a projective $G$-module, then $V_N$ is a projective $G/N$-module. Thus, for every pair of smooth $G$-modules $V$ and $W$ with $N$ acting trivially on $W$, we have a spectral sequence
	\begin{align*}
		E_2^{p,q} = \Ext{q}_{G/N} (H_p (N,V),W) \Rightarrow \Ext{p+q}_G (V,W).
	\end{align*}
\end{lem}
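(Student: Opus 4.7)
The crux is the first assertion; once that is in hand, the spectral sequence is a routine consequence of Grothendieck's composition-of-functors machinery. My plan is therefore to devote the bulk of the argument to the projectivity claim, and then indicate how the spectral sequence formally follows from a standard bicomplex manipulation.

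For the projectivity statement, I would argue via adjunction. Let $\mathrm{Inf}\colon \CM(G/N) \to \CM(G)$ denote the inflation functor along the quotient $G \twoheadrightarrow G/N$; since it leaves the underlying vector space and its linear structure unchanged, it is manifestly exact. Because $N$ acts trivially on any inflated module, every $G$-equivariant map $V \to \mathrm{Inf}(M)$ kills $\{nv - v : n \in N,\, v \in V\}$ and so factors uniquely through $V_N$, giving a natural isomorphism
\begin{equation*}
\Hom_G(V, \mathrm{Inf}(M)) \cong \Hom_{G/N}(V_N, M).
\end{equation*}
This exhibits $(-)_N$ as a left adjoint of the exact functor $\mathrm{Inf}$, and the standard homological algebra fact that such a left adjoint preserves projectives settles the first claim.

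To build the spectral sequence, I would choose a projective resolution $P_\bullet \to V$ in $\CM(G)$ together with an injective resolution $W \to I^\bullet$ in $\CM(G/N)$. By the projectivity statement $(P_\bullet)_N$ is a complex of projectives in $\CM(G/N)$ whose $p$-th homology is, by definition of the left derived functors of $(-)_N$, precisely $H_p(N,V)$. Form the bicomplex $K^{p,q} = \Hom_{G/N}((P_p)_N, I^q)$ and use the chain-level adjunction $\Hom_G(P_\bullet, W) \cong \Hom_{G/N}((P_\bullet)_N, W)$ (valid since $W = \mathrm{Inf}(W)$). One of the two spectral sequences of $K$ collapses on the $E_2$-page, because $\Hom_{G/N}((P_p)_N,-)$ is exact, and computes the total cohomology as $\Ext{n}_G(V,W)$; the other yields the $E_2$-page $E_2^{p,q} = \Ext{q}_{G/N}(H_p(N,V),W)$ exactly as asserted. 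The only genuinely non-formal input is the preservation of projectivity, and the adjunction argument reduces it to a one-line observation, so I do not anticipate a serious obstacle; the presentation is essentially that of \cite{Orlik-Extensions-Steinberg-JoA}.
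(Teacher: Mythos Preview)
Your argument is correct and is the standard one: the adjunction between $N$-coinvariants and inflation immediately yields the projectivity claim, and the bicomplex (equivalently, Grothendieck composition-of-functors) argument produces the spectral sequence. The paper itself does not supply a proof but merely cites \cite[Lemma 5]{Orlik-Extensions-Steinberg-JoA}, which is exactly the source you invoke; the only point the paper adds, in the paragraph following the lemma, is the observation that restriction from $G$ to $N$ preserves projectives so that the homology of $(P_\bullet)_N$ really is the $N$-group homology $H_p(N,V)$ with its canonical $G/N$-module structure---you implicitly use this when you write ``by definition of the left derived functors of $(-)_N$,'' and it would do no harm to make that identification explicit.
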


Note that the homology group $H_p(N,V)$, a priori a vector space over $\BC$, can be viewed canonically as a smooth $G/N$-module. In fact, take a projective resolution $P_{\bullet}$ of $V$ in $\CM(G)$. The restriction of $P_{\bullet}$ to $N$ are still projective in $\CM(N)$. The $N$-coinvariants of $P_{\bullet}$ are smooth as $G/N$-modules, so as the homology groups.

\begin{lem}\label{lem::homology-G1G2}
	Let $V_1$ and $V_2$ be smooth modules of $G_1$ and $G_2$ respectively. Then we have a canonical isomorphism of smooth $G_2$-modules
	\begin{align*}
		H_p (G_1, V_1 \boxtimes V_2 )  \cong H_p (G_1,V_1) \otimes V_2,
	\end{align*}
	where $G_2$ acts trivially on $H_p(G_1,V_1)$.
\end{lem}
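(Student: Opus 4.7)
I would compute $H_p(G_1, V_1 \boxtimes V_2)$ directly from a projective resolution built out of one for $V_1$ alone. Let $P_\bullet \to V_1$ be a projective resolution in $\CM(G_1)$. I would then form the complex $P_\bullet \otimes_{\BC} V_2$, letting $G_1$ act on the first factor and $G_2$ act on the second. Since $V_2$ is a $\BC$-vector space, hence flat, the functor $-\otimes_{\BC} V_2$ is exact, so $P_\bullet \otimes V_2 \to V_1 \otimes V_2 = V_1 \boxtimes V_2$ remains an exact resolution, and smoothness as a $G_1 \times G_2$-module is inherited termwise.

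\textbf{Key step.} The main point is to show that each $P_i \otimes V_2$ is projective in $\CM(G_1)$ when one forgets the $G_2$-action. Choosing a $\BC$-basis $\{e_j\}_{j\in J}$ of $V_2$ gives a $G_1$-equivariant isomorphism $P_i \otimes V_2 \cong \bigoplus_{j \in J} P_i$. Since $\CM(G_1)$ admits arbitrary direct sums and these are exact, a direct sum of projectives is projective, so $P_\bullet \otimes V_2$ is a genuine projective resolution of $V_1 \boxtimes V_2$ in $\CM(G_1)$. Consequently the $G_1$-homology can be computed from this resolution.

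\textbf{Conclusion.} Since $G_1$ acts trivially on $V_2$, the canonical identity $(P_i \otimes V_2)_{G_1} = (P_i)_{G_1} \otimes V_2$ holds as smooth $G_2$-modules. Flatness of $V_2$ over $\BC$ then lets homology commute with the tensor, producing the $G_2$-equivariant isomorphism
\[
H_p(G_1, V_1 \boxtimes V_2) = H_p\bigl((P_\bullet)_{G_1} \otimes V_2\bigr) = H_p(G_1, V_1) \otimes V_2,
\]
with $G_2$ acting only on the second factor. To verify that this identification agrees with the intrinsic $G_2$-action on the left-hand side, I would invoke the standard functoriality argument: $G_2$ acts on the resolution $P_\bullet \otimes V_2$ by $G_1$-equivariant chain automorphisms, so its action on the homology is transported along the displayed isomorphism.

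The only mildly delicate point is the projectivity of $P_i \otimes V_2$ in $\CM(G_1)$; once the basis trick reduces this to a direct sum of copies of $P_i$, everything else is a formal unwinding of definitions together with the flatness of $\BC$-vector spaces.
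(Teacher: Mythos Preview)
Your argument is correct and follows essentially the same route as the paper: take a projective resolution $P_\bullet$ of $V_1$ in $\CM(G_1)$, observe that $P_i \otimes V_2$ is projective because it is a direct sum of copies of $P_i$, and conclude via the canonical isomorphism $(P_i \otimes V_2)_{G_1} \cong (P_i)_{G_1} \otimes V_2$. Your write-up is simply more explicit about the flatness, the $G_2$-action, and the basis trick, all of which the paper compresses into a couple of sentences.
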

\begin{proof}
	Let $P_{\bullet}$ be a projective resolution of $V_1$ in $\CM(G)$. We may assume that $G_1$ acts trivially on $V_2$ so that we write $V_1 \otimes V_2$ for $V_1 \boxtimes V_2$. Note that $P_i \otimes V_2$ is projective in $\CM(G)$ since it is a direct sum of $P_i$. Thus $P_{\bullet} \otimes V_2 $ is a projective resolution of $V_1 \otimes V_2$ in $\CM(G)$. The lemma follows easily from the canonical isomorphism $P_N \otimes V_2 \cong (P \otimes V_2)_N$. 
\end{proof}

For applications of the Hochschild-Serre spectral sequence in the sequel, the closed normal subgroup $N$ is often taken to be a subgroup of the central torus of $G$. Assume that $N \cong  (F^{\times})^n $ and that $V$ is a representation of $G$ on which $N$ acts trivially. By Lemma \ref{lem::homology-G1G2}, we have an isomorphism of smooth $G$-modules
\begin{align*}
	H_p((F^{\times})^n, V) \cong H_p ((F^{\times})^n, \BC)) \otimes V.
\end{align*}
We have a simple description of $H_p ((F^{\times})^n ,\BC)$ from the dual side in view of \eqref{formula::dual-homology-ext}. The following is a specialization of \cite[Corollary 2]{Orlik-Extensions-Steinberg-JoA}.
\begin{lem}\label{lem::Ext-torus}
	We have
	\begin{align*}
		\Ext{p}_{(F^{\times})^n} (\BC,\BC)  = 
		\begin{cases}
			\BC^{n \choose p},  & p = 0,1,\cdots,n,  \\
			0, &\text{otherwise.}
		\end{cases}
	\end{align*}
\end{lem}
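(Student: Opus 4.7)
The plan is to reduce the statement to a computation over $\BZ^n$ by killing the maximal compact subgroup, and then to handle $\BZ^n$ either by iterating a one-variable computation or by writing down an explicit Koszul resolution.

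First, let $K = (\mathcal{O}_F^\times)^n \subset (F^\times)^n$ be the unique maximal compact subgroup, a closed normal subgroup with quotient isomorphic to $\BZ^n$. Since $K$ is compact, every smooth $K$-module is semisimple, so $\BC$ is both projective and injective in $\CM(K)$ and consequently $H_p(K,\BC) = 0$ for $p > 0$ and $H_0(K,\BC) = \BC$. Feeding this into the Hochschild–Serre spectral sequence of Lemma \ref{lem::HS-spectral},
\begin{equation*}
E_2^{p,q} = \Ext{q}_{\BZ^n}\bigl( H_p(K,\BC), \BC\bigr) \Rightarrow \Ext{p+q}_{(F^\times)^n}(\BC,\BC),
\end{equation*}
the $p > 0$ rows vanish and the spectral sequence degenerates at $E_2$, yielding
\begin{equation*}
\Ext{p}_{(F^\times)^n}(\BC,\BC) \cong \Ext{p}_{\BZ^n}(\BC,\BC).
\end{equation*}

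Next, to compute $\Ext{p}_{\BZ^n}(\BC,\BC)$, I would exhibit an explicit projective resolution of $\BC$ as a module over $\BC[\BZ^n] = \BC[t_1^{\pm 1}, \ldots, t_n^{\pm 1}]$. The augmentation ideal is generated by the regular sequence $(t_1 - 1, \ldots, t_n - 1)$, so the Koszul complex $K_\bullet$ on these elements, with $K_p = \BC[\BZ^n] \otimes \wedge^p \BC^n$, is a finite free resolution of $\BC$ of length exactly $n$. Applying $\Hom_{\BZ^n}(-,\BC)$ identifies each $\Hom_{\BZ^n}(K_p,\BC)$ with $\wedge^p \BC^n$, and every differential becomes zero because the generators $t_i - 1$ act trivially on $\BC$. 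Therefore $\Ext{p}_{\BZ^n}(\BC,\BC) \cong \wedge^p \BC^n$, which has dimension $\binom{n}{p}$ for $0 \le p \le n$ and vanishes otherwise.

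A more spectral-sequence-flavored alternative, in case one prefers to avoid the Koszul complex, is to argue by induction on $n$. The base case $n=1$ follows from the length-one free resolution $0 \to \BC[\BZ] \xrightarrow{t-1} \BC[\BZ] \to \BC \to 0$. For the inductive step one applies Hochschild–Serre to the split exact sequence $1 \to \BZ \to \BZ^n \to \BZ^{n-1} \to 1$ and combines it with Lemma \ref{lem::homology-G1G2}, giving a two-row $E_2$-page whose only differentials go between rows $q = 0$ and $q = 1$. The main obstacle in this approach is verifying that these $d_2$ differentials vanish; the cleanest justification is that the extension is split and central, so one can compare the resulting filtration with the tensor product filtration coming from $\BC[\BZ^n] = \BC[\BZ] \otimes \BC[\BZ^{n-1}]$. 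Pascal's identity $\binom{n-1}{p-1} + \binom{n-1}{p} = \binom{n}{p}$ then closes the induction. For expository brevity I would ultimately prefer the Koszul argument in the previous paragraph, since it avoids any discussion of differentials.
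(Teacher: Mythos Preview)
Your argument is correct. The reduction from $(F^\times)^n$ to $\BZ^n$ via the compact subgroup $(\mathfrak{o}_F^\times)^n$ is clean: compactness makes $\BC$ projective in $\CM(K)$, so the Hochschild--Serre spectral sequence of Lemma~\ref{lem::HS-spectral} degenerates exactly as you say. The Koszul computation over $\BC[\BZ^n]=\BC[t_1^{\pm1},\ldots,t_n^{\pm1}]$ is standard and the vanishing of the differentials after applying $\Hom(-,\BC)$ is immediate since each $t_i-1$ kills $\BC$. Your alternative inductive route also works; the $d_2$ differentials really do vanish because the extension splits, but as you note the Koszul argument avoids having to say even that.

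As for comparison with the paper: there is essentially nothing to compare. The paper does not prove this lemma at all; it simply records it as a specialization of \cite[Corollary~2]{Orlik-Extensions-Steinberg-JoA}. Your write-up therefore supplies a self-contained proof where the paper only gives a citation. If anything, your first paragraph (the Hochschild--Serre reduction to $\BZ^n$) is close in spirit to how Orlik himself argues, while your explicit Koszul resolution is a pleasant and more algebraic alternative to the inductive bookkeeping. One cosmetic point: the paper writes the ring of integers as $\mathfrak{o}_F$ rather than $\mathcal{O}_F$, so you may wish to match that notation.
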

For $p = 1$, we would rather like to provide the following simple computation for the first extension groups. 
\begin{lem}\label{lem::ext-1-torus}
	Let $V$ and $W$ be smooth $(F^{\times})^n$-modules on which $(F^{\times})^n$ acts trivially. Then we have a canonical isomorphism of vector spaces
	\begin{align*} 
		\Ext{1}_{(F^{\times})^n} (V,W) \cong \Hom_{\BC}^n (V,W).
	\end{align*} 
\end{lem}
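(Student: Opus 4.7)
The plan is to identify $\Ext{1}_{(F^{\times})^n}(V,W)$ with the equivalence classes of short exact sequences $0 \to W \to E \to V \to 0$ in $\CM((F^{\times})^n)$ and to parametrize these via cocycles. Since any such sequence splits as $\BC$-vector spaces, I fix a linear section and identify $E = V \oplus W$; the group action then has the form $t \cdot (v, w) = (v, c(t)v + w)$ for a uniquely determined map $c : (F^{\times})^n \to \Hom_{\BC}(V, W)$. Because the action on both $V$ and $W$ is trivial, the cocycle condition degenerates to $c(ts) = c(t) + c(s)$, so $c$ is just a group homomorphism, and the coboundary of any $f \in \Hom_{\BC}(V,W)$ vanishes identically. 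Consequently $\Ext{1}_{(F^{\times})^n}(V,W)$ will be in bijection with the set of group homomorphisms $c : (F^{\times})^n \to \Hom_{\BC}(V,W)$ for which the resulting $E$ is smooth.

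Next I will translate the smoothness condition on $E$: it is equivalent to requiring, for every $v \in V$, that the orbit map $t \mapsto c(t)v$ have open kernel in $(F^{\times})^n$. Restricting to the maximal compact subgroup $(\CO^{\times})^n$, this becomes a group homomorphism from a compact group to the torsion-free additive group of $W$ with open (hence finite index) kernel, so it must vanish identically. Hence each smooth cocycle $c$ factors through $(F^{\times})^n / (\CO^{\times})^n \cong \BZ^n$ via the valuation. A homomorphism $\BZ^n \to \Hom_{\BC}(V, W)$ is freely determined by the images of the $n$ standard generators, which will identify the set of smooth cocycles canonically with $\Hom_{\BC}(V,W)^{\oplus n}$. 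Conversely, any such $n$-tuple produces a cocycle whose kernel contains $(\CO^{\times})^n$ and therefore yields a smooth extension.

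I expect the main (and essentially only) obstacle to be carrying out the smoothness-to-vanishing implication on $(\CO^{\times})^n$ cleanly; it is not deep, but rests on the elementary observation that a group homomorphism from a compact totally disconnected group to a complex vector space with open kernel must be trivial. Once this is in place, all remaining verifications --- functoriality of the bijection in $V$ and $W$, and compatibility with the Baer sum on $\Ext{1}$ turning into addition on $\Hom_{\BC}(V,W)^{\oplus n}$ --- are formal. The notation $\Hom_{\BC}^n(V,W)$ in the statement should then be read as $\Hom_{\BC}(V,W)^{\oplus n}$, which is consistent with the $p = 1$ case of Lemma \ref{lem::Ext-torus} specialized to $V = W = \BC$.
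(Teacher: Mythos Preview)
Your proposal is correct and follows essentially the same approach as the paper's proof: both interpret $\Ext{1}$ via short exact sequences, split them as vector spaces to obtain a cocycle which, because the actions on $V$ and $W$ are trivial, is a genuine group homomorphism $(F^{\times})^n \to \Hom_{\BC}(V,W)$, and then use smoothness to force this homomorphism to vanish on the compact subgroup $(\mathfrak{o}_F^{\times})^n$ and hence factor through the valuation map to $\BZ^n$. The only difference is cosmetic: you phrase the argument in cocycle/coboundary language, whereas the paper writes out the map $\mu'(\underline{z},v)=\mu(\underline{z})v-v$ concretely and observes directly that its image on $(\mathfrak{o}_F^{\times})^n$ is a finite subgroup of the vector space $W$ and hence zero.
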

\begin{proof}
	We use the interpretation of $\Ext{1}_{(F^{\times})^n} (V,W)$ in terms of exact sequences. There is a canonical isomorphism between the abelian group $\Ext{1}_{(F^{\times})^n} (V,W)$ and the abelian group of equivalent classes of short exact sequences
	\begin{align*}
		0 \ra W \ra E \ra V \ra 0,
	\end{align*}
	where the addition on the latter group is given by Baer sum (see \cite[Chapter III]{Maclane-Homology} for more details). Note that every extension $E$ of $V$ by $W$ is isomorphic to $W \oplus V$ as a vector space over $\BC$, so we need only to specify the $(F^{\times})^n$ action on $W \oplus V$. Given $L = (\ell_1,\cdots,\ell_n) \in \Hom_{\BC}^n (V,W)$, we may construct an extension $(W \oplus V,\mu_L)$ of $V$ by $W$ as follows:
	\begin{align*}
		0 \ra W \ra W \oplus V \ra V \ra 0,
	\end{align*}
	where 
	\begin{align*}
		\mu_L ((z_1,\cdots, z_n)) ((w,v))  =  (w + \sum \nu_F (z_i)\ell_i (v), v). 
	\end{align*}
	It is easy to check that the map $L \mapsto (W \oplus V,\mu_L)$ is an injective homomorphism of abelian groups. Now let $(W \oplus V,\mu)$ be any extension of $V$ by $W$, we show that $\mu = \mu_L$ for some $L \in \Hom_{\BC}^n(V,W)$. Since the action of $(F^{\times})^n$ on $V$ is trivial, to $\mu$ we can associate a map $\mu'$ defined by
	\begin{align*}
		\mu' :   (F^{\times})^n \times V &\ra W   \\
		\  (\underline{z},v) &\mapsto \mu(\underline{z}) v - v.
	\end{align*}
	Note that for any $v$, we have $\mu'(\underline{z}_1 \underline{z}_2, v) = \mu'(\underline{z}_1,v) + \mu'(\underline{z}_2,v)$ as $(F^{\times})^n$ acts trivially on $W$. By the smoothness of $\mu$, the image $\mu'((\mathfrak{o}_F^{\times})^n,v)$ in $W$ is a abelian subgroup of $W$ with finitely many values, hence must be $0$. Thus we can define $\ell_i \in \Hom_{\BC}(V,W)$ by
	\begin{align*}
		\ell_i (v) = \mu'((1,\cdots,1,\varpi_F,1,\cdots,1) ,v)
	\end{align*}
	for $i =1,\cdots,n$, where $\varpi_F$ is at the $i$th place. Let $L = (\ell_1,\cdots,  \ell_n)$. It is easy to verify that $\mu = \mu_L$.
\end{proof}

For two smooth representations $V$ and $W$ of $G$ on which $Z$ acts through $\chi$, we define the Euler-Poincar\'e paring
\begin{align*}
	\EP_{\CM^{\,\chi}(G,Z)} (V,W) = \sum_i (-1)^i \dim_{\BC} (\Ext{i}_{\CM^{\,\chi}(G,Z)} (V,W)),
\end{align*}
once it is well-defined, that is, $\Ext{i}_{\CM^{\,\chi}(G,Z)} (V,W)$ is finite dimensional for all $i \geqslant 0$. As before, we write simply $\EP_G$ for $\EP_{\CM(G)}$. We have the following vanishing result for $\EP$.

\begin{prop}\label{prop::EP-trivial}
	Let $ H \subset G$ be a closed subgroup of $G$ and  $\chi:H  \ra \BC$ be a  character of $H$. Let $V$ be a finite length smooth representation of $G$. If there is a subgroup $Z \subset Z(G) \cap H$ such that $Z \cong F^{\times}$, then
	\begin{align*}
		\EP_H (V,\chi) = 0,
	\end{align*}
	whenever it is well-defined.
\end{prop}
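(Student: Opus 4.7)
The plan is to decompose $V$ via the central $Z$-action, eliminate off-diagonal summands by Lemma \ref{lem::vanishing}, and treat the remaining summand using the Hochschild-Serre spectral sequence of Lemma \ref{lem::HS-spectral} together with the ``Euler characteristic zero'' property of $F^\times$ supplied by Lemma \ref{lem::Ext-torus}.

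Since $\varpi_F \in Z \cong F^\times$ acts on the finite length $G$-module $V$ as a $G$-intertwiner, it has only finitely many generalized eigenvalues and yields a $G$-module decomposition $V = \bigoplus_\lambda V_\lambda$. Because $\mathfrak{o}_F^\times \subset Z$ is compact totally disconnected, its smooth action on each $V_\lambda$ is semisimple, refining this to $V = \bigoplus_{\lambda,\mu} V_\lambda^\mu$ indexed by generalized $\varpi_F$-eigenvalues $\lambda$ and characters $\mu$ of $\mathfrak{o}_F^\times$. For any summand with $\lambda \neq \chi(\varpi_F)$, I would filter $V_\lambda^\mu$ by the images of $(\varpi_F-\lambda)^i$ and apply Lemma \ref{lem::vanishing} to each strict-eigenspace subquotient; for any summand with $\mu \neq \chi|_{\mathfrak{o}_F^\times}$, Lemma \ref{lem::vanishing} applies directly with a suitable $z \in \mathfrak{o}_F^\times$. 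In either case $\Ext{*}_H(V_\lambda^\mu,\chi) = 0$, so only the diagonal summand $V_0 := V_{\chi(\varpi_F)}^{\chi|_{\mathfrak{o}_F^\times}}$ can contribute to $\EP_H(V,\chi)$.

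Twisting by $\chi^{-1}$ as $H$-modules reduces the problem to showing $\EP_H(\tilde V,\mathbf{1}) = 0$ where $\tilde V := V_0 \otimes \chi^{-1}$ carries a trivial $\mathfrak{o}_F^\times$-action and a nilpotent $\varpi_F - 1$. I would then filter $\tilde V$ by the finite exhaustive chain $\tilde V^{(i)} := \ker((\varpi_F-1)^i)$ of $H$-submodules, so that the successive quotients $W_i$ are smooth $H$-modules on which the entire subgroup $Z$ acts trivially. For each $W_i$, the Hochschild-Serre spectral sequence of Lemma \ref{lem::HS-spectral} applied to $Z \triangleleft H$ reads
\[
E_2^{p,q} = \Ext{q}_{H/Z}(H_p(Z, W_i), \mathbf{1}) \;\Longrightarrow\; \Ext{p+q}_H(W_i, \mathbf{1}),
\]
and triviality of the $Z$-action combined with Lemma \ref{lem::Ext-torus} (dualized via \eqref{formula::dual-homology-ext}) gives $H_p(Z,W_i) \cong W_i$ for $p \in \{0,1\}$ and vanishes otherwise. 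The spectral sequence collapses at $E_2$, yielding
\[
\dim \Ext{n}_H(W_i,\mathbf{1}) = \dim\Ext{n}_{H/Z}(W_i,\mathbf{1}) + \dim\Ext{n-1}_{H/Z}(W_i,\mathbf{1}),
\]
whose alternating sum is $\EP_H(W_i,\mathbf{1}) = 0$. Additivity of $\EP$ along the short exact sequences of the filtration of $\tilde V$ then gives $\EP_H(\tilde V, \mathbf{1}) = 0$, concluding the proof.

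The main technical obstacle will be ensuring the well-definedness of all the intermediate Euler-Poincar\'e pairings, in particular the $\EP_H(W_i,\mathbf{1})$; this must be bootstrapped from the standing assumption that $\EP_H(V,\chi)$ is well-defined, using the long exact sequences of the filtrations above together with the $\Ext$-vanishing already obtained for the off-diagonal summands.
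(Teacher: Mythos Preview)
Your argument is correct in its core and uses the same engine as the paper: the Hochschild--Serre spectral sequence for $Z \trianglelefteq H$ together with the homology computation $H_0(Z,\cdot)\cong H_1(Z,\cdot)$, $H_{\geq 2}(Z,\cdot)=0$ from Lemma~\ref{lem::Ext-torus}, giving a two-column $E_2$-page whose alternating sum vanishes.

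The difference lies in the reduction step. The paper does not decompose by generalized $\varpi_F$-eigenvalues or filter by $\ker(\varpi_F-1)^i$; instead it uses additivity of $\EP$ along a composition series to pass directly to an \emph{irreducible} $V$, on which Schur's lemma forces $Z$ to act by a genuine character. Then either that character disagrees with $\chi|_Z$ (Lemma~\ref{lem::vanishing} kills all $\Ext$'s outright) or it agrees (twist and run the spectral sequence once). Your route recovers exactly this dichotomy but through more machinery, and the nilpotent filtration is unnecessary once one observes that on an irreducible piece the central action is already semisimple.

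On the well-definedness obstacle you flag: your proposed bootstrap from $\EP_H(V,\chi)$ to the subquotients $\EP_H(W_i,\mathbf 1)$ does not go through as stated, since long exact sequences do not propagate finite-dimensionality of $\Ext$ from the middle term to the outer two. The paper's proof has the same tacit assumption --- it needs $\EP$ to be well-defined on each composition factor --- and does not address it explicitly either; in the paper's intended applications this is supplied by the Aizenbud--Sayag finiteness theorem \cite{Aizenbud-Sayag-HomologicalMulti} for symmetric pairs. Swapping your reduction for the composition-series-plus-Schur argument makes the proof shorter and leaves you with no worse a gap than the paper itself.
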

\begin{proof}
	By the additivity of $\EP$, we may assume that $\pi$ is an irreducible representation of $G$ so that it has a central character $\chi_{\pi}$. By Lemma \ref{lem::vanishing}, if $\chi_{\pi} \neq \chi$ on $Z$, then $\EP_H(\pi,\chi) =0$. If $\chi_{\pi} = \chi$ on $Z$, then we may assume that $V$ is a smooth representation of $H$ on which $Z$ acts trivially and $\chi = \mathbf{1}$ is the trivial character. By the Hochschild-Serre spectral sequence in Lemma \ref{lem::HS-spectral}, we have
	\begin{align*}
		E_2^{p,q} = \Ext{q}_{H/Z} (H_p (Z,V),\BC) \Longrightarrow \Ext{p+q}_H (V,\BC).
	\end{align*}
	By Lemma \ref{lem::Ext-torus}, $H_0(Z,V) = H_1(Z,V) = V$ and $H_i(Z,V) = 0$ for $i \geqslant 2$. Thus the spectral sequence collapse at the second page and $E_2^{p,q}$ is finite dimensional for all $p,q$. The proposition then follows from the Euler-Poincar\'e principle, which asserts that 
	$$\EP_H(V,\BC) = \sum_{p,q} (-1)^{p+q} \dim_{\BC} (E_2^{p,q}).$$
\end{proof}
\begin{rem}
	This proposition is a generalization of \cite[Lemma 4.1]{Prasad-Ext-BranchingLaw-ICM2018}, which, by Lemma \ref{lem::tensor-hom-ext}, can be viewed as the group case with $G= \GL_n(F) \times \GL_n(F)$ and $H = \Delta \GL_n(F)$ diagonally embedded in $G$
\end{rem}

We record the Kunneth theorm from \cite[Theorem 3.1]{Prasad-Ext-BranchingLaw-ICM2018}.
\begin{thm}\label{thm::Kunneth}
	Let $G_1$ and $G_2$ be two $\ell$-groups. Let   $E_1,F_1$ be any two smooth representations of $G_1$, and $E_2,F_2$ be any two smooth representations of $G_2$. Then assuming that $G_1$ is a reductive $p$-adic group, and $E_1$ has finite length, we have
	\begin{align*}
		\Ext{q}_{G_1 \times G_2} (E_1 \boxtimes E_2, F_1 \boxtimes F_2) \cong \mathop{\oplus}_{q = i + j } \Ext{i}_{G_1} (E_1 ,F_1) \otimes \Ext{j}_{G_2} (E_2, F_2).
	\end{align*} 
\end{thm}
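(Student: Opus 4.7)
The plan is to prove this via a double complex construction, combining a suitable projective resolution of $E_1$ with one of $E_2$ to form a projective resolution of $E_1 \boxtimes E_2$ in $\CM(G_1 \times G_2)$, and then applying the algebraic Künneth formula over $\BC$.

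First I would fix a projective resolution $P_\bullet \to E_1$ in $\CM(G_1)$ in which each $P_i$ is a \emph{compact} (i.e., finitely generated) projective module. The hypothesis that $G_1$ is a reductive $p$-adic group together with $E_1$ being of finite length is essential here: by Bernstein's decomposition theorem, $\CM(G_1)$ decomposes into Noetherian blocks of finite cohomological dimension, and a finite length module admits a finite length resolution by finitely generated projectives. I would also fix an arbitrary projective resolution $Q_\bullet \to E_2$ in $\CM(G_2)$ (no finiteness is needed on this side).

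Next I would consider the external tensor product double complex $P_\bullet \boxtimes Q_\bullet$ and its associated total complex $T_\bullet := \mathrm{Tot}(P_\bullet \boxtimes Q_\bullet)$ in $\CM(G_1 \times G_2)$. Two things need checking. First, each $P_i \boxtimes Q_j$ is projective in $\CM(G_1 \times G_2)$: since projectives in $\CM(G)$ are summands of modules of the form $\cind_K^G \BC$ for compact open $K$, and since $\cind_{K_1}^{G_1}\BC \boxtimes \cind_{K_2}^{G_2}\BC \cong \cind_{K_1 \times K_2}^{G_1 \times G_2}\BC$, the claim follows. Second, $T_\bullet$ resolves $E_1 \boxtimes E_2$: since we work over the field $\BC$, tensoring an exact sequence of vector spaces with any vector space stays exact, so each row and column of $P_\bullet \boxtimes Q_\bullet$ is exact except in degree $0$, which by a standard spectral sequence argument on the double complex gives $H_0(T_\bullet) = E_1 \boxtimes E_2$ and $H_i(T_\bullet) = 0$ for $i > 0$.

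I would then compute $\Ext{\bullet}_{G_1 \times G_2}(E_1 \boxtimes E_2, F_1 \boxtimes F_2)$ as the cohomology of the total Hom complex $\Hom_{G_1 \times G_2}(T_\bullet, F_1 \boxtimes F_2)$. The crucial identification is
\[
\Hom_{G_1 \times G_2}(P_i \boxtimes Q_j, F_1 \boxtimes F_2) \cong \Hom_{G_1}(P_i, F_1) \otimes_{\BC} \Hom_{G_2}(Q_j, F_2),
\]
which holds precisely because $P_i$ is compact projective (the analogous statement fails in general without finite generation). Finally, because we are working over the field $\BC$, all $\mathrm{Tor}^\BC$ terms vanish, and the algebraic Künneth formula for the cohomology of a tensor product of bounded-below complexes of $\BC$-vector spaces gives
\[
H^q\bigl(\Hom_{G_1}(P_\bullet, F_1) \otimes \Hom_{G_2}(Q_\bullet, F_2)\bigr) \cong \bigoplus_{i+j = q} \Ext{i}_{G_1}(E_1, F_1) \otimes \Ext{j}_{G_2}(E_2, F_2),
\]
which is the desired formula.

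The main obstacle is step one, the existence of a projective resolution of $E_1$ by compact (finitely generated) projectives, since the Hom-tensor identity above genuinely requires it. This is where the two technical hypotheses ($G_1$ reductive $p$-adic and $E_1$ of finite length) enter, via Bernstein's structure theory of $\CM(G_1)$; the rest of the argument is a formal manipulation of double complexes over a field.
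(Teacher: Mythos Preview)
The paper does not supply its own proof of this theorem; it simply records the statement from \cite[Theorem 3.1]{Prasad-Ext-BranchingLaw-ICM2018}. Your outline is exactly the standard argument (and is essentially Prasad's), so at the level of strategy there is nothing to compare.

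That said, your ``crucial identification''
\[
\Hom_{G_1 \times G_2}(P_i \boxtimes Q_j,\, F_1 \boxtimes F_2) \;\cong\; \Hom_{G_1}(P_i, F_1) \otimes_{\BC} \Hom_{G_2}(Q_j, F_2)
\]
does not follow from compactness of $P_i$ alone. For $P_i = e_K \CH(G_1)$ the left-hand side unwinds to $\Hom_{G_2}\bigl(Q_j,\, F_1^K \otimes F_2\bigr)$, and what you need is $\Hom_{G_2}(Q_j, V \otimes F_2) \cong V \otimes \Hom_{G_2}(Q_j, F_2)$ with $V = F_1^K$; this holds when $V$ is finite-dimensional (i.e.\ $F_1$ admissible) or when $Q_j$ is itself compact, but not in general. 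Indeed the natural K\"unneth map can already fail to be surjective at $q=0$ under the stated hypotheses: take $G_1 = G_2 = F^\times$, $E_1 = F_2 = \BC$ and $E_2 = F_1 = \BC^{(\BN)}$ with trivial action; then the map sends $\BC^{(\BN)} \otimes (\BC^{(\BN)})^{\ast}$ onto the finite-rank endomorphisms of $\BC^{(\BN)}$, missing the identity. So your argument (and the stated theorem) needs an extra finiteness hypothesis, for instance $F_1$ admissible or $E_2$ finitely generated. This is harmless for the present paper, since every application of Theorem~\ref{thm::Kunneth} here involves only finite-length modules on both sides.
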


\section{The geometric lemma}\label{sec::gemeotric lemma}

In this section we apply the geometric lemma of Bernstein and Zelevinsky \cite{B-Z-I} to Ext-distinction problems. We will work in the setting of $p$-adic symmetric spaces, following the framework given by Offen in \cite{Offen-ParabolicInduction-JNT}. Let $ \mathbf{G}$ be a connected reductive group over $F$ and $\theta$ be an involution on $\mathbf{G}$ defined over $F$. Let $G = \mathbf{G}(F)$ be the group of $F$-rational points of $\mathbf{G}$ and $H = \mathbf{H}(F)$ with $\mathbf{H} = \mathbf{G}^{\theta}$ the fixed point group of $\theta$ in $\mathbf{G}$. Let $ A_{G,H}$ be the maximal $F$-split torus in $ H \cap Z_G$ where $Z_G$ is the center of $G$. 

To the symmetric pair $(G,\theta)$ we associate the symmetric space
\begin{align*}
	X = \{ g \in G \mid \theta(g) = g^{-1}  \}
\end{align*}
equipped with the twisted $G$-action given by $g \cdot x = gx\theta(g)^{-1}$, $g \in G$, $x \in X $. For $x \in X$ we denote by $\theta_x$ the involution on $G$ given by 
\begin{align*}
	\theta_x (g) = x \theta(g) x^{-1}.
\end{align*}
For any subgroup $Q$ of $G$ and $x \in X$, let $Q_x$ be the stabilizer of $x$ in $Q$. 

 Fix a minimal parabolic subgroup $P_0$ of $G$ and a $\theta$-stable maximal split torus $T$ of $G$ contained in $P_0$, whose existence is proved in \cite[Lemma 2.4]{Helminck-Wang-Involution}. We assume for simplicity that $\theta (P_0) = P_0$. We call a parabolic subgroup (resp. Levi subgroup) of $G$ standard if it contains $P_0$ (resp. $T$).

  Let $P = MU$ be the Levi decomposition of a standard parabolic subgroup of $G$ and $\sigma$ be a smooth representation of $M$. Let $\pi = I_P^G (\sigma)$ be the normalized parabolic induction as defined in \cite[Section 2.3]{B-Z-I}. By \cite[Theorem 5.2]{B-Z-I} or \cite[Proposition 1.17]{Blanc-Delorme}, there is a filtration of the restriction to $H$ of $\pi$ such that the successive quotients are indexed by the $P$-orbits in $G \cdot  e \subset X$. For a $P$-orbit $\CO$, we denote by $\pi_{\CO}$ the composition factor corresponding to $\CO$. We then have
\begin{align}\label{formula::composition factor}
	\pi_{\CO} \cong \ind_{H^P_{\eta}}^H ((\sigma \delta_P^{1/2}|_{P_x})^{\eta}),
\end{align}
where $x$ is a representative in $\CO$, $x = \eta \cdot e$ for some $\eta \in G$ and  
\begin{align*}
	H^P_{\eta} = \eta^{-1} P \eta \cap H = \eta^{-1} P_x \eta.
\end{align*}
Here $(\sigma \delta_P^{1/2}|_{P_x})^{\eta} $ is the representation of $H^P_{\eta}$ obtained from $\sigma \delta_P^{1/2}|_{P_x}$ by conjugation by $\eta$.

Let $W$ be the Weyl group of $G$ with respect to $T$. As $\theta$ stabilize $T$, $\theta$ also acts as an involution on $W$. Let
\begin{align*}
	\mathfrak{I}_0(\theta) = \{ w \in W \colon w \theta(w) = e \}
\end{align*}
be the set of twisted involutions in $W$. Following \cite[Section 3]{Offen-ParabolicInduction-JNT} we can fiber the $P$-orbits in $X$ over certain twisted involutions. For a standard Levi subgroup $M$, let $W_M$ be the Weyl group of $M$ with respect to $T$. For two standard Levi subgroups $M$ and $M'$, let ${}_MW_{M'}$ be the set of all $w \in W$ that are left $W_M$-reduced and right $W_{M'}$-reduced. The fibration map 
\[
\iota_M: P\backslash X \rightarrow  ({}_MW_{\theta(M)} \cap \mathfrak{I}_0(\theta))
\]
is characterized by the identity
\begin{align*}
	P x \theta(P) = P\iota_M(P \cdot x) \theta(P).
\end{align*}
For a $P$-orbit $\CO$ in $X$, let $w = \iota_M (\CO)$. We have that $L = M \cap w \theta(M) w^{-1}$ is a standard Levi subgroup of $M$. 
\begin{defn}
	We say that $x \in X$ or the $P$-orbit $P \cdot x$ is $M$-\emph{admissible} if $ M =  w \theta(M) w^{-1}$ where $w  = \iota_M (P \cdot x )$.
\end{defn}
We make the following definition following the terminology in \cite{Mitra-Offen-U2n-Sp2n}. Let $\CO$ be a $P$-orbit in $X$.
\begin{defn}
	We say that $x \in \CO$ is a good representative if $ x \in Lw$ where $ w = \iota_M (\CO)$.
\end{defn}
By \cite[Lemma 3.2]{Offen-ParabolicInduction-JNT}, for every $P$-orbit, good representatives always exist. Assume that $x \in Lw$ is a good representative for the $P$-orbit $\CO$ and that $w = \iota_M (\CO)$. Let $Q = LV$ be the standard parabolic subgroup with Levi $L$. Since $w$ is also left $W_L$- and right $W_{\theta(L)}$-reduced, we have $\iota_L (Q\cdot x ) = w $. It is easily seen that $x$ is $L$-admissible and 
that $x$ is a good representatitive of the $Q$-orbit $Q\cdot x $.

In summary, to each $P$-orbit $\CO$ in $G \cdot e$, we will attach the following data:
\begin{itemize}
	\item the Weyl element $w = \iota_M (\CO) $ so that $w$ is left $W_M$-reduced and right $W_{\theta(M)}$-reduced and satisfies $w\theta(w) = e$ 
	\item the standard parabolic subgroup $Q = LV$ of $P$ with $L = M \cap w\theta(M)w^{-1}$
	\item  a good representative $x$ in $\CO$
	\item  the modular character $\delta_x : = \delta_{Q_x}\delta_Q^{-1/2}$
\end{itemize}
The role of the modular character $\delta_x$ will be clear from the next proposition. Assume that we have chosen the data as above for each $P$-orbit $\CO$.

Let $\chi$ be a character of $A_{G,H}$ and $\tilde{\chi}$ be a character of $Z$ such that $\tilde{\chi}$ and $\chi$ coincides on $A_{G,H}$. Assume that $\sigma$ is a smooth representation of $M$ on which $Z$ acts via $\tilde{\chi}$. 

\begin{prop}\label{prop::Geometric-lemma}
	We have
	\begin{align}\label{formula::geometric-lemma}
		\Ext{q}_{\CM^{\,\chi}(H,\,A_{G,H})} (\pi_{\CO},\chi) = \Ext{q}_{ \CM^{\,\chi} (L_x,\,A_{G,H})} (r_{L,M}(\sigma),\delta_x \chi^{\eta^{-1}}) \text{ for all } q\geqslant 0.
	\end{align}
	Here $r_{L,M}$ is the normalized Jacquet module as defined in \cite[Section 2.3]{B-Z-I}.
\end{prop}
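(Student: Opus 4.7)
The plan is to establish \eqref{formula::geometric-lemma} as a derived analogue of the $\Hom$-level argument developed in \cite[Section 3]{Offen-ParabolicInduction-JNT}, proceeding in three steps: Frobenius reciprocity for Ext, conjugation by $\eta$, and passage to the Jacquet module along the parabolic $Q\cap M$ of $M$. The starting point is \eqref{formula::composition factor}, which presents $\pi_{\CO}$ as the compactly induced representation $\ind_{H_{\eta}^P}^{H}((\sigma\delta_{P}^{1/2}|_{P_x})^{\eta})$, so it suffices to transport the Ext through these three manipulations.

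The first step applies the second isomorphism of Lemma \ref{lem::Frobenius} with $U = \{e\}$, regarding the one-dimensional target $\chi$ as $(\chi^{-1})^{\vee}$; this yields
\[
\Ext{q}_{\CM^{\,\chi}(H, A_{G,H})}(\pi_{\CO}, \chi) \;\cong\; \Ext{q}_{\CM^{\,\chi}(H_{\eta}^P, A_{G,H})}\!\bigl((\sigma\delta_{P}^{1/2}|_{P_x})^{\eta},\; \delta_{H_{\eta}^P}\delta_{H}^{-1}\cdot\chi\bigr).
\]
The second step exploits the identity $\eta H_{\eta}^{P}\eta^{-1} = P_x$, which follows directly from $\eta \cdot e = \eta\theta(\eta)^{-1} = x$ and the $\theta$-invariance of $H$. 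Conjugation by $\eta$ then gives an equivalence of categories $\CM^{\,\chi}(H_\eta^P, A_{G,H}) \simeq \CM^{\,\chi^{\eta^{-1}}}(P_x, A_{G,H})$ that sends the twisted representation $(\sigma\delta_{P}^{1/2}|_{P_x})^{\eta}$ back to $\sigma\delta_{P}^{1/2}|_{P_x}$ and replaces the target character by $\chi^{\eta^{-1}}$ multiplied by the transported modular factor; $A_{G,H}$ is fixed by the conjugation since it is central in $G$.

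The third and most substantive step is to realize the resulting Ext on $P_x$ as an Ext on $L_x$ against $r_{L,M}(\sigma)$. The geometric input, drawn from \cite[Section 3]{Offen-ParabolicInduction-JNT} under the good-representative hypothesis $x \in Lw$ with $L = M \cap w\theta(M)w^{-1}$, is that $P_x$ factorizes as a semidirect product $L_x \ltimes N_x$, where $N_x \subset U \cdot (V\cap M)$ is a closed unipotent subgroup projecting onto the unipotent radical $V\cap M$ of $Q\cap M$ in $M$. Since $\sigma\delta_{P}^{1/2}|_{P_x}$ factors through $P_x \to M$, applying Frobenius reciprocity along the exact functor of $N_x$-coinvariants collapses the Ext on $P_x$ to an Ext on $L_x$ against the $(V\cap M)$-coinvariants of $\sigma\delta_{P}^{1/2}$, which, after the standard normalizing shift, is $\delta_Q^{1/2}\cdot r_{L,M}(\sigma)$. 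The main obstacle is the bookkeeping of modular characters: one has to verify that the cumulative twist $\delta_{H_{\eta}^P}\delta_{H}^{-1}\cdot\delta_{Q}^{-1/2}$, restricted to $L_x$ after conjugation by $\eta$, coincides with $\delta_x = \delta_{Q_x}\delta_{Q}^{-1/2}$. This reduces to the same modular-character identity that underlies the $\Hom$-version of the geometric lemma and is routine once the structural decomposition $P_x = L_x \ltimes N_x$ is in place; every other ingredient in the reduction is either an exact functor or an equivalence of categories, so the argument transfers without change to all $q \geqslant 0$.
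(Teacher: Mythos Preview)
Your proposal is correct and follows essentially the same route as the paper: Frobenius reciprocity from $H$ down to $H_\eta^P$, conjugation by $\eta$ to land on $P_x$, then the structural decomposition $P_x = L_x \ltimes \tilde U$ (from \cite[Lemma~3.2--3.3]{Offen-ParabolicInduction-JNT}) together with a second application of Frobenius reciprocity to reach $L_x$ and the Jacquet module $r_{L,M}(\sigma)$. The only point the paper makes more explicit is the identification $P_x = Q_x$ (borrowed from the proof of \cite[Proposition~4.1]{Offen-ParabolicInduction-JNT}), which is precisely what converts $\delta_{P_x}$ into $\delta_{Q_x}$ and yields the character $\delta_x = \delta_{Q_x}\delta_Q^{-1/2}$ after the normalizations you allude to.
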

\begin{proof}
	The case $q = 0$ was proved in \cite[Proposition 4.1]{Offen-ParabolicInduction-JNT}. The simple observation here is that the proof there can be adpated to the case of higher extension groups with no extra efforts. We sketch the proof for the convenience of the reader. By \eqref{formula::composition factor} and the Frobenius reciprocity in Lemma \ref{lem::Frobenius}, we have
	\begin{align*}
		\Ext{q}_{\CM^{\,\chi}(H,\,A_{G,H})} (\pi_{\CO},\chi) = \Ext{q}_{\CM^{\,\chi} (H^P_{\eta},\,A_{G,H})} ((\sigma \delta_P^{1/2})^{\eta}, \delta_{H^P_{\eta}} \shskip \chi) = \Ext{q}_{\CM^{\,\chi} (P_x,\,A_{G,H})} (\sigma \delta_P^{1/2}, \delta_{P_x}  \chi^{\eta^{-1}}).
	\end{align*}
	By \cite[Lemma 3.2]{Offen-ParabolicInduction-JNT}, $P_x = L_x \ltimes \tilde{U}$, where $L$ is chosen as above and $\tilde{U}$ is a unipotent subgroup in $P_x$. By Lemma \ref{lem::Frobenius} again, we have
	\begin{align*}
		\Ext{q}_{\CM^{\,\chi} (P_x,\,A_{G,H})} (\sigma \delta_P^{1/2}, \delta_{P_x}  \chi^{\eta^{-1}}) = \Ext{q}_{\CM^{\,\chi} (L_x,\,A_{G,H})} ( \delta_P^{1/2} \cdot (\sigma)_{\tilde{U}} , \delta_{P_x}  \chi^{\eta^{-1}}),
	\end{align*} 
	where $(\sigma)_{\tilde{U}}$ is the $\tilde{U}$-coinvariants in $\sigma$ with ordinary action by $L_x$. By \cite[Lemma 3.3]{Offen-ParabolicInduction-JNT} , the projection of $\tilde{U}$ to $M$ under the natural projection map $P \ra M$ is $M \cap V$, hence the underlying space of $(\sigma)_{\tilde{U}}$ is exactly $r_{L,M}(\sigma)$. By the proof of \cite[Proposition 4.1]{Offen-ParabolicInduction-JNT}, we have $P_x = Q_x$. Taking into account the normalization factors in the definition of Jacquet modules, we can finish the proof of \eqref{formula::geometric-lemma}.
\end{proof}

For any finite length smooth representation $\pi$ of $G$, by  \cite{Aizenbud-Sayag-HomologicalMulti}, we know that $\Ext{i}_H (\pi,\BC)$ is of finite dimensional for all $ i \geqslant 0$.

\begin{defn}
	Let $\sigma$ be a finite length smooth representation of $M$ on which $A_{G,H}$ acts trivially. Let $\pi = I_P^G (\sigma)$.
	
	$(1)$ We say that the $P$-orbit $\CO$ is relevant to $\Ext{\sharp}_{H/ A_{G,H}} (\pi,\BC)$ resp. $\EP_{H /A_{G,H}} (\pi,\BC)$ if 
	\[ \Ext{\sharp}_{ H / A_{G,H}} (\pi_{\CO},\BC ) \neq \{0\}, \text{ resp. } \EP_{H / A_{G,H}} (\pi_{\CO},\BC) \neq 0.\]
	
	$(2)$ We say that $\Ext{\sharp}_{H/ A_{G,H}} (\pi,\BC) $ resp. $ \EP_{H /A_{G,H}} (\pi,\BC) $ is supported on a set $S$ of orbits if any orbit $\CO'$ not contained in $S$ is not relevant to $\Ext{\sharp}_{H/ A_{G,H}} (\pi,\BC) $ resp. $ \EP_{H /A_{G,H}} (\pi,\BC) $.
\end{defn}

\section{Representation theory of $\GL_n(F)$}\label{sec::GLn}

In this section we set up notations and recall some well known facts for representations of $\GL_n(F)$.

\subsection{Notations}

For any $ n \in \BN$, let $G_n = \GL_n(F)$. By convention we define $G_0$ as the trivial group. Let $|\cdot|_F$ be the normalized absolute value on $F$. Let $\nu$ be the character $\nu(g)  =  |\det g \shskip |_F$ on any $G_n$. (The $n$ will be implicit and hopefully clear from the context.) Without further mentioning, by a representation, we always mean a smooth complex valued representation. For any representation $\pi$ of $G_n$ and $a \in \BR$, let $\nu^a \pi$ be the representation obtained from $\pi$ by twisting it by the character $\nu^a$. Let $\pi^{\vee}$ be the contragredient of $\pi$.

Fix $n$ and let $G = G_n$.  The standard parabolic subgroups of $G$ are in bijection with compositions $ (n_1,  \cdots ,n_t)$ of $n$. For a composition $\alpha = (n_1,\cdots,n_t)$ of $n$ let $P_{\alpha} = M_{\alpha} \ltimes U_{\alpha}$ be the standard parabolic subgroup of $G$ consisting of block uppertriangular matrices with standard Levi subgroup
\begin{align*}
	M_{\alpha}  =  \{ \text{diag}(g_1,\cdots,g_t) \colon g_i \in G_{n_i}, i = 1,\cdots,t \} \cong  G_{n_1} \times \cdots \times G_{n_t}
\end{align*}
and unipotent radical $U_{\alpha}$. We will say that $P_{\alpha}$ and $M_{\alpha}$ are of type $\alpha$. If $\rho_1, \cdots, \rho_t$ are representations of $G_{n_1}, \cdots, G_{n_t}$ respectively , we denote by 
\begin{align*}
	\rho_1 \times  \cdots \times \rho_t  
\end{align*}
the induced representation $I_{P_{\alpha}}^{G} (\sigma)$ where $\sigma$ is the representation $\rho_1  \otimes\cdots \otimes \rho_t$ of $M_{\alpha}$.

Let $T$ be the subgroup of diagonal matrices and $B$ the subgroup of uppertriangular matrices in $G$. The Weyl group $W = N_G(T) /T$ of $G$  can be naturally identitfied with the permutation group $\mathfrak{S}_n$ of $\{1,2,\cdots, n\}$. We also identify $W$ with the subgroup of permutation matrices in $G$. Let $w_n = (\delta_{i,n+1-i}) \in W$ be the longest Weyl element. For the standard Levi subgroup $M$ of type $(n_1,n_2,\cdots,n_t)$, we set
\begin{align*}
	w_M = \begin{pmatrix}
		&  & I_{n_1} \\
		& \iddots & \\
		I_{n_t} & & 
	\end{pmatrix}.
\end{align*}

\subsection{Segments}

By a segment of cuspidal representations we mean a set of the form
\begin{align}\label{eq::GL-segment}
	\Delta  =  \{\nu^a \rho, \nu^{a+1}\rho, \cdots, \nu^b\rho  \},
\end{align}
where $\rho $ is a supercuspidal representation of $G_d$, for some $d$, and $a, b \in \BR$, $b- a \in \BZ_{\geqslant 0}$. The representation 
\begin{align}\label{eq::GL-induced}
	\nu^a\rho \times \nu^{a+1}\rho \times \cdots \times \nu^b\rho
\end{align}
has a unique irreducible quotient, which is an essentially square-integrable representaton and is denoted by $\textup{L}(\Delta)$.
The map $\Delta \mapsto \textup{L}(\Delta)$ gives a bijection between the set of segments of cuspidal representations and the set of equivalent classes of essentially square-integrable representations. The induced representation \eqref{eq::GL-induced} also has a unique irreducible subrepresentation, which will be denoted by $\textup{Z} (\Delta)$. When $a = -(k-1)/2$ and $b = (k-1)/2$ for some positive integer $k$, then $\textup{L}(\Delta)$ is the so-called generalized Steinberg representation and will be denoted by $\St_k(\rho)$.

For the segment $\Delta$ in \eqref{eq::GL-segment}, we define its length $l (\Delta)$ to be $b-a+1$. The dual segment $\Delta^{\vee}$ is defined by
\begin{align*}
	\Delta^{\vee}  =  \{ \nu^{-b} \rho^{\vee}, \nu^{-b+1} \rho^{\vee}, \cdots, \nu^{-a} \rho^{\vee}  \}.
\end{align*}
So we have $\textup{L}(\Delta)^{\vee} = \textup{L}(\Delta^{\vee})$ and $\textup{Z} (\Delta)^{\vee}  = \textup{Z} (\Delta^{\vee})$.

Let $\Delta_1$ and $\Delta_2$ be two segments. We say that $\Delta_1$ and $\Delta_2$ are linked if $\Delta_1 \cup \Delta_2$ forms a segment but neither $\Delta_1 \subset \Delta_2$ nor $\Delta_2 \subset \Delta_1$. For the segment $\Delta$ in \eqref{eq::GL-segment}, we denote the representation $\nu^a \rho$ by $\mathbf{b}(\Delta)$. If $\Delta_1$ and $\Delta_2$ are linked and $\mathbf{b}(\Delta_1) = \nu^j \mathbf{b}(\Delta_2)$ with $j < 0$, then we say that $\Delta$ precedes $\Delta'$ and write $\Delta \prec \Delta'$.

The description of Jacquet modules of $\textup{L}(\Delta)$ and $\textup{Z}(\Delta)$ will be useful for us. We recall them here (See \cite[Proposition 3.4, Proposition 9.5]{Zelevinsky-II}). For a segment $\Delta$, let
\begin{align*}
	\Delta = \Delta_1 \sqcup \Delta_2 \sqcup \cdots \sqcup \Delta_s
\end{align*}
be a partition of $\Delta$ into a disjoint union of segments $\Delta_1,\cdots,\Delta_s$ such that $\Delta_i \prec \Delta_j$ whenever $ i < j$. Then the Jacquet modules of $\textup{Z} (\Delta)$ are of the form
\begin{align*}
	\textup{Z}(\Delta_1) \otimes \textup{Z} (\Delta_2) \otimes \cdots \otimes \textup{Z} (\Delta_s),
\end{align*}
while the Jacquet modules of $\textup{L}(\Delta)$ are of the form (in reverse order)
\begin{align*}
	\textup{L}(\Delta_s) \otimes \textup{L} (\Delta_{s-1}) \otimes \cdots \otimes \textup{L} (\Delta_1).
\end{align*}

A multisegment is a multiset (that is, set with multiplicities) of segments. An order $\frm = \{\Delta_1,\cdots, \Delta_t \}$ on a multisegments $\frm$ is said to be standard if $\Delta_i \nprec\Delta_j$ for all $i < j$. Every multisegment $\frm $ admits at least one standard order. Let $\frm = \{\Delta_1, \cdots, \Delta_t  \}  $ be ordered in standard form. The representation 
\begin{align*}
	 \textup{L}( \Delta_1) \times \cdots \times \textup{L}(\Delta_t)
\end{align*}
is independent of the choice of order of standard form and has a unique irreducible quotient that we denote by $\textup{L}(\frm)$. The representation
\begin{align*}
	\textup{Z}( \Delta_1) \times \cdots \times \textup{Z}(\Delta_t)
\end{align*}
is also independent of the choice of order of standard form and a unique irreducible subrepresentation that we denote by $\textup{Z} (\frm)$.  

The following result can be extracted easily from results in \cite{Zelevinsky-II} and is useful for our later computations.

\begin{prop}\label{prop::Zel-ShortExactSeq}
	Let $\Delta_1$ and $\Delta_2$ be two segments such that $\Delta_1$ precedes $\Delta_2$. Let $\Delta = \Delta_1 \cup \Delta_2$ and $\Delta' = \Delta_1 \cap \Delta_2$. Then we have two sets of short exact sequences
	\begin{align*}
		0 \ra \textup{Z}  (\Delta) \times \textup{Z}(\Delta')  \ra \textup{Z} (\Delta_1) \times \textup{Z}(\Delta_2)  \ra \textup{Z} (\Delta_1,\Delta_2) \ra 0,\\
		0 \ra \textup{Z} (\Delta_1,\Delta_2)   \ra \textup{Z} (\Delta_2) \times \textup{Z}(\Delta_1)  \ra  \textup{Z} (\Delta)\times \textup{Z}(\Delta')  \ra 0,
	\end{align*}
	and
	\begin{align*}
		0 \ra \textup{L} (\Delta_1,\Delta_2)  \ra \textup{L} (\Delta_1) \times \textup{L}(\Delta_2)  \ra \textup{L} (\Delta) \times \textup{L}(\Delta') \ra 0,\\
		0 \ra \textup{L} (\Delta)  \times \textup{L}(\Delta') \ra \textup{L} (\Delta_2) \times \textup{L}(\Delta_1)  \ra  \textup{L} (\Delta_1,\Delta_2) \ra 0,
	\end{align*}
\end{prop}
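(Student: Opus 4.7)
The plan is to derive all four short exact sequences from Zelevinsky's classification theorems in \cite{Zelevinsky-II}. First, I would invoke the irreducibility criterion: the hypothesis $\Delta_1 \prec \Delta_2$ guarantees that $\Delta_1$ and $\Delta_2$ are linked, so both $\textup{Z}(\Delta_1) \times \textup{Z}(\Delta_2)$ and $\textup{L}(\Delta_1) \times \textup{L}(\Delta_2)$ are reducible. By Zelevinsky's length-two theorem for products of two irreducibles attached to a single pair of linked segments, each of these induced representations has a composition series of length exactly two. On the other hand, since $\Delta' \subsetneq \Delta$, the segments $\Delta$ and $\Delta'$ are \emph{not} linked, and hence $\textup{Z}(\Delta) \times \textup{Z}(\Delta')$ and $\textup{L}(\Delta) \times \textup{L}(\Delta')$ are irreducible and coincide with $\textup{Z}(\{\Delta,\Delta'\})$ and $\textup{L}(\{\Delta,\Delta'\})$ respectively. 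A comparison of cuspidal supports then pins the two composition factors of $\textup{Z}(\Delta_1) \times \textup{Z}(\Delta_2)$ to be $\textup{Z}(\Delta_1,\Delta_2)$ and $\textup{Z}(\Delta) \times \textup{Z}(\Delta')$, and analogously for the $\textup{L}$-side.

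Next I would identify which factor is the subrepresentation and which is the quotient. The precedence $\Delta_1 \prec \Delta_2$ forces the standard order on the multisegment $\{\Delta_1,\Delta_2\}$ to be $(\Delta_2,\Delta_1)$. By the very definitions of $\textup{Z}(\frm)$ and $\textup{L}(\frm)$ recalled in Section~\ref{sec::GLn}, $\textup{Z}(\Delta_1,\Delta_2)$ is then the unique irreducible subrepresentation of the standard module $\textup{Z}(\Delta_2) \times \textup{Z}(\Delta_1)$, while $\textup{L}(\Delta_1,\Delta_2)$ is the unique irreducible quotient of $\textup{L}(\Delta_2) \times \textup{L}(\Delta_1)$. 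Combined with the length-two analysis, this immediately yields the second short exact sequence in each family.

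For the first sequences, in which the inducing factors appear in non-standard order, I would deduce them from the second ones by applying the smooth dual functor $(\cdot)^{\vee}$. This functor is exact contravariant, sends $\textup{Z}(\Delta) \mapsto \textup{Z}(\Delta^{\vee})$ and $\textup{L}(\Delta) \mapsto \textup{L}(\Delta^{\vee})$, converts $\pi_1 \times \pi_2$ into $\pi_2^{\vee} \times \pi_1^{\vee}$, and exchanges unions and intersections of segments; crucially, the sign flip in the definition of $\Delta^{\vee}$ reverses precedence, converting $\Delta_1 \prec \Delta_2$ into $\Delta_2^{\vee} \prec \Delta_1^{\vee}$. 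A direct bookkeeping check then shows that dualizing the second sequence for the pair $(\Delta_2^{\vee},\Delta_1^{\vee})$ reproduces the first sequence for the pair $(\Delta_1,\Delta_2)$, and similarly on the $\textup{L}$-side.

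The main obstacle is purely bookkeeping: one must carefully track the standard-order convention, the action of duality on segments and on the precedence relation, and the commutativity up to isomorphism of $\pi \times \sigma$ when the inducing data are unlinked (needed to write $\textup{Z}(\Delta) \times \textup{Z}(\Delta')$ in either order). Once these conventions are pinned down, the proposition follows cleanly from the length-two and classification theorems in \cite{Zelevinsky-II}, which is why the statement only claims that the result is ``easily extracted'' from that paper.
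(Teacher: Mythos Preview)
The paper does not give a proof of this proposition at all; it simply states that the result ``can be extracted easily from results in \cite{Zelevinsky-II}''. Your outline is precisely the natural extraction, and the overall strategy---length-two structure of a product of linked segments, identification of the composition factors via cuspidal support, identification of the sub/quotient via the definitions of $\textup{Z}(\frm)$ and $\textup{L}(\frm)$, and then passage to the non-standard order via contragredient---is correct.

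There is one bookkeeping slip, however. You write that contragredient ``converts $\pi_1 \times \pi_2$ into $\pi_2^{\vee} \times \pi_1^{\vee}$''. This is false: normalized parabolic induction commutes with contragredient, so $(\pi_1 \times \pi_2)^{\vee} \cong \pi_1^{\vee} \times \pi_2^{\vee}$, with the factors in the \emph{same} order. If you trace through your own recipe with your stated (incorrect) formula, dualizing the second $\textup{Z}$-sequence for $(\Delta_2^{\vee},\Delta_1^{\vee})$ would land you back on a sequence with middle term $\textup{Z}(\Delta_2) \times \textup{Z}(\Delta_1)$, not $\textup{Z}(\Delta_1) \times \textup{Z}(\Delta_2)$. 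With the correct formula the computation goes through exactly as you intend: dualizing
\[
0 \ra \textup{Z}(\Delta_2^{\vee},\Delta_1^{\vee}) \ra \textup{Z}(\Delta_1^{\vee}) \times \textup{Z}(\Delta_2^{\vee}) \ra \textup{Z}(\Delta^{\vee}) \times \textup{Z}((\Delta')^{\vee}) \ra 0
\]
yields the first $\textup{Z}$-sequence for $(\Delta_1,\Delta_2)$ on the nose. So the error is local and easily repaired; the rest of your argument stands.
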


%\subsection{Notation}

%\subsection{XXX}

%Jacquet moduels of $Z(\Delta)$.

\section{The symplectic pair}\label{sec::symplectic}

For $n \in \BN$, let $G_{n} = \GL_{n}(F)$. Let 
\begin{align*}
	H_{2n} = \Sp_{2n} (F)  = \{ g \in G_{2n} \colon {}^t g  J_{2n} g = J_{2n}\},
\end{align*}
where 
\begin{align*}
	J_{2n}  =  \begin{pmatrix}
		  & w_n \\
		-w_n & 
	\end{pmatrix}.
\end{align*}
Let $G = G_{2n}$ and $H = H_{2n}$. Then $A_{G,H} = \{e\}$. Let $\theta$ be the involution on $G$ defined by
\begin{align*}
	\theta(g) = J \,\transinv{g} J^{-1}
\end{align*}
so that $H = G^{\theta}$.   

\subsection{The orbit analysis}

Let 
\begin{align*}
	X = \{ x \in G \colon \theta(x)x = e \}
\end{align*}
be the symmetric space associated to $(G,\theta)$ with the $G$-action given by $g \cdot x = g x \theta(g)^{-1}$. Note that $X J$ is the space of skew-symmetric spaces in $G$ and $(g \cdot x) J = g (xJ) \transpose{g}$. It follows that $X = G \cdot e$ is a single $G$-orbit.

The analysis of parabolic orbits in $X$ has been given in \cite[Section 3]{Offen-symplectic-disc-spectrum-IsraelJournal} and \cite[Section 3.1]{Offen-Residual-Duke}, to which we refer the reader for proofs of the results in this section. We follow mainly the exposition in \cite{Mitra-Offen-Sayag-Klyachko}.

Let $W$ be the Weyl group of $G$ and 
\begin{align*}
	[w_{2n}] = \{  w w_{2n}w^{-1} \colon w \in W\}
\end{align*}
be the conjugacy class of $w_{2n}$ in $W$. It is the set of involutions without fixed points in $W$. Then the set of twisted involutions $\mathfrak{I}_0(\theta) = [w_{2n}] w_{2n}$.

Let $\alpha = (n_1,\cdots,n_k)$ be a composition of $2n$ and let $P = P_{\alpha} = MU$. Note that $\theta(P) = P_{(n_k,\cdots,n_1)}$ and $\theta(M) = w_{2n} M w_{2n}^{-1}  =  M_{(n_k,\cdots,n_1)}$. For the symplectic pair, the fibration map mentioned in Section \ref{sec::gemeotric lemma} is a bijection.
\begin{lem}
	There is a bijection
	\begin{align*}
		\iota_M \colon P \backslash X  \ra {}_MW_{\theta(M)}  \cap [w_{2n}] w_{2n}
	\end{align*}
	such that
	\begin{align*}
		P x \theta(P)  =  P \,\iota_M ( P \cdot x) \, \theta(P).
	\end{align*}
\end{lem}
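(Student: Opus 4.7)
The plan is to extract the bijection from the twisted Bruhat decomposition $G = \bigsqcup_{w \in {}_M W_{\theta(M)}} P w \theta(P)$. For any $x \in X$ there is a unique $w \in {}_M W_{\theta(M)}$ with $x \in P w \theta(P)$, and since $P x \theta(P)$ clearly depends only on the twisted orbit $P \cdot x$, this assignment descends to a well-defined map $\iota_M : P \backslash X \to {}_M W_{\theta(M)}$ satisfying $P x \theta(P) = P \iota_M(P \cdot x) \theta(P)$.

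Next I would check that the image lies in $\mathfrak{I}_0(\theta) = [w_{2n}] w_{2n}$. Writing $x = p_1 w \theta(p_2)$ with $p_i \in P$, the involution identity $\theta(x) = x^{-1}$ rearranges to $x = \theta(x^{-1}) = p_2^{-1} \theta(w)^{-1} \theta(p_1)^{-1} \in P \theta(w)^{-1} \theta(P)$. Because $\theta$ exchanges ${}_M W_{\theta(M)}$ and ${}_{\theta(M)} W_M$ while inversion does the same, the element $\theta(w)^{-1}$ lies in ${}_M W_{\theta(M)}$. Uniqueness in the Bruhat decomposition forces $w = \theta(w)^{-1}$, i.e.\ $w \theta(w) = e$, as required. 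Surjectivity is then automatic: for any $w \in {}_M W_{\theta(M)} \cap [w_{2n}] w_{2n}$, the identity $w \theta(w) = e$ exhibits $w$ itself as an element of $X$, and $\iota_M(P \cdot w) = w$ by construction.

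The main obstacle is injectivity, i.e.\ showing that $X \cap P w \theta(P)$ is a single twisted $P$-orbit. Given $x = p_1 w \theta(p_2) \in X$, the relation $\theta(x) x = e$ unwinds to $w^{-1} q w = \theta(q^{-1})$ with $q := p_2 p_1 \in P$, which is exactly the condition that $q$ is a $\theta_w$-twisted involution, where $\theta_w(g) = w \theta(g) w^{-1}$ is an involution on $G$ (using $w \theta(w) = e$). The task of producing $p \in P$ with $p \cdot w = x$ becomes the fixed-point equation $p_1^{-1} p = w \theta(p_2 p) w^{-1}$ inside $P$. Since $w$ is left $W_M$- and right $W_{\theta(M)}$-reduced, $\theta_w$ preserves the standard Levi decomposition $P = MU$, with $\theta_w$-stable Levi $L = M \cap w\theta(M)w^{-1}$ and $\theta_w$-stable unipotent piece. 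One then solves the equation factor by factor: on the unipotent part one invokes the surjectivity of $u \mapsto u \theta_w(u)^{-1}$ (a Lang-type argument for unipotent groups over $F$, which is elementary since $U$ is a successive extension of $F$-vector groups), and on the reductive piece $L$ the equation reduces to the fact, already established at the start of this section for the ambient case, that the twisted orbit of a single point exhausts the corresponding symmetric space.

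If this direct approach proves cumbersome, an alternative is to quote the orbit parameterisation of Helminck--Wang, specialising their general theorem on $(P, \theta)$-orbits on symmetric varieties to $\GL_{2n}/\Sp_{2n}$ and matching our parameter set with theirs; the matching uses that $\mathfrak{I}_0(\theta)$ consists of fixed-point-free involutions times $w_{2n}$, which is precisely the combinatorial description arising on the symplectic side.
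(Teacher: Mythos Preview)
The paper does not supply its own proof of this lemma: it refers the reader to \cite[Section~3]{Offen-symplectic-disc-spectrum-IsraelJournal} and \cite[Section~3.1]{Offen-Residual-Duke} for all of the orbit analysis in this subsection. Your overall strategy---define $\iota_M$ via the twisted Bruhat decomposition, verify the twisted-involution condition on the image, then prove injectivity by solving a fixed-point equation factor by factor through a unipotent/Levi decomposition of $P_x$---is the standard route and matches what is done in those references.

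There is, however, a concrete gap in your surjectivity step. You assert that for $w\in{}_MW_{\theta(M)}\cap[w_{2n}]w_{2n}$ ``the identity $w\theta(w)=e$ exhibits $w$ itself as an element of~$X$''; this conflates an identity in $W$ with one in $G$. Under the paper's identification of $W$ with permutation matrices, $\theta$ sends a permutation matrix $w$ to the \emph{signed} permutation matrix $JwJ^{-1}$, so $\theta(w)=w^{-1}$ in $G$ reads $wJw=J$, whereas the Weyl-group identity only gives $w\,w_{2n}\,w=w_{2n}$. Writing $J=\varepsilon_0 w_{2n}$ with $\varepsilon_0=\textup{diag}(I_n,-I_n)$, one finds $wJw=J$ holds iff $w$ commutes with $\varepsilon_0$; for $n=2$ the element $w=(13)(24)\in[w_4]w_4$ does not, so its permutation matrix is not in $X$. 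The fix is to choose a lift in $N_G(T)\cap X$: since $\sigma:=ww_{2n}$ is fixed-point-free, any invertible skew-symmetric signed-permutation matrix $A$ supported on $\sigma$ gives $\dot w:=AJ^{-1}\in X\cap N_G(T)$ with $\iota_M(P\cdot\dot w)=w$. Your injectivity paragraph uses the same identification, so $w$ must be replaced by such a $\dot w$ there as well; and in the Levi step you should note that $(L,\theta_{\dot w})$ is a product of smaller symplectic pairs and diagonal pairs $\GL_d\times\GL_d/\Delta\GL_d$, since it is only for these that the single-orbit fact you invoke is available. Finally, your image argument only lands in $\mathfrak I_0(\theta)$; despite the paper's phrasing one has $[w_{2n}]w_{2n}\subsetneq\mathfrak I_0(\theta)$ in general, and the sharper containment again uses the skew-symmetric interpretation $x\mapsto xJ$.
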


For the later computations, it suffices for us to describe explicitly the relevant data for $M$-admissible orbits in $X$. Let
\begin{align*}
	S_2[\alpha] = \{ \tau \in \mathfrak{S}_k \colon \tau^2 = e,\  n_{\tau(i)} = n_i,\  i = 1,\cdots, k \text{ and } n_i \text{ is even if } \tau(i) = i\}.
\end{align*}
For any $d \in \BN$, let ${}^{\star}$ denote the involution on $G_d$ defined by $g^{\star} = w_d \transinv{g} w_d^{-1}$.
\begin{lem}\label{lem::admissible orbit-Sp}
	There is a bijection between the $M$-admissible $P$-orbits in $X$ and $S_2[\alpha]$ that satisfies the following properties. For an $M$-admissible orbit $\CO$, let $ w = \iota_M(\CO)$ and $\tau \in S_2[\alpha] $ correspondes to $\CO$. Then there exists a good representative $x \in \CO \cap Mw $ such that:
	\begin{itemize}
		\item[(1)] $M_x = \{ \textup{diag}(g_1,\cdots,g_k) \colon g_{\tau(i)} = g_i^{\star} \text{ if }\tau(i) \neq i \text{ and }g_i \in H_{n_i} \text{ if }\tau(i) = i\}.$ 
		\item [(2)] $(\delta_{P_x} \delta_P^{-1/2}) (\textup{diag} (g_1,\cdots,g_k))  = \prod_{ i < \tau(i)}  \nu (g_i)$.
	\end{itemize} 
\end{lem}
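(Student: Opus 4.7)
The plan is three-fold: classify $M$-admissible $P$-orbits combinatorially in terms of $S_2[\alpha]$, construct explicit good representatives, and then read off the stabilizer and modulus character block-by-block.

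For the bijection, I would decompose each $w \in {}_MW_{\theta(M)} \cap [w_{2n}]w_{2n}$ as a block permutation associated to some $\tau \in \mathfrak{S}_k$ composed with local Weyl elements inside the blocks merged by $\tau$. The admissibility condition $wM_{(n_k,\ldots,n_1)}w^{-1} = M$ forces $n_{\tau(i)} = n_i$ for all $i$; the twisted involution condition $w\theta(w) = e$ forces $\tau^2 = 1$ and, for any fixed point $\tau(i) = i$, makes the local Weyl component equal $w_{n_i}$, which in turn represents a non-degenerate alternating form on $F^{n_i}$, so $n_i$ must be even. Conversely, every $\tau \in S_2[\alpha]$ uniquely determines such a $w$, yielding the bijection.

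For the good representative, I would look for $x = mw \in Mw$ with $\theta(x)x = e$ by solving the blockwise system $\theta(m) = wm^{-1}w^{-1}$: pair up blocks via $\tau$, taking $m_i = I_{n_i}$ for $i \leqslant \tau(i)$ and reading off $m_{\tau(i)}$ from the equation. Then (1) is a direct unpacking: substituting $g = \textup{diag}(g_1,\ldots,g_k) \in M$ into $g x \theta(g)^{-1} = x$ and tracking how $w$ permutes the blocks of $\theta(g)$ yields exactly the conditions $g_{\tau(i)} = g_i^{\star}$ when $\tau(i) \neq i$ and $g_i \in H_{n_i}$ when $\tau(i) = i$.

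Finally, for the modulus character (2), I would use $P_x = Q_x = L_x \ltimes \tilde U$ with $L = M$ (by admissibility), and compute $\delta_{P_x}$ from the adjoint action of $L_x$ on $\textup{Lie}(\tilde U)$. Expanding $\textup{Lie}(U)$ as a direct sum of root spaces indexed by pairs of block indices $(i,j)$ with $i<j$, the $\theta_x$-twisting pairs up root spaces symmetrically under $\tau$, and those that contribute to $\delta_{P_x}$ are exactly half of the root spaces of $U$ (one of each $\theta_x$-pair). Combining with $\delta_P^{-1/2}$ block-by-block, the net contribution for each pair $\{i,\tau(i)\}$ with $i < \tau(i)$ is a single clean factor $\nu(g_i)$, as claimed, while fixed blocks contribute trivially since $g_i \in H_{n_i}$ has determinant $1$. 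The main obstacle is precisely this modulus computation: one must carefully bookkeep which root spaces of $U$ descend to $\tilde U$ under the twisted action and verify that the exponents cancel with those in $\delta_P^{-1/2}$ to leave exponent exactly one on each $\nu(g_i)$; once the combinatorial setup is in place this reduces to a finite and uniform calculation using the standard positive-root description of $\GL_{2n}$.
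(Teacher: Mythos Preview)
The paper does not give its own proof of this lemma; it states at the start of \S5.1 that the orbit analysis is taken from \cite[Section 3]{Offen-symplectic-disc-spectrum-IsraelJournal} and \cite[Section 3.1]{Offen-Residual-Duke} and refers the reader there for proofs. Your proposal is a faithful sketch of exactly the direct computation carried out in those references: parametrize admissible $w$'s by block permutations $\tau$, build a good representative $x = mw$ blockwise, and then read off $M_x$ and $\delta_{P_x}\delta_P^{-1/2}$ from the root-space decomposition of $\tilde U$. So your approach is correct and coincides with what the paper is invoking.

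One small point of phrasing: in the bijection step you say that for a fixed point $\tau(i)=i$ the local Weyl component ``represents a non-degenerate alternating form on $F^{n_i}$, so $n_i$ must be even.'' The cleaner way to see the parity constraint is purely combinatorial: since $w\in [w_{2n}]w_{2n}$, the permutation $ww_{2n}$ is a fixed-point-free involution, and by the explicit relation $w = \textup{diag}(w_{n_1},\ldots,w_{n_k})\,w_\tau\, w_{2n}$ this forces, on each block with $\tau(i)=i$, the local longest element $w_{n_i}$ itself to be fixed-point-free, which happens exactly when $n_i$ is even. Your alternating-form picture is really about the good representative $x$ (whose block $x_iJ_{n_i}$ is skew-symmetric invertible), not about $w$; both viewpoints give the same parity condition, but it is worth keeping them separate.
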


Every $\tau \in \mathfrak{S}_k$ defines a uniqaue $w_{\tau} \in W$ such that for every $g= \textup{diag}(g_1,\cdots,g_k) \in M$ we have
\begin{align*}
	w_{\tau} g w_{\tau}^{-1} = \textup{diag}(g_{\tau^{-1}(1)}, \cdots, g_{\tau^{-1}(k)}).
\end{align*}
We can further make the relation between $w = \iota_M(\CO)$ and $\tau \in S_2[\alpha]$ corresponding to $\CO$ explicit. In fact, we have
\begin{align}\label{formula::admissible-involution-permutation-Sp}
	\textup{diag} (w_{n_1},\cdots, w_{n_k})  w_{\tau} w_{2n} = w.
\end{align}

\subsection{Consequences of the geometric lemma}

We formulate a useful result on the extension groups of parabolically induced representations.
\begin{lem}\label{lem::cor-geo-lemm-Sp}
	Let $P = MU$ be a standard parabolic subgroup of $G$ and $\sigma$ a finite length smooth representation of $M$. Let $\pi = I_P^G (\sigma)$.  For a $P$-orbit $\CO$ let $w,L,Q$ and $x$ be the data associated to $\CO$ as in Section \ref{sec::gemeotric lemma}. We further take $x$ as a good representative of $Q\cdot x$ as in Lemma \ref{lem::admissible orbit-Sp}. Assume that $L$ is of type $(n_1,\cdots,n_k)$. Then:
	
	 \textup{(1)}$\quad$ If the orbit $\CO$ is relevant to $\Ext{\sharp}_H (\pi,\BC)$,then either all $n_i$ are even, $i = 1,\cdots,k$, $M = L$ and $\CO$ is the unique $M$-admissible orbit such that $\iota_M (\CO) = w_M$, or there exists a composition factor $\rho_1 \otimes \cdots \otimes \rho_k$ of $r_{L,M}(\sigma)$ such that
	 \begin{align}\label{eq::cor-geo-lem-Sp-1}
	 	w_{\rho_i} = w_{\rho_{\tau(i)}} |\cdot|^{n_i}_F
	 \end{align}
	 for any $i$ with $i < \tau(i)$, where $w_{\rho}$ stands for the central character of $\rho$ and $\tau \in \mathfrak{S}_k$ is the permutation corresponding to the $L$-admissible orbit $Q \cdot x$ as in Lemma \ref{lem::admissible orbit-Sp}.
	 
	 \textup{(2)}$\quad $ If the orbit $\CO$ is relevant to $\EP_H(\pi,\BC)$, then all $n_i$ are even, $i = 1,\cdots,k$, $M = L$, and  $\CO$ is the unique $M$-admissible orbit such that $\iota_M (\CO) = w_M$.
\end{lem}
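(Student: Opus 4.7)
The plan is to apply the geometric lemma (Proposition \ref{prop::Geometric-lemma}) to reduce the Ext and EP computations on $H$ to analogous computations on the smaller group $L_x$, and then to exploit the explicit structure of $L_x$ and $\delta_x$ supplied by Lemma \ref{lem::admissible orbit-Sp} via central character arguments.

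By Proposition \ref{prop::Geometric-lemma},
\[
\Ext{q}_H(\pi_\CO, \BC) \cong \Ext{q}_{L_x}(r_{L,M}(\sigma), \delta_x), \quad q \geqslant 0,
\]
and the same identification passes to $\EP$ once we know all Ext groups involved are finite-dimensional, which is the case by Aizenbud--Sayag. For each index $i$ with $i < \tau(i)$, the one-parameter family
\[
Z_i = \{\textup{diag}(I_{n_1}, \ldots, a I_{n_i}, \ldots, a^{-1} I_{n_{\tau(i)}}, \ldots, I_{n_k}) : a \in F^\times\} \cong F^\times
\]
is central in $L_x$ by Lemma \ref{lem::admissible orbit-Sp}(1). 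A direct calculation shows that $Z_i$ acts on any composition factor $\rho_1 \otimes \cdots \otimes \rho_k$ of $r_{L,M}(\sigma)$ by the character $a \mapsto w_{\rho_i}(a) w_{\rho_{\tau(i)}}(a)^{-1}$, and on $\delta_x$ by $a \mapsto |a|_F^{n_i}$ by Lemma \ref{lem::admissible orbit-Sp}(2). These two characters coincide precisely when condition \eqref{eq::cor-geo-lem-Sp-1} holds.

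For part (1), suppose $\CO$ is relevant. If $\tau \neq \textup{id}$, pick any $i$ with $i < \tau(i)$; Lemma \ref{lem::vanishing} applied to the generator of $Z_i$ shows that every composition factor $\rho_1 \otimes \cdots \otimes \rho_k$ failing the central character match on $Z_i$ contributes zero to $\Ext{\sharp}_{L_x}(\cdot, \delta_x)$. Devissage along a composition series of the finite-length module $r_{L,M}(\sigma)$ then forces the existence of a composition factor satisfying \eqref{eq::cor-geo-lem-Sp-1} for \emph{every} $i$ with $i < \tau(i)$. If instead $\tau = \textup{id}$, then the condition $\tau \in S_2[(n_1,\ldots,n_k)]$ forces all $n_i$ to be even, and formula \eqref{formula::admissible-involution-permutation-Sp} yields $w = \textup{diag}(w_{n_1}, \ldots, w_{n_k}) w_{2n}$. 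Since $\textup{diag}(w_{n_1}, \ldots, w_{n_k}) \in M$ (as the blocks of $L$ refine those of $M$) and $\theta(M) = w_{2n} M w_{2n}^{-1}$, one computes $w \theta(M) w^{-1} = M$, so $L = M \cap w\theta(M)w^{-1} = M$. An explicit block computation identifies this $w$ with $w_M$, and uniqueness of $\CO$ follows from the bijectivity of $\iota_M$.

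Part (2) then follows immediately: if $\tau \neq \textup{id}$, some $Z_i \cong F^\times$ is a central $F^\times$-subgroup of $L$ contained in $L_x$, so Proposition \ref{prop::EP-trivial} (with $L$ in the role of $G$ and $L_x$ in the role of $H$) gives $\EP_{L_x}(r_{L,M}(\sigma), \delta_x) = 0$. Relevance to $\EP$ therefore forces $\tau = \textup{id}$, and one concludes by the $\tau = \textup{id}$ analysis already carried out. The main obstacle I anticipate is justifying the equality $L = M$ in the $\tau = \textup{id}$ case --- a priori, nothing immediately rules out $L \subsetneq M$ with the associated $L$-admissible orbit having $\tau = \textup{id}$, and the resolution requires the explicit identification of $w$ via \eqref{formula::admissible-involution-permutation-Sp} combined with the observation that $\textup{diag}(w_{n_1}, \ldots, w_{n_k})$ already lies in $M$, so that conjugation by $w$ stabilizes $M$.
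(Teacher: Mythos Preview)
Your proof is correct and follows essentially the same approach as the paper: both reduce to $\Ext{\sharp}_{L_x}(r_{L,M}(\sigma),\delta_x)$ via Proposition~\ref{prop::Geometric-lemma}, use the central one-parameter subgroups $Z_i$ together with Lemma~\ref{lem::vanishing} to force \eqref{eq::cor-geo-lem-Sp-1} when $\tau\neq\mathrm{id}$, and invoke Proposition~\ref{prop::EP-trivial} for part~(2). The only noteworthy difference is in the $\tau=\mathrm{id}$ step: the paper deduces $L=M$ from the fact that $w=\iota_M(\CO)$ is left $W_M$-reduced and right $W_{\theta(M)}$-reduced, whereas you compute $w\theta(M)w^{-1}=M$ directly using $\textup{diag}(w_{n_1},\dots,w_{n_k})\in L\subset M$ and $w_{2n}^2=I$ --- your route is slightly more explicit but amounts to the same observation.
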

\begin{proof}
	For \textup{(1)}, by definition and Proposition \ref{prop::Geometric-lemma}, if $\CO$ is relevant, then we have 
	\begin{align*}
			\Ext{\sharp}_{L_x} (r_{L,M}(\sigma), \delta_x) \neq 0.
	\end{align*}
	Thus there is a composition factor $\rho_1 \otimes \cdots \otimes  \rho_k$ such that 
	$$\Ext{\sharp}_{L_x} (\rho_1 \otimes \cdots \otimes \rho_k, \delta_x ) \neq 0.$$
	 If there is some $i$ such that $\tau(i) \neq i$, then take $z = \textup{diag}(g_1,\cdots,g_k)$ such that for $i$ with $i < \tau(i)$, $g_i = \lambda_i I_{n_i}$ and $g_{\tau(i)} = \lambda_i^{-1} I_{n_i}$ with $\lambda_i \in F$, and that for $i$ with $\tau(i) = i$, $g_i = I_{n_i}$. We have that $z$ is central in $L_x$. Then \eqref{eq::cor-geo-lem-Sp-1} follows from Lemma \ref{lem::vanishing} and the description of $\delta_x$ in Lemma \ref{lem::admissible orbit-Sp}. If for all $i = 1,\cdots,k$, $\tau(i) = i$, then all $n_i$ are even. By \eqref{formula::admissible-involution-permutation-Sp}, we have $ w = \textup{diag} (w_{n_1},\cdots, w_{n_k}) w_{2n} $. Note that $w = \iota_M(\CO)$ is both left $W_M$- and right $W_{\theta(M)}$-reduced, we must have $L = M$ so that $\CO$ is $M$-admissible and $\iota_M (\CO) = w_M$.
	
	For (2), by definition and Proposition \ref{prop::Geometric-lemma}, if $\CO$ is relevant, then 
	\begin{align*}
		\EP_{L_x} (r_{L,M} (\sigma),\delta_x) \neq 0.
	\end{align*}
	Note that by our assumption on $\sigma$, $r_{L,M}(\sigma) $ is a finite length representation of $L$. By Proposition \ref{prop::EP-trivial} we then have $\tau(i) = i$ for all $i = 1,\cdots, k$. So part $(2)$ follows by the same argument as above.
\end{proof}

 \subsection{Extensions of essentially square-integrable representations}

In the symplectic model, we have the multiplicity one theorem and disjointness of Whittaker models and symplectic models due to Heumos and Rallis \cite{Heumos-Rallis}.
\begin{lem}\label{lem::sp-multi-one}
	Let $\pi$ be an irreducible representation of $G$. Then $\Hom_H (\pi,\BC) \leqslant 1$. When $\pi$ is generic, then $\Hom_H (\pi,\BC) = 0$.
\end{lem}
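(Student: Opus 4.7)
The plan is to reduce both statements to classical orbit analyses going back to Heumos and Rallis. For the multiplicity one inequality, I would invoke the Gelfand--Kazhdan criterion. Consider the anti-involution $\iota \colon g \mapsto J \transpose{g} J^{-1}$ on $G$. Since $\transpose{J}=-J$ one checks that $\iota$ preserves $H$ setwise, and because $\iota$ differs from the usual transpose-inverse $g \mapsto \transinv{g}$ by conjugation by $J \in H$, there is a natural equivalence $\pi^{\iota} \simeq \pi^{\vee}$ for every irreducible $\pi$. It then suffices to verify that every $H$-biinvariant distribution $T$ on $G$ is $\iota$-invariant. This follows once one establishes a Cartan-type decomposition of $H\backslash G / H$ and observes that each double coset $HgH$ is stable under $\iota$; representatives can be chosen in a $\theta$-split torus, and the invariance is checked directly on them. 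The Gelfand--Kazhdan machinery then gives $\dim \Hom_H(\pi,\BC)\cdot \dim \Hom_H(\pi^{\vee},\BC)\leqslant 1$, whence $\dim \Hom_H(\pi,\BC)\leqslant 1$.

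For the vanishing of the symplectic period on a generic $\pi$, the strategy is to realize $\pi$ inside its Whittaker model $\mathcal{W}(\pi,\psi)$, restrict to $H$, and analyze this restriction through a Bernstein--Zelevinsky filtration indexed by the double coset space $N\backslash G / H$, where $N$ is the maximal upper unipotent subgroup of $G$ and $\psi$ is a generic character on $N$. Each $(N,\psi,H)$-orbit contributes a subquotient isomorphic to $\ind_{H \cap x^{-1}Nx}^{H}(\psi_x)$ for some $x$, where $\psi_x$ is the twisted character obtained by conjugation. By Frobenius reciprocity, an $H$-invariant functional on such a subquotient is given by a character of $H \cap x^{-1}Nx$ that extends $\psi_x$. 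One then shows that on every relevant orbit the character $\psi_x$ restricts non-trivially to the commutator subgroup of $H \cap x^{-1}Nx$, so that no such extension exists and every graded piece contributes zero. Combining the graded pieces yields $\Hom_H(\pi,\BC)=0$.

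The main obstacle in both halves lies in the explicit orbit analysis: producing the Cartan decomposition of $H\backslash G/H$ with the $\iota$-invariance property, and enumerating the $(N,H)$-orbits on $G$ together with the precise structure of $H \cap x^{-1}Nx$ and of $\psi_x|_{H \cap x^{-1}Nx}$. Both computations are carried out in detail by Heumos and Rallis in \cite{Heumos-Rallis}, to which we refer for the full argument; the lemma is then a direct transcription of their main results in our notation.
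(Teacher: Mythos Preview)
Your proposal is correct and, in essence, matches the paper: the paper does not give any proof of this lemma at all but simply attributes both statements to Heumos and Rallis \cite{Heumos-Rallis}. Your write-up goes further by sketching the actual content of their argument (the Gelfand--Kazhdan criterion for multiplicity one, and the $(N,\psi)$-orbit analysis for the disjointness of Whittaker and symplectic models), and then likewise defers to \cite{Heumos-Rallis} for the details; so there is no discrepancy in approach, only in the level of detail you supply.
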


For a smooth finite length representation $\pi$ of $G_{2d}$, if $\Hom_{H_{2d}} (\pi, \BC) \neq 0$, then we say simply that $\pi$ is $\Sp$-distinguished.

Let $\rho$ be an irreducible cuspidal representation of $G_d$ and 
$$\Delta = \{ \rho, \nu \rho, \cdots, \nu^{k-1} \rho\}$$
 be a segment ($2n = k d$). We begin by computing the extension groups of $Z(\Delta)$.

\begin{prop}\label{prop::speh-ext}
	We have
	\begin{align*}
 		\Ext{i}_H (Z(\Delta), \BC) = \begin{cases*}
				                       \BC,\quad i =0\text{ and }2 | k, \\
				                       0,\quad \text{otherwise.}
			  						 \end{cases*}
	\end{align*}
\end{prop}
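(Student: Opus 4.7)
My plan is to realize $Z(\Delta)$ as the unique irreducible submodule of the standard module $\pi_k := \rho \times \nu\rho \times \cdots \times \nu^{k-1}\rho = I_P^G(\sigma)$, where $P = MU$ is the standard parabolic of type $\alpha = (d,\ldots,d)$ and $\sigma = \rho \otimes \nu\rho \otimes \cdots \otimes \nu^{k-1}\rho$, and then to extract $\Ext^{\sharp}_H(Z(\Delta),\BC)$ by combining a vanishing theorem for $\Ext^{\sharp}_H(\pi_k,\BC)$ with an induction on $k$ driven by the short exact sequences of Proposition \ref{prop::Zel-ShortExactSeq}.

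For the vanishing $\Ext^{\sharp}_H(\pi_k,\BC) = 0$ I would apply the geometric lemma and its symplectic corollary (Lemma \ref{lem::cor-geo-lemm-Sp}). Cuspidality of each tensor factor of $\sigma$ forces $r_{L,M}(\sigma) = 0$ whenever $L \subsetneq M$, so only $M$-admissible $P$-orbits (indexed by $\tau \in S_2[\alpha]$) can contribute. For $\tau \neq \mathrm{id}$ with a pair $i < j = \tau(i)$, the central character condition of Lemma \ref{lem::cor-geo-lemm-Sp}(a)(ii) reads $\omega_{\nu^{i-1}\rho} = \omega_{\nu^{j-1}\rho}|\cdot|_F^{d}$, which unwinds to $(i-1)d = jd$, i.e.\ $i = j+1$, contradicting $i < j$. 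Hence only the $\tau = \mathrm{id}$ orbit is possibly relevant, and only when $d$ is even; for this orbit, $M_x = \prod_{j=1}^{k} H_d$ and $\delta_x = 1$, so by the K\"unneth theorem (Theorem \ref{thm::Kunneth}),
\[
\Ext^{\sharp}_H(\pi_k,\BC) \cong \bigotimes_{j=1}^{k}\Ext^{\sharp}_{H_d}(\rho,\BC)
\]
(since $\nu$ is trivial on $H_d$). To kill this tensor product I would establish $\Ext^{i}_{H_d}(\rho,\BC) = 0$ for all $i \geq 0$: write $\rho \cong \cind_{J}^{G_d}(\lambda)$ from a compact-modulo-center open subgroup $J$ via cuspidal type theory, apply Mackey decomposition to get $\rho|_{H_d} \cong \bigoplus_{g}\cind_{K_g}^{H_d}(\lambda^g)$ with each $K_g := H_d \cap gJg^{-1}$ compact (since $K_g \cap Z(G_d) \subseteq Z(H_d) = \{\pm I\}$), and invoke Frobenius reciprocity (the $U = \{e\}$ case of Lemma \ref{lem::Frobenius}) to convert each summand into $\Ext^{i}_{K_g}(\lambda^g,\BC)$; this vanishes for $i > 0$ by semisimplicity of smooth representations of compact groups and for $i = 0$ by Heumos-Rallis (Lemma \ref{lem::sp-multi-one}).

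The long exact sequence for $0 \to Z(\Delta) \to \pi_k \to \pi_k/Z(\Delta) \to 0$ then produces the shift $\Ext^{i}_H(Z(\Delta),\BC) \cong \Ext^{i+1}_H(\pi_k/Z(\Delta),\BC)$ for $i \geq 0$. The base case $k=1$ (where $Z(\Delta) = \rho$) follows directly from the Mackey-type vanishing above. For $k \geq 2$, I would apply Proposition \ref{prop::Zel-ShortExactSeq} with $\Delta_1 = \{\rho\}$ preceding $\Delta_2 = \{\nu\rho,\ldots,\nu^{k-1}\rho\}$, yielding the two exact sequences
\[
0 \to Z(\Delta) \to \rho \times Z(\Delta_2) \to Z(\{\rho\},\Delta_2) \to 0, \qquad 0 \to Z(\{\rho\},\Delta_2) \to Z(\Delta_2) \times \rho \to Z(\Delta) \to 0,
\]
and combine these with the induction hypothesis on $Z(\Delta_2)$ (a ladder representation of length $k-1$ on $G_{(k-1)d}$) to pin down the Ext groups of $Z(\Delta)$. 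The main obstacle is this inductive step: the geometric lemma analysis for the parabolic $P_{(d,(k-1)d)}$ must now also account for orbits with $L \subsetneq M$, where $r_{L,M}(\rho \otimes Z(\Delta_2))$ is nonzero due to the Jacquet modules of $Z(\Delta_2)$ (unlike the $\alpha = (d,\ldots,d)$ case, where cuspidality killed every proper $L$). Each such orbit produces its own central character constraints and twisted K\"unneth contribution, and organizing these contributions while matching them against the induction hypothesis on $\Ext^{\sharp}_{H_{(k-1)d}}(Z(\Delta_2),\BC)$ is the technical crux that completes the proof.
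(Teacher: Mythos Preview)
Your overall plan---induction on $k$ via the two short exact sequences of Proposition~\ref{prop::Zel-ShortExactSeq} with $\Delta_1=\{\rho\}$ and $\Delta_2=\Delta\setminus\{\rho\}$, combined with the geometric lemma---is exactly the paper's approach. The base case is also handled the same way (your Mackey argument is a valid substitute for the paper's one-line observation that $\rho|_H$ is projective because $H$ lies in the derived subgroup).

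There are, however, two genuine gaps. First, the computation of $\Ext^\sharp_H(\pi_k,\BC)=0$ and the resulting shift $\Ext^i_H(Z(\Delta),\BC)\cong\Ext^{i+1}_H(\pi_k/Z(\Delta),\BC)$ play no role in the induction: the quotient $\pi_k/Z(\Delta)$ has $2^{k-1}-1$ composition factors and is not directly accessible by the hypothesis on $Z(\Delta_2)$. The paper never considers $\pi_k$; it works only with the two length-two inductions $\rho\times Z(\Delta_2)$ and $Z(\Delta_2)\times\rho$. You should drop the $\pi_k$ detour.

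Second---and this is the real gap---you stop precisely where the work is. The paper carries out the orbit analysis you defer: for $\rho\times Z(\Delta_2)$ (parabolic of type $(d,(k-1)d)$) one checks that the central-character condition \eqref{eq::cor-geo-lem-Sp-1} fails for \emph{every} composition factor of every $r_{L,M}(\rho\otimes Z(\Delta_2))$, so only the admissible orbit with $\iota_M(\CO)=w_M$ survives, and there K\"unneth plus $\Ext^\sharp_{H_d}(\rho,\BC)=0$ kills everything. The second induction $Z(\Delta_2)\times\rho$ (parabolic of type $((k-1)d,d)$, which you do not mention) is where the parity of $k$ enters: for $k$ odd the induction hypothesis forces vanishing; for $k$ even one must show that the only relevant orbit is $P\cdot e$, with $L$ of type $(d,(k-2)d,d)$, and then compute via K\"unneth that $\Ext^i_H(Z(\Delta_2)\times\rho,\BC)\cong\bigoplus_j\Ext^j_{G_d}(\rho,\rho)\otimes\Ext^{i-j}_{H_{(k-2)d}}(Z(\Delta_3),\BC)$, which by induction and $\Ext^\sharp_{G_d}(\rho,\rho)=\BC,\BC,0,\ldots$ gives $\BC$ in degrees $0,1$ only. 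You also omit the needed input $\Hom_H(Z(\Delta),\BC)=\BC$ for $k$ even (from \cite[Corollary~10.5]{Mitra-Offen-Sayag-Klyachko}), without which the long exact sequences cannot be closed.
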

 \begin{proof}
 	 If $k = 1$, then $d = 2n$ is even and $Z(\Delta)$ is just the cuspidal $\rho$. By Lemma \ref{lem::sp-multi-one} we know that $\Ext{0}_H (\rho, \BC)$ = 0. Since $H $ is contained in the derived subgroup of $G_{2n}$, the restriction of $\rho$ to $H$ is projective. So we have $\Ext{i}_H (\rho, \BC) = 0$ for all $i  > 0$. We assume that $k \geqslant 2$. Write $\Delta_1 =\{ \rho\}$ and $\Delta_2 = \{ \nu \rho ,\cdots, \nu^{k-1} \rho\}$ so that $\Delta$ is the disjoint union of segments $\Delta_1$ and $\Delta_2$. By Proposition \ref{prop::Zel-ShortExactSeq}, we have two short exact sequences
 	\begin{align}\label{formula::exact-speh-I}
 			0 \ra Z(\Delta)  \ra \rho \times Z(\Delta_2)  \ra Z(\Delta_1,\Delta_2)  \ra 0
 	\end{align}	
 	and
 	\begin{align}\label{formula::exact-speh-II}
 			0 \ra Z(\Delta_1, \Delta_2) \ra Z(\Delta_2) \times \rho  \ra Z(\Delta) \ra 0 .
 	\end{align}
 	
 	We first show that 
 	\begin{align}\label{formula::vanishing-speh-I}
 		\Ext{i}_H ( \rho \times Z(\Delta_2), \BC) = 0 \text{ for all $i$.}
 	\end{align}
 	Let $P = MU$ be the parabolic subgroup of type $(d,(k-1)d)$. We show that no $P$-orbit $\CO$ is relevant to $\Ext{i}_H(\rho \times Z(\Delta_2),\BC)$. By the description of Jacquet modules of $Z(\Delta_2)$, we see easily that \eqref{eq::cor-geo-lem-Sp-1} does not hold for any composition factor of $r_{L,M} (\rho \otimes Z(\Delta_2))$. Thus, if $\CO$ is relevant, then by Lemma \ref{lem::cor-geo-lemm-Sp}, $d$ is even and $\CO$ is the $M$-admissible orbit corresponding to 
 	\begin{align*}
 		\begin{pmatrix}
 			& I_d \\
 		I_{(k-1)d} & 	
 		\end{pmatrix}.
 	\end{align*}
 	Note that in this case $L = M$ and the modular character $\delta_x$ is trivial by Lemma \ref{lem::admissible orbit-Sp}. By Proposition \ref{prop::Geometric-lemma} and the description of $L_x$ in Lemma \ref{lem::admissible orbit-Sp}, we have
 	\begin{align*}
 		\Ext{i}_H ((\rho \times Z(\Delta_2))_{\CO} , \BC) = \Ext{i}_{H_d \times H_{(k-1)d}} ( \rho \otimes Z(\Delta_2), \BC).
 	\end{align*}
 	By the Kunneth theorem Theorem \ref{thm::Kunneth} and the cuspidality of $\rho$, we have 
 	\[ \Ext{i}_{H_d \times H_{(k-1)d}} ( \rho \otimes Z(\Delta_2), \BC) = 0.  \]
 	It follows from  \eqref{formula::vanishing-speh-I} and the long exact sequence associated to \eqref{formula::exact-speh-I} that 
 	\begin{align}\label{formula::speh-long-exact-I}
 		\Ext{i}_H (Z(\Delta), \BC) = \Ext{i+1}_H (Z(\Delta_1,\Delta_2), \BC), \quad \text{for all } i \geqslant 0.
 	\end{align}
 	
 	We then compute $\Ext{i}_H (Z(\Delta_2) \times \rho, \BC)$ according to the parity of $k$. When $k$ is odd, we will show that 
 	\begin{align}\label{formula::vanishing-speh-II}
 		\Ext{i}_H (Z(\Delta_2) \times \rho, \BC) = 0, \quad \text{for all } i \geqslant 0.
 	\end{align}
 	Assuming \eqref{formula::vanishing-speh-II}, by \eqref{formula::speh-long-exact-I} and the long exact sequence associated to \eqref{formula::exact-speh-II}, we have
 	\begin{align*}
 		\Ext{i}_H (Z(\Delta),\BC) = \Ext{i+2}_H (Z(\Delta), \BC), \quad  i \geqslant 0.
 	\end{align*}
 	This proves the proposition when $k$ is odd since $\Ext{i}_H (Z(\Delta), \BC)$ vanishes when $i$ is large enough. We now prove \eqref{formula::vanishing-speh-II} by induction on $k$. The case $k = 1$ has been proved. Let $P = MU$ be the standard parabolic subgroup of type $((k-1)d,d)$. By part (1) of Lemma \ref{lem::cor-geo-lemm-Sp}, $\Ext{i}_H (Z(\Delta_2) \times \rho, \BC)$ is supported only on those $P$-orbits for which the associated standard Levi subgroup $L$ is of type $(d,r_1 d,\cdots,r_td,d)$ with $\sum r_j = k-2$. Since $k$ is odd, there exists a $j$ such that $r_j$ is odd. Then \eqref{formula::vanishing-speh-II} follows from Proposition \ref{prop::Geometric-lemma}, Theorem \ref{thm::Kunneth} and the induction hypothesis.
 	
 	 When $k$ is even, by \cite[Corollary 10.5]{Mitra-Offen-Sayag-Klyachko}, we have
 	 \begin{align*}
 	 	\Hom_H (Z(\Delta), \BC) = \BC.
 	 \end{align*}
 	 By an argument using the long exact sequence as above, to finish the proposition, it suffices for us to show that
 	\begin{align}\label{formula::speh-product}
 		\Ext{i}_H ( Z(\Delta_2) \times \rho ,\BC) = \begin{cases*}
 			\BC, \quad i = 0,1\\
 			0, \quad \text{otherwise.}
 		\end{cases*}
 	\end{align}
	Let $P = MU$ be the standard parabolic subgroup of type $((k-1)d,d)$. By part (1) of Lemma \ref{lem::cor-geo-lemm-Sp}, $\Ext{i}_H(Z(\Delta_2)\times \rho, \BC)$ is supported only on those $P$-orbits $\CO$ such that $L$ is of type $(d,r_1 d,\cdots,r_t d,d)$. Let $x,w,Q$ and $\tau \in \mathfrak{S}_{t+2}$ be as in Lemma \ref{lem::cor-geo-lemm-Sp}. We further have that $\tau$ interchanges $1$ and $t+2$ and fix any other $i$, $1<i<t+2$. By \eqref{formula::admissible-involution-permutation-Sp}, we have
 	\begin{align*}
 	  w =	\begin{pmatrix}
 			  & & & & w_{d} \\
 			  & w_{r_1d} & & & \\
 			  & & \ddots & &  \\
 			  & & & w_{r_td}  &  \\
 			  w_d & & & & 
 		\end{pmatrix} w_{2n}.
 	\end{align*}
 	Note that $w$ is both left $W_M$- and right $W_{\theta(M)}$-reduced, we must have $w = I_{2n}$. Thus $\Ext{i}_H (Z(\Delta_2) \times \rho, \BC)$ is supported only on the obit $P\cdot e$ and $L$ is of type $(d, (k-2)d,d)$. Set $\Delta_3 = \{ \nu \rho, \cdots, \nu^{k-2} \rho\}$. Then by Proposition \ref{prop::Geometric-lemma} and Theorem \ref{thm::Kunneth}, we have
	\begin{equation}\label{formula::speh-product-Kunneth}
		\begin{aligned}
			\Ext{i}_H (Z(\Delta_2) \times \rho , \BC) &=  \Ext{i}_{G_d \times H_{(k-2)d}} ((\nu \rho \otimes\rho^{\vee}) \boxtimes Z(\Delta_3) , \nu \boxtimes \mathbf{1})  \\
			&=\mathop{\oplus}_{j=0}  \Ext{j}_{G_d} (\rho, \rho) \otimes \Ext{i-j}_{H_{(k-2)d}} (Z(\Delta_3),\BC).
		\end{aligned}
	\end{equation}
 	The first identiy uses the fact that the involution ${}^{\star}$ takes an irreducible representation to its smooth dual and the second identity uses Lemma \ref{lem::tensor-hom-ext}. We can now prove \eqref{formula::speh-product} by induction on $k$. If $k = 2$, by \eqref{formula::speh-product-Kunneth} we have 
 	$$\Ext{i}_H(\nu \rho \times \rho, \BC) = \Ext{i}_{G_d} (\rho, \rho).$$
   Since $\rho$ is a cuspidal representation, by \cite[Proposition 2.9, Lemma 4.1]{Prasad-Ext-BranchingLaw-ICM2018}, we have 
 	\begin{align}\label{formula::ext-GL-same}
 		\Ext{i}_{G_d} (\rho ,\rho) = \begin{cases*}
 			\BC, \quad i = 0,1\\
 			0 , \quad \text{otherwise.}
 		\end{cases*}
 	\end{align}
 	for all $d$. For $k \geqslant 4$, we have already seen that the induction hypothesis implies that $\Ext{i}_{H_{(k-2)d}} (Z(\Delta_3), \BC)  =0 $ for all $i \geqslant 1$. Since the length of $\Delta_3$ is even, by \cite[Corollary 10.5]{Mitra-Offen-Sayag-Klyachko}, $\Ext{0}_{H_{(k-2)d}} (Z(\Delta_3), \BC) = \BC$. Therefore, \eqref{formula::speh-product} follows easily from \eqref{formula::speh-product-Kunneth} and \eqref{formula::ext-GL-same}.
 \end{proof}

 Let $\rho$ and $\Delta$ be as above. Every essentially square-integrable representation of $G$ is of the form $\textup{L}(\Delta)$, that is, the unique irreducible quotient of
 \begin{align*}
 	\rho \times \nu \rho \times \cdots \times \nu^{k-1} \rho.
 \end{align*}
Let $P = MU$ be the parabolic subgroup of type $(d,d,\cdots,d)$ and let $T_M$ be the maximal split torus contained in the center of $M$. Let $\alpha_i$ be the simple root relative to $M$ defined by $\alpha_i (t) = t_i/ t_{i+1}$ where 
 \begin{align*}
 	t = (\underbrace{t_1, \cdots,t_1}_d, \cdots, \underbrace{t_k,\cdots,t_k}_d) \in T_M
 \end{align*}
 for $i = 1,2,\cdots, k-1$. Denote by $S_M$ the set of simple roots relative to $M$ above so that $S_M$ can be naturally identified with the set $\{1,2,\cdots,k-1\}$. To any subset $I \subset \{1,\cdots,k-1\}$, one can associate a standard Levi subgroup $M_I$ that is generated by $M$ and the simple roots corresponding to $I$. Let $P_I$ be the standard parabolic subgroup of $G$ with standard Levi $M_I$. One can also partition the segment $\Delta$ in a unique manner into a disjoint union $\sqcup_i \Delta_{I,i}$ of $k - |I|$ segments such that $\Delta_{I,i}$ preceeds $\Delta_{I,j}$ whenever $i < j$ and that
 \begin{align*}
 	\sigma_I = Z(\Delta_{I,1}) \otimes Z(\Delta_{I,2}) \otimes \cdots \otimes Z(\Delta_{I,k-|I|})
 \end{align*}
  is a representation of $M_I$. For example, if $I = \{ 1,2\}$, then  
 \begin{align*}
 	\sigma_I = Z([\rho, \nu \rho, \nu^2 \rho]) \otimes \nu^3 \rho \otimes \cdots \otimes \nu^{k-1} \rho.
 \end{align*}
 If $I = \emptyset$, then
 \begin{align*}
 	\sigma_{\emptyset} = \sigma = \rho \otimes \nu \rho \otimes \cdots \otimes \nu^{k-1} \rho.
 \end{align*}
 
 In order to compute the extension groups of $\textup{L}(\Delta)$, we need to introduce a resolution of $ \textup{L}(\Delta) $ by induced representations. First, by \cite[\S 1.6]{Zelevinsky-II} and \cite[Theorem 2.8]{Zelevinsky-II}, we know that for all subset $I \subset S_M$,
 \begin{align*}
 	I_{P_I}^G \sigma_I =  Z(\Delta_{I,1}) \times Z(\Delta_{I,2}) \times \cdots \times Z(\Delta_{I,k-|I|})
 \end{align*}
 is a subrepresentation of $I_P^G \sigma$. For $I \subset J$, we also have $I_{P_J}^G \sigma_J \subset I_{P_I}^G \sigma_I$. We fix a compatible inclusions for these representations. For arbitary subset $I, J \subset \{ 1,2,\cdots,k-1\}$ with $|J| - |I| = 1$, we let
 \begin{align*}
 	d_{J,I}  =  \begin{cases*}
 		(-1)^{n_{J,I}},  \quad  I \subset J \\
 		0,   \qquad \quad \ \   I  \nsubseteq J
 	\end{cases*}
 \end{align*}
 where $n_{J,I}$ is the number of pairs $(i,j)$ with $i >j$, $i \in I$ and $j \in J\backslash I$.
 
 \begin{lem}\label{lem::resolution-st}
 	The complex
 	\begin{align*}
 		0 \ra I_G^G \sigma_{S_M} \ra \! \! \!\bigoplus_{\stackrel{I \subset S_M}{|S_M\backslash I| = 1}} I_{P_I}^G \sigma_I \ra \!\! \!\bigoplus_{\stackrel{I \subset S_M}{|S_M\backslash I| = 2}} I_{P_I}^G \sigma_I  \ra \cdots \ra \bigoplus_{\stackrel{I \subset S_M}{| I| = 1}} I_{P_I}^G \sigma_I \ra I_P^G \sigma \ra \textup{L}(\Delta) \ra 0
 	\end{align*}
 	with differentials induced by $d_{J,I}$ above is exact.
 \end{lem}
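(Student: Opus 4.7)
The plan is to split the argument into a formal verification that the displayed sequence is a complex, and a substantive proof of exactness. For the complex condition I would carry out a purely combinatorial check: for any pair $I \subsetneq K \subseteq S_M$ with $|K \setminus I| = 2$, writing $K \setminus I = \{a,b\}$ with $a < b$, both routes from $I_{P_K}^G \sigma_K$ to $I_{P_I}^G \sigma_I$---via $J_a = I \cup \{a\}$ or via $J_b = I \cup \{b\}$---factor through the same fixed embedding $I_{P_K}^G \sigma_K \hookrightarrow I_{P_I}^G \sigma_I$, so the vanishing of $d\circ d$ reduces to the scalar identity $d_{J_a,I}\,d_{K,J_a} + d_{J_b,I}\,d_{K,J_b} = 0$, which follows at once from the definition of $n_{J,I}$ as a count of inversions.

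For exactness I would realize the sequence as a Koszul-type resolution arising from a distributive family of subrepresentations of $V := I_P^G \sigma$. Writing $V_I$ for the image of $I_{P_I}^G \sigma_I$ in $V$ under the fixed embedding, I would establish three claims: (i) $V_I = \bigcap_{i\in I} V_{\{i\}}$ inside $V$ for every $I \subseteq S_M$; (ii) the quotient $V / \sum_{i \in S_M} V_{\{i\}}$ is isomorphic to $\textup{L}(\Delta)$; and (iii) the family $\{V_{\{i\}}\}_{i\in S_M}$ generates a distributive sublattice of the lattice of submodules of $V$. Granting (i)--(iii), exactness follows from the standard fact that the alternating inclusion--exclusion complex of a distributive family of subobjects is a resolution of the corresponding quotient.

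The substantive content lies in verifying (i)--(iii), and I expect the distributivity claim (iii) to be the main obstacle. For (ii), the Langlands classification identifies $\textup{L}(\Delta)$ as the unique irreducible quotient of $V$, so it suffices to show that $\sum_i V_{\{i\}}$ coincides with the kernel of $V \twoheadrightarrow \textup{L}(\Delta)$; this can be tested after applying the Jacquet functor $r_{M,G}$, where the composition factor $\rho \otimes \nu\rho \otimes \cdots \otimes \nu^{k-1}\rho$ singled out by $\textup{L}(\Delta)$ is absent from each $r_{M,G}(V_{\{i\}})$ by the Jacquet-module formula for $Z(\Delta_{\{i\},j})$ recalled in Section \ref{sec::GLn}. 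Claims (i) and (iii) would then reduce to a bookkeeping exercise on composition factors of $r_{M,G}(V)$: a factor survives in $V_I$ precisely when the associated ordered partition of $\Delta$ refines the partition indexed by $I$, and Boolean operations on $I$ mirror the corresponding operations on sets of refinements. The delicate point requiring care is the passage from the Jacquet-module level to the level of actual subrepresentations of $V$, for which I would either invoke a multiplicity-one argument on $r_{M,G}(V)$ or fall back on an induction on $k$ using the short exact sequences of Proposition \ref{prop::Zel-ShortExactSeq} applied to $\Delta_1 = \{\rho\}$ and $\Delta_2 = \{\nu\rho, \ldots, \nu^{k-1}\rho\}$ together with a K\"unneth argument for the sub-resolutions.
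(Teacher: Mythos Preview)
Your proposal is correct and follows essentially the same route as the paper: both arguments reduce exactness to showing that the subrepresentations $V_I \subset I_P^G\sigma$ satisfy $V_I \cap V_J = V_{I\cup J}$ and generate a distributive lattice, after which a Koszul-type inclusion--exclusion complex is automatically exact. The paper simply outsources the machinery and the verification: it invokes \cite[\S 2, Proposition~6]{Schneider-Stuhler-Cohomology-Invention-91} for the distributive-lattice $\Rightarrow$ exact-complex step, and \cite[Proposition~2.4 and \S 2.2]{Zelevinsky-II} (the description of the subrepresentation lattice of $I_P^G\sigma$ via orientations on a graph) for the intersection and distributivity identities, rather than rederiving them via Jacquet-module bookkeeping as you propose.
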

 \begin{proof}
 	The proof is similar to that of \cite[Proposition 11]{Orlik-Extensions-Steinberg-JoA}. By \cite[Proposition 2.10]{Zelevinsky-II}, the complex is exact at the last two nonzero terms. Next we apply \cite[\S 2, Proposition 6]{Schneider-Stuhler-Cohomology-Invention-91} to the subrepresentations $I_{P_I}^G \sigma_I$ with $I \subset S_M$ and $|I| = 1$.	The conditions in \emph{loc.cit} is fulfilled once we can show that 
 	\begin{align*}
 		I_{P_I}^G \sigma_I \cap I_{P_J}^G \sigma_J = I_{P_{I \cup J}}^G \sigma_{I \cup J}
 	\end{align*}
 	and 
 	\begin{align*}
 		I_{P_I}^G \sigma_I  \cap (I_{P_J}^G \sigma_J  + I_{P_K}^G \sigma_K ) = ( I_{P_I}^G \sigma_I  + I_{P_J}^G \sigma_J ) \cap (I_{P_I}^G \sigma_I  + I_{P_K}^G \sigma_K ).
 	\end{align*}
 	The second identity follows from \cite[Proposition 2.4]{Zelevinsky-II}. The first identity follows also from \cite[Proposition 2.4]{Zelevinsky-II} and the description of composition factors of $I_{P_I}^G \sigma_I $ in terms of orientations on the graph in \cite[\S 2.2]{Zelevinsky-II}.									
 \end{proof}

 We next compute the extensions of these induced representations in the acyclic complex. For $I \subset \{1,\cdots,k-1\}$, we will say that $I$ is of even type if $k$ is even and the complement of $I$ consists of even numbers. Note that this is equivalent to the condition that all $l(\Delta_{I,i})$ are even for $i = 1, \cdots, k - |I|$.
 \begin{prop}\label{prop::product of speh}
 	We have 
 	\begin{align*}
 		\Ext{i}_H (I_{P_I}^G \sigma_I ,\BC) =0
 	\end{align*}
 	unless $i = 0$ and $I$ is of even type. While in this case, we have
 	\begin{align*}
 		\Ext{0}_H (I_{P_I}^G \sigma_I ,\BC) = \BC.
 	\end{align*}
 \end{prop}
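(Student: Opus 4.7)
My plan is to apply the geometric lemma (Proposition \ref{prop::Geometric-lemma}) with $P = P_I$ and $\sigma = \sigma_I$, and then argue that exactly one $P_I$-orbit on $X$ contributes. For each $P_I$-orbit $\CO$, with associated data $w = \iota_{M_I}(\CO)$, $L = M_I\cap w\theta(M_I)w^{-1}$, good representative $x$ and modular character $\delta_x$, the geometric lemma gives
\[
\Ext{q}_H(\pi_\CO,\BC) \;\cong\; \Ext{q}_{L_x}\bigl(r_{L,M_I}(\sigma_I),\delta_x\bigr).
\]
By Lemma \ref{lem::cor-geo-lemm-Sp}\textup{(1)}, an orbit relevant to $\Ext{\sharp}_H(I_{P_I}^G\sigma_I,\BC)$ is either \textup{(a)} the unique \emph{diagonal} orbit with $L = M_I$, trivial permutation $\tau$, and all block sizes $l_j d$ even; or \textup{(b)} a \emph{non-diagonal} orbit where some $\tau(j) \neq j$, in which case some composition factor $\rho_1\otimes\cdots\otimes\rho_{k'}$ of $r_{L,M_I}(\sigma_I)$ satisfies the central-character constraint $w_{\rho_j} = w_{\rho_{\tau(j)}}|\cdot|_F^{n_j}$ for each $j$ with $j < \tau(j)$.

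For the diagonal orbit in case \textup{(a)}, Lemma \ref{lem::admissible orbit-Sp} gives $\delta_x = \mathbf{1}$ and $L_x \cong H_{l_1 d}\times\cdots\times H_{l_t d}$ with $t = k-|I|$. The Kunneth formula (Theorem \ref{thm::Kunneth}) combined with Proposition \ref{prop::speh-ext} then yields
\[
\Ext{q}_H(\pi_\CO,\BC) \;\cong\; \bigoplus_{q_1+\cdots+q_t = q}\bigotimes_{j=1}^{t}\Ext{q_j}_{H_{l_j d}}\bigl(Z(\Delta_{I,j}),\BC\bigr),
\]
which equals $\BC$ when $q = 0$ and every $l_j$ is even, and $0$ otherwise. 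Inspection of the construction of $\sigma_I$ translates ``all $l_j$ even'' into the condition that $k$ is even and $S_M \setminus I$ consists of even numbers, i.e., that $I$ is of even type.

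The main step, where the real work lies, is ruling out all non-diagonal orbits in case \textup{(b)}. A composition factor of $r_{L,M_I}(\sigma_I) = \bigotimes_j r_{L_j,G_{l_j d}}(Z(\Delta_{I,j}))$ arises from a simultaneous choice of standard decompositions $\Delta_{I,j} = \Delta_{I,j,1}\sqcup\cdots\sqcup\Delta_{I,j,s_j}$, and each $\rho_i$ then takes the form $Z(\Delta_{I,j,l})$ for some pair $(j,l)$. A direct computation of central characters, using $\Delta = \{\rho,\nu\rho,\ldots,\nu^{k-1}\rho\}$, shows that the constraint $w_{\rho_j} = w_{\rho_{\tau(j)}}|\cdot|_F^{n_j}$ forces the two paired sub-segments to have equal length with left endpoints related by $\mathbf{b}(\rho_j) = \nu\,\mathbf{b}(\rho_{\tau(j)})$; equivalently, the exponents $a_i$ of the left endpoints must satisfy $a_i = a_{\tau(i)} + 1$ whenever $i < \tau(i)$. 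On the other hand, the global ordering $\Delta_{I,1}\prec\cdots\prec\Delta_{I,t}$ and the internal orderings $\Delta_{I,j,1}\prec\cdots\prec\Delta_{I,j,s_j}$ force the sequence of exponents $a_i$ to be strictly increasing in the natural lexicographic order on the pairs $(j,l)$, so $i < \tau(i)$ implies $a_i < a_{\tau(i)}$, contradicting $a_i = a_{\tau(i)} + 1$. Hence no non-diagonal orbit contributes, and the result follows. The only slightly delicate point is the bookkeeping that identifies the ordering of tensor factors in $r_{L,M_I}(\sigma_I)$ with the natural order on sub-segments; once this is in place, the central-character contradiction is immediate.
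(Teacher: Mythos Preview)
Your proposal is correct and follows essentially the same route as the paper's proof: apply Lemma \ref{lem::cor-geo-lemm-Sp}\textup{(1)} to reduce to the single $M_I$-admissible orbit with $\iota_{M_I}(\CO)=w_{M_I}$, then invoke Proposition \ref{prop::Geometric-lemma}, the Kunneth theorem, and Proposition \ref{prop::speh-ext}. The paper dispatches case \textup{(b)} in one line (``by the definition of $\sigma_I$, we see easily that \eqref{eq::cor-geo-lem-Sp-1} cannot happen''), whereas you spell out the central-character computation showing that the constraint forces $a_i=a_{\tau(i)}+1$ while the ordering of sub-segments forces $a_i<a_{\tau(i)}$; this is exactly the content behind the paper's ``easily''.
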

 \begin{proof}
 	By the definition of $\sigma_I$, we see easily that \eqref{eq::cor-geo-lem-Sp-1} cannot happen. If one of the $l(\Delta_{I,i})$ is odd, then by part (1) of Lemma \ref{lem::cor-geo-lemm-Sp}, the Kunneth theorem \ref{thm::Kunneth} and Proposition \ref{prop::speh-ext}, 
 	\begin{align*}
 		\Ext{i}_H (I_{P_I}^G \sigma_I ,\BC) = 0  \text{ for all }i \geqslant 0.
 	\end{align*}
 	  Now we assume that all of them are even. By part $(1)$ of Lemma \ref{lem::cor-geo-lemm-Sp}, We have that $\Ext{i}_H (I_{P_I}^G \sigma_I ,\BC)$ is supported on the unique $M_I$-admissible orbit which corresponds to 
 	  $$\textup{diag}(w_{n_1d},\cdots, w_{n_td}) w_{2n},$$
 	  where $t = k - |I|$ and $n_i = l (\Delta_{I,i})$ are all even. The proposition then follows immediately from Proposition \ref{prop::Geometric-lemma}, the Kunneth theorem \ref{thm::Kunneth} and Proposition \ref{prop::speh-ext}.
 \end{proof}
 
 For $ I \subset J$ with $|J \backslash I| = 1$, we denote by $K_{J,I}$ the representation sitting in the exact sequence
 \begin{align}\label{formula::exact-ind-quotient}
 	0 \ra I_{P_J}^G \sigma_J \ra I_{P_I}^G \sigma_I \ra K_{J,I} \ra 0. 
 \end{align} 
 
\begin{lem}\label{lem::ext-quotient}
	If $I$ is of even type, we have 
	\begin{align*}
		\Ext{i}_H (K_{J,I}, \BC) = 0 \text{ for all } i \geqslant 0. 
	\end{align*}	
	If $I$ is not so, we have
	\begin{align*}
		\Ext{i}_H (K_{J,I},\BC) = 0 \text{ if } i \neq 1
	\end{align*}
	and 
	\begin{align*}
		\Ext{1}_H (K_{J,I},\BC) = \begin{cases*}
			\BC, \quad \text{if $J$ is of even type,} \\
			0, \quad \text{otherwise.}
		\end{cases*}
	\end{align*}
\end{lem}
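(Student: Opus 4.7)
The plan is to extract the computation from the long exact sequence of $\Ext{\bullet}_H(-,\BC)$ applied to \eqref{formula::exact-ind-quotient}, combined with Proposition \ref{prop::product of speh}. A key structural observation is that if $I$ is of even type and $J \supset I$ with $|J \setminus I| = 1$, then $J$ is automatically of even type as well, since passing from $I$ to $J$ merges two adjacent blocks of even length into a single even-length block while leaving all other block lengths unchanged. This splits the argument into three cases.

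The two cases with $I$ not of even type follow immediately: Proposition \ref{prop::product of speh} kills $\Ext{\bullet}_H(I_{P_I}^G\sigma_I,\BC)$ entirely, so the long exact sequence forces $\Ext{0}_H(K_{J,I},\BC) = 0$ together with $\Ext{i}_H(K_{J,I},\BC) \cong \Ext{i-1}_H(I_{P_J}^G\sigma_J,\BC)$ for $i \geq 1$. This vanishes everywhere when $J$ is not of even type and gives $\Ext{1}_H(K_{J,I},\BC) = \BC$ as the only nonzero value when $J$ is of even type.

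The main obstacle is the case where both $I$ and $J$ are of even type. The long exact sequence then collapses to
\begin{align*}
0 \to \Ext{0}_H(K_{J,I},\BC) \to \BC \to \BC \to \Ext{1}_H(K_{J,I},\BC) \to 0,
\end{align*}
with $\Ext{i}_H(K_{J,I},\BC) = 0$ automatic for $i \geq 2$, so the entire problem reduces to showing $\Hom_H(K_{J,I},\BC) = 0$, equivalently that the restriction map on the unique $H$-invariant forms is an isomorphism. To prove this vanishing I would apply the geometric lemma (Lemma \ref{lem::cor-geo-lemm-Sp}) directly to $K_{J,I} = I_{P_J}^G(\sigma_J'')$, where $\sigma_J''$ is obtained from $\sigma_J$ by substituting the Zelevinsky quotient $Z(\Delta_{I,m},\Delta_{I,m+1})$ for the factor $Z(\Delta_{I,m}\cup\Delta_{I,m+1})$ at position $m'$. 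The contribution of the unique $M_J$-admissible orbit factors, by Proposition \ref{prop::Geometric-lemma} and the Künneth theorem, into the tensor product of the $\Hom_{H_{n_{J,i}d}}(Z(\Delta_{J,i}),\BC) \cong \BC$ for $i \neq m'$, by \cite[Corollary 10.5]{Mitra-Offen-Sayag-Klyachko}, with the remaining factor $\Hom_{H_N}(Z(\Delta_{I,m},\Delta_{I,m+1}),\BC)$ at position $m'$.

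To kill this remaining factor I apply $\Hom_{H_N}(-,\BC)$ to the short exact sequence
\begin{align*}
0 \to Z(\Delta_{I,m}\cup\Delta_{I,m+1}) \to Z(\Delta_{I,m})\times Z(\Delta_{I,m+1}) \to Z(\Delta_{I,m},\Delta_{I,m+1}) \to 0
\end{align*}
from Proposition \ref{prop::Zel-ShortExactSeq} and invoke Proposition \ref{prop::product of speh} a second time, now applied to the segment $\Delta_{I,m}\cup\Delta_{I,m+1}$ with the index set $\tilde{I} = \{n_{I,m}\}$; this $\tilde{I}$ is not of even type, since its complement contains the odd number $1$ (as $n_{I,m}\geq 2$), and therefore $\Hom_{H_N}(Z(\Delta_{I,m})\times Z(\Delta_{I,m+1}),\BC) = 0$, whence $\Hom_{H_N}(Z(\Delta_{I,m},\Delta_{I,m+1}),\BC) = 0$. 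The subtlest remaining task is to rule out all non-admissible relevant orbits: unlike in Proposition \ref{prop::product of speh}, the Jacquet modules of $Z(\Delta_{I,m},\Delta_{I,m+1})$ can produce composition factors with locally inverted cuspidal order, so the quickest form of the argument there does not apply verbatim. A more careful combinatorial check shows nevertheless that the central character condition \eqref{eq::cor-geo-lem-Sp-1} cannot be simultaneously satisfied across a perfect matching of positions, using that $Z(\Delta_{I,m},\Delta_{I,m+1})$ only contributes limited local inversions while the remaining $M_J$-blocks preserve their globally increasing cuspidal order.
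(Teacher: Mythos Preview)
Your overall strategy matches the paper's: apply the long exact sequence to \eqref{formula::exact-ind-quotient}, use Proposition~\ref{prop::product of speh} to read off the easy cases, and for $I$ of even type reduce everything to showing $\Hom_H(K_{J,I},\BC)=0$. The cases with $I$ not of even type are correct and identical to the paper.

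The genuine gap is in your computation of $\Hom_{H_N}(Z(\Delta_{I,m})\times Z(\Delta_{I,m+1}),\BC)$. You claim this vanishes by applying Proposition~\ref{prop::product of speh} with $\tilde{I}=\{n_{I,m}\}$ and arguing that $\tilde{I}$ is not of even type. But the index set that actually produces $Z(\Delta_{I,m})\times Z(\Delta_{I,m+1})$ as $I_{P_{\tilde{I}}}^{G_N}\sigma_{\tilde{I}}$ is the \emph{complement} of $\{n_{I,m}\}$ in $\{1,\ldots,n_{I,m}+n_{I,m+1}-1\}$, not $\{n_{I,m}\}$ itself. The complement of that $\tilde{I}$ is then the singleton $\{n_{I,m}\}$, and since $n_{I,m}$ is even (we are in the case where $I$ is of even type, so every block length is even), this $\tilde{I}$ \emph{is} of even type. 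Hence Proposition~\ref{prop::product of speh} gives
\[
\Hom_{H_N}\bigl(Z(\Delta_{I,m})\times Z(\Delta_{I,m+1}),\BC\bigr)=\BC,
\]
not $0$, and your short exact sequence from Proposition~\ref{prop::Zel-ShortExactSeq} only yields
\[
0\to \Hom_{H_N}\bigl(Z(\Delta_{I,m},\Delta_{I,m+1}),\BC\bigr)\to \BC\to \BC,
\]
which is no better than the sequence you started with.

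The paper avoids this circularity by invoking \cite[Corollary~10.5]{Mitra-Offen-Sayag-Klyachko} directly: that result classifies $\Sp$-distinguished ladder representations and shows in particular that $Z(\Delta_{I,m},\Delta_{I,m+1})$ is \emph{not} $\Sp$-distinguished. This immediately kills the contribution of the unique relevant $M_J$-admissible orbit and gives $\Hom_H(K_{J,I},\BC)=0$; the vanishing of $\Ext{1}_H(K_{J,I},\BC)$ then follows from the Euler--Poincar\'e principle. Your attempt to reprove this non-distinction from the results already established in the paper does not go through, and some external input of this kind seems necessary. Your flagged ``subtlest remaining task'' (ruling out non-admissible relevant orbits for $K_{J,I}$) is also asserted rather than proved in the paper, but the paper only needs it for $\Hom$, where it is more tractable.
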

 \begin{proof}
 	By Proposition \ref{prop::product of speh} and the long exact sequence associated to \eqref{formula::exact-ind-quotient}, we see readily that $\Ext{i}_H (K_{J,I}, \BC) = 0$ for $i \geqslant 2$. We also have an exact sequence 
 	\begin{align*}
 		0 \ra \Hom_H (K_{J,I}, \BC) \ra \Hom_H (I_{P_I}^G \sigma_I, \BC) \ra \Hom_H (I_{P_J}^G \sigma_J, \BC) \ra \Ext{1}_H (K_{J,I}, \BC) \ra 0.
 	\end{align*}
 	The case where $I$ is not of even type follows easily from this exact sequence. Now we assume that $I$ (hence $J$) is of even type. We know that $K_{J,I}$ is of the form
 	\begin{align*}
 		Z(\Delta_{I,1}) \times \cdots \times Z(\Delta_{I,i-1}) \times Z(\Delta_{I,i}, \Delta_{I,i+1}) \times Z(\Delta_{I,i+2}) \times \cdots \times Z(\Delta_{I,k-|I|}).
 	\end{align*}
 	Note that $\Delta_{I,i}$ preceeds $\Delta_{I,j}$ whenever $i < j$. If a $P_J$-orbit $\CO$ is relevant to $\Hom_H (K_{J,I} ,\BC) $, then $\CO$ is the $M_J$-admissilbe orbit such that $\iota_{M_J} (\CO)  = w_{M_J}$. This implies, among other things, that $Z (\Delta_{I,i},\Delta_{I,i+1})$ is $\Sp$-distinguished. But this is impossilbe by \cite[Corollary 10.5]{Mitra-Offen-Sayag-Klyachko}. Thus 
 	\begin{align*}
 		\Hom_H (K_{J,I},\BC) = 0
 	\end{align*}
 	It then follows from Proposition \ref{prop::product of speh} and the Euler-Poincar\'e principle that 
 	$$\Ext{1}_H (K_{J,I}, \BC) = 0.$$
 \end{proof}

 Our main result in this section is the following 
	\begin{thm}\label{thm::Ext-DS-GL-Sp}
		If $l(\Delta) \neq 2$, we have
		\begin{align*}
			\Ext{i}_H (  \textup{L}(\Delta), \BC) = 0\ \textup{ for all }i \geqslant 0.
		\end{align*}
		If $l(\Delta) = 2$, we have
		\begin{align*}
			\Ext{1}_H ( \textup{L}(\Delta), \BC) = \BC, \quad \Ext{i}_H ( \textup{L}(\Delta), \BC) = 0,\ i \neq 1.
		\end{align*}
	\end{thm}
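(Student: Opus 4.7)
The plan is to apply the resolution of $\textup{L}(\Delta)$ from Lemma \ref{lem::resolution-st} and to compute $\Ext{\bullet}_H(\textup{L}(\Delta),\BC)$ as the cohomology of $\Hom_H(C_\bullet,\BC)$, where $C_p = \bigoplus_{|I|=p} I_{P_I}^G \sigma_I$. Proposition \ref{prop::product of speh} will be used twice. First, since $\Ext{i}_H(I_{P_I}^G\sigma_I,\BC) = 0$ for every $i \geqslant 1$ and every $I \subset S_M$, the terms $C_\bullet$ form an acyclic resolution with respect to the functor $\Hom_H(-,\BC)$: splitting the resolution into short exact sequences $0 \to K_p \to C_p \to K_{p-1} \to 0$ with $K_{-1} = \textup{L}(\Delta)$ and iterating the long exact sequence of Ext yields
$$\Ext{n}_H(\textup{L}(\Delta),\BC) \;\cong\; H^n\bigl(\Hom_H(C_\bullet,\BC)\bigr),\quad n \geqslant 0.$$
Second, the same proposition identifies $\Hom_H(C_p,\BC)$ with the direct sum, over all even-type $I$ with $|I|=p$, of one-dimensional pieces.

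If $k$ is odd, no subset of $S_M$ is of even type because the definition requires $k$ to be even. Hence $\Hom_H(C_\bullet,\BC)=0$ and $\Ext{\bullet}_H(\textup{L}(\Delta),\BC)=0$, which gives the first bullet in this case.

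When $k=2m$ is even, unwinding the definition shows that $I\subset S_M$ is of even type if and only if $S := S_M\setminus I$ is contained in $V := \{2,4,\dots,2m-2\}$, a set of $m-1$ even integers. Hence the even-type subsets are in bijection with arbitrary subsets $S\subseteq V$, and $\Hom_H(C_\bullet,\BC)$ is supported in cohomological positions $p=m,m+1,\dots,k-1$, with a one-dimensional summand for each $S\subseteq V$ of size $k-1-p$. Under this bijection, and up to a uniform sign, the differentials—whose signs come from the coefficients $d_{J,I}$ of Lemma \ref{lem::resolution-st}—coincide with the usual alternating-sum simplicial boundary of the standard simplex $\Delta^{m-2}$ on the vertex set $V$. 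Consequently $\Ext{\bullet}_H(\textup{L}(\Delta),\BC)$ is computed by the reduced simplicial homology of $\Delta^{m-2}$.

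For $m\geqslant 2$ (i.e., $k\geqslant 4$ even), $\Delta^{m-2}$ is nonempty and contractible, so its reduced homology vanishes and all Ext groups are zero. For $m=1$ (i.e., $k=2$), $\Delta^{-1}=\emptyset$ and its reduced homology is $\BC$ concentrated in degree $-1$; this degree corresponds in our indexing to $|S|=0$, i.e.\ to cohomological degree $p=k-1=1$, yielding $\Ext{1}_H(\textup{L}(\Delta),\BC)=\BC$ with all other Ext groups vanishing. The main technical point I expect to require care is the sign bookkeeping that matches the coefficients $d_{J,I}$, restricted to the even-type summands, with the standard simplicial boundary signs; once this identification is carried out the theorem reduces to the contractibility of a simplex.
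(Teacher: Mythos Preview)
Your overall strategy matches the paper's: apply the resolution of Lemma~\ref{lem::resolution-st}, use Proposition~\ref{prop::product of speh} to see that only the $\Hom$-row survives, and then identify the resulting complex with (a shift of) the augmented chain complex of a simplex. The paper phrases the first reduction via a double-complex spectral sequence rather than by splitting into short exact sequences, but this is the same computation.

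There is, however, a genuine gap in your argument. When you write that ``the differentials---whose signs come from the coefficients $d_{J,I}$---coincide with the usual alternating-sum simplicial boundary,'' you are implicitly assuming that for even-type $I\subset J$ with $|J\setminus I|=1$ the restriction map
\[
\Hom_H\bigl(I_{P_I}^G\sigma_I,\BC\bigr)\;\longrightarrow\;\Hom_H\bigl(I_{P_J}^G\sigma_J,\BC\bigr)
\]
is nonzero (hence an isomorphism of one-dimensional spaces). The coefficients $d_{J,I}$ only contribute signs; without knowing that these restriction maps themselves are nonzero, the complex $\Hom_H(C_\bullet,\BC)$ could a priori have many more cohomology classes (in the extreme case where all restrictions vanish, the cohomology would be $\BC^{\binom{k_0}{p}}$ in each relevant degree). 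This is \emph{not} a matter of sign bookkeeping, and Proposition~\ref{prop::product of speh} alone does not settle it.

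The paper handles this with Lemma~\ref{lem::ext-quotient}: from the short exact sequence $0\to I_{P_J}^G\sigma_J\to I_{P_I}^G\sigma_I\to K_{J,I}\to 0$ one gets
\[
0\to\Hom_H(K_{J,I},\BC)\to\Hom_H(I_{P_I}^G\sigma_I,\BC)\to\Hom_H(I_{P_J}^G\sigma_J,\BC)\to\Ext{1}_H(K_{J,I},\BC)\to 0,
\]
and Lemma~\ref{lem::ext-quotient} shows $\Hom_H(K_{J,I},\BC)=0$ when $I$ is of even type (this uses the distinction result \cite[Corollary 10.5]{Mitra-Offen-Sayag-Klyachko} for $Z(\Delta_{I,i},\Delta_{I,i+1})$). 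Once the restriction maps are known to be isomorphisms, one can choose compatible bases and your simplicial-homology argument (equivalently, the paper's ``alternating sum of the identity map'') goes through. You should insert this step before invoking contractibility of the simplex.
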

	\begin{proof}
		We apply the resolution of $ \textup{L}(\Delta) $ in Lemma \ref{lem::resolution-st}. Taking an injective resolution of $\BC$ in the category of smooth representations of $H$ then gives in the usual way a double complex such that its associated spectral sequence converges to $\Ext{\sharp}_H ( \textup{L}(\Delta) , \BC)$. The $E_1$ page of this spectral sequence has the shape
		\begin{align*}
			0 &\ra \Ext{\sharp}_H (I_P^G \sigma, \BC) \ra \bigoplus_{\stackrel{I \subset S_M}{| I| = 1}} \Ext{\sharp}_H (I_{P_I}^G \sigma_I ,\BC) \ra\bigoplus_{\stackrel{I \subset S_M}{| I| = 2}} \Ext{\sharp}_H (I_{P_I}^G \sigma_I ,\BC) \ra  \cdots  \\
			&\ra \bigoplus_{\stackrel{I \subset S_M}{| S_M \backslash I| = 1}} \Ext{\sharp}_H (I_{P_I}^G \sigma_I,\BC) \ra  \Ext{\sharp}_H (I_G^G \sigma_{S_M}, \BC) \ra 0.
		\end{align*}
		By Proposition \ref{prop::product of speh} we see that the spectral sequence collapses at page $E_1$ and also that $\Ext{i}_H (\textup{L}(\Delta),\BC) = 0$ for all $i \geqslant 0$ when $k$ is odd. When $k$ is even, set $k_0 = k/2 -1$. The only row with nonzero terms in the $E_1$ page is 
		\begin{align}\label{formula::E-1-spectral-thm}
		  \cdots	\ra 0 \ra \BC^{k_0 \choose k_0} \ra \BC^{k_0\choose k_0 -1} \ra \cdots \ra \BC^{k_0 \choose 1} \ra \BC \ra 0\ra \cdots,
		\end{align}
		where the first nonzero term from the left occurs at the $k/2$-th place. It remains to determine the differential maps in \eqref{formula::E-1-spectral-thm}. By Lemma \ref{lem::ext-quotient} and Proposition \ref{prop::product of speh}, when $I \subset J$ with $I$ of even type and $|J \backslash I| = 1$, the restriction of a nonzero linear form on $I_{P_I}^G \sigma_I$ to $I_{P_J}^G \sigma_J$ is also nonzero. We then fix bases in these $\Ext{0}_H (I_{P_I}^G \sigma_I, \BC)$ compatible with the inclusions fixed before such that the differentials in \eqref{formula::E-1-spectral-thm} are the alternating sum of the identity map. Thus the cohomology groups vanish unless $k = 2$ ($k_0 = 0$) and the theorem follows easily.
	\end{proof}

  \subsection{The multiplicity formula}\label{sec::EP-symplectic}
  
  We  begin with some general notation. For a connected reductive group $\BG$ defined over $F$ and $G = \BG(F)$, we let $\CT_{ell}(G)$ be a set of representatives of maximal elliptic tori of $G$ and we let $\Gamma_{ell}(G)$ be the set of regular semisimple elliptic conjugacy classes of $G$. We define the Haar measure on $\Gamma_{ell}(G)$ to be
  $$\int_{\Gamma_{ell}(G)}f(X)dX=\sum_{T\in \CT_{ell}(G)}|W(G,T)|^{-1}\int_{T(F)}f(t)dt$$
  for every reasonable function $f$ on it. We use $D^G(\cdot)$ to denote the Weyl discriminant.

  In this case, $G = G_{2n}(F)$ and $H = H_{2n}(F)$. The geometric multiplicity is defined to be 
  $$m_{geom}(\theta)=\int_{\Gamma_{ell}(H)}D^H(t) \theta(t)dt$$
  where $\theta$ is a quasi-character on $ G_{2n}(F)$. The multiplicity formula
  $$m(\pi)=m_{geom}(\theta_\pi)$$
  has been proved for all supercuspidal representation of $G(F)$ in \cite{Beuzart-Plessis-Wan}. We can reduce the verification of \eqref{eq::Conjecture-EP} to the case where $\pi$ is a discrete series representation by the following two results.
  
  Let $P=MN$ be a maximal parabolic subgroup of $G$ with $M = G_{n_1}\times G_{n_2}$ ($2n=n_1+n_2$) and assume that $\theta$ is the parabolic induction of a quasi-chracter $\theta_M=\theta_1\otimes \theta_2$ of $M(F)$ (in the sense of Section 4.7 of \cite{Beuzart-Plessis-UnitaryGGP-Archimedean}). If $n_1$ is odd, we define
  $$m_{geom,M}(\theta_M)=0.$$
  If $n_1$ is even, then $M$ has a subgroup $H_M = H_{n_1} \times H_{n_2}$. We define
  $$m_{geom,M}(\theta_M)=\int_{\Gamma_{ell}(H_M)}D^{H_M}(t) \theta_M(t)dt.$$

  \begin{prop}
  	With the notation above, we have
  	$$m_{geom}(\theta)=m_{geom}(\theta_M).$$
  \end{prop}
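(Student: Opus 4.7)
The plan is to reduce both sides to integrals over elliptic tori via the Weyl integration formula, expand $\theta(t)$ for $t \in H(F)$ regular semisimple using Harish-Chandra's induced character formula, and match the resulting terms. Concretely, for such $t$ one has
\begin{align*}
D^G(t)^{1/2}\,\theta(t) = \sum_{x} D^M(x^{-1}tx)^{1/2}\,\theta_M(x^{-1}tx),
\end{align*}
where $x$ runs over representatives of $M(F)\backslash\{g\in G(F):g^{-1}tg\in M(F)\}/Z_G(t)(F)$. Substituting this into $m_{geom}(\theta)$ and opening the sum $\sum_{T\in \CT_{ell}(H)}|W(H,T)|^{-1}\int_{T(F)}$ converts the problem into counting, for each elliptic $t\in H(F)$, the $G$-conjugates that land in $M(F)$, weighted by the appropriate discriminant ratios.

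The orbit analysis is the heart of the argument. For a regular elliptic $t\in H(F)$, the centralizer $Z_G(t)$ is the unit group of the commutative étale $F$-algebra $E=F[t]$, which carries the involution $\sigma$ induced by $g\mapsto J\transinv{g}J^{-1}$, and $t\in H$ forces $(E,\sigma)$ to present $t$ as a symplectic-type element, i.e.\ $\sigma(t)=t^{-1}$. A conjugate $x^{-1}tx$ lying in $M(F)=G_{n_1}(F)\times G_{n_2}(F)$ corresponds to a decomposition $E=E_1\times E_2$ by orthogonal idempotents with $\dim_F E_i=n_i$. When $n_1$ is odd, no such idempotent can be compatible with the symplectic pairing restricted to the corresponding $F$-subspace of dimension $n_1$ (a symplectic form requires even rank on each $t$-stable summand stable under $\sigma$), so the set of cosets $x$ is empty for every elliptic $t$, and the induced character formula yields $\theta(t)=0$ on all of $\Gamma_{ell}(H)$. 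This gives $m_{geom}(\theta)=0=m_{geom,M}(\theta_M)$ in the odd case.

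When $n_1$ is even, the admissible splittings $E=E_1\times E_2$ are in bijection with pairs $(T_1,T_2)$ of elliptic tori $T_i\subset H_{n_i}$ whose product is $G$-conjugate to the original $T\subset H$. After substitution, the double sum over $(T,x)$ rearranges to a sum over $T'=T_1\times T_2\in \CT_{ell}(H_M)$. The factors $D^H(t)D^G(t)^{-1/2}D^M(x^{-1}tx)^{1/2}$ collapse to $D^{H_M}(x^{-1}tx)$ using the elementary identities $D^G=D^M\cdot|\det(1-\mathrm{Ad}(t))|_{\mathrm{Lie}(N)}|_F$ and the analogous identity for $H$ relative to $H_M$, together with the fact that for $t$ elliptic and $\sigma(t)=t^{-1}$ the unipotent contributions from $N$ and from the symmetric complement match up. Combined with the bookkeeping $|W(H,T)|^{-1}\cdot \#\{x\}=|W(H_M,T')|^{-1}$, one recovers exactly the Weyl-integration expansion of $m_{geom,M}(\theta_M)$.

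The main obstacle is the last step: verifying the discriminant cancellation and the matching of Weyl group orders. While morally forced by the pattern of the Weyl integration formula on symmetric spaces, it requires an explicit comparison of $|\det(1-\mathrm{Ad}(t))|$ on the three Lie algebras $\mathrm{Lie}(N)$, $\mathrm{Lie}(H)/\mathrm{Lie}(H\cap M)$, and $\mathrm{Lie}(G)/\mathrm{Lie}(M)$, together with a count of $G(F)$-cosets $x$ producing a fixed pair $(T_1,T_2)$, which is where the specific structure of the symplectic pair $(G_{2n},\Sp_{2n})$ enters in an essential way.
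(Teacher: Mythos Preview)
Your approach is essentially the same as the paper's: both arguments feed the induced-character formula (what the paper cites as Proposition~4.7.1 of \cite{Beuzart-Plessis-UnitaryGGP-Archimedean}) into the definition of $m_{geom}$ and then analyse which elliptic conjugacy classes of $H$ meet a $G$-conjugate of $M$. Your \'etale-algebra argument for the odd case --- that every field factor of $F[t]$ for a regular elliptic $t\in\Sp_{2n}$ has even $F$-dimension, so no $t$-stable subspace of odd dimension exists --- is exactly the content of the description of $\Gamma_{ell}(H)$ that the paper imports from \cite{Waldspurger-TransferFactor-ClassicalGroup-Manuscripta}; the paper simply quotes this rather than rederiving it.

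Where you hesitate, the paper is direct. The discriminant cancellation you flag as the ``main obstacle'' is the single identity
\[
\Big(\frac{D^H(t)}{D^{H_M}(t')}\Big)^{2}=\frac{D^G(t)}{D^M(t')},
\]
which the paper states parenthetically. This is an elementary root-space count: every root of $\Sp_{2n}$ lying outside $\Sp_{n_1}\times\Sp_{n_2}$ contributes the same factor $|1-\alpha(t)|_F$ as two roots of $\GL_{2n}$ lying outside $\GL_{n_1}\times\GL_{n_2}$. Once you write this down, the factor $D^H(t)\,D^G(t)^{-1/2}D^M(t')^{1/2}$ in your substitution collapses immediately to $D^{H_M}(t')$, with no further unipotent bookkeeping needed. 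Likewise, the Weyl-group count you worry about is absorbed in the paper's formulation by phrasing everything via a measure-preserving map $\iota\colon\Gamma_{ell}(H_M)\to\Gamma_{ell}(H)$ and summing over the (finite) fibres $\iota^{-1}(t)$; this repackaging makes the matching of $|W(H,T)|$ with $|W(H_M,T')|$ automatic, since the measures on $\Gamma_{ell}$ already incorporate those weights. So your outline is correct, and the pieces you call obstacles are in fact one-line observations once isolated.
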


  \begin{proof}
  	If $n_1$ is odd, by Proposition 4.7.1 of \cite{Beuzart-Plessis-UnitaryGGP-Archimedean} and the description of $\Gamma_{ell}(H)$ in \cite{Waldspurger-TransferFactor-ClassicalGroup-Manuscripta}, we know that $\theta(t)=0$ for all $t\in \Gamma_{ell}(H)$. This proves the proposition.
  	
  	If $n_1$ is even, by the description of $\Gamma_{ell}(H)$ in \cite{Waldspurger-TransferFactor-ClassicalGroup-Manuscripta}, we have a natural measure-preserving map $\iota:\Gamma_{ell}(H_M)\rightarrow \Gamma_{ell}(H)$. Using Proposition 4.7.1 of \cite{Beuzart-Plessis-UnitaryGGP-Archimedean} again, we know that (note that $(\frac{D^H(t)}{D^{H_M}(t')})^2=\frac{D^G(t)}{D^M(t')}$)
  	$$D^H(t)\theta(t)=\sum_{t'}D^{H_M}(t')\theta_M(t')$$
  	where $t'$ runs over elements in $\Gamma_{ell}(H_M)$ such that $\iota(t')=t$ (this is always a finite set, if it is empty then the right hand side is just 0). This proves the proposition.
  \end{proof}
   
  \begin{prop}
  	 Let $\pi_1$ and $\pi_2$ be smooth representations of finite length of $G_{n_1}$ and $G_{n_2}$ respectively ($2n = n_1 + n_2$). If $n_1$ (hence $n_2$) is odd, then
  	 \begin{align*}
  	 	\EP_H (\pi_1 \times \pi_2, \BC) = 0.
  	 \end{align*}
  	 If $n_1$ (hence $n_2$) is even, then
  	 \begin{align*}
  	 	\EP_H(\pi_1 \times \pi_2, \BC) = \EP_{H_{n_1}} (\pi_1, \BC) \cdot \EP_{H_{n_2}} (\pi_2, \BC).
  	 \end{align*}
  \end{prop}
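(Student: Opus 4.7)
The plan is to apply the Blanc--Delorme filtration of $(\pi_1\times\pi_2)|_H$ indexed by $P$-orbits on the symmetric space $X$, where $P = MU$ is the standard parabolic with Levi $M = G_{n_1}\times G_{n_2}$, and then use additivity of the Euler--Poincar\'e pairing along short exact sequences to write
\begin{align*}
\EP_H(\pi_1\times\pi_2,\BC) \;=\; \sum_{\CO\in P\backslash X} \EP_H((\pi_1\times\pi_2)_\CO,\BC).
\end{align*}
The task then reduces to determining which orbits $\CO$ contribute, and for this part (2) of Lemma \ref{lem::cor-geo-lemm-Sp} is tailor-made: an orbit $\CO$ is relevant to $\EP_H$ only when the associated standard Levi $L$ of $M$ has all even block sizes, satisfies $L = M$, and obeys $\iota_M(\CO) = w_M$.

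If $n_1$ (hence $n_2$) is odd, no such orbit exists, so $\EP_H(\pi_1\times\pi_2,\BC) = 0$, which proves the first assertion. If $n_1$ is even, there is exactly one relevant orbit $\CO$, and by Lemma \ref{lem::admissible orbit-Sp} it corresponds to the trivial involution $\tau = e$ in $S_2[(n_1,n_2)]$. For this $\tau$ one reads off $L_x = H_{n_1}\times H_{n_2}$, and $\delta_x = \mathbf{1}$ because the product in Lemma \ref{lem::admissible orbit-Sp}(2) is empty (no index $i$ satisfies $i < \tau(i)$). Proposition \ref{prop::Geometric-lemma}, together with the fact that $L = M$ makes $r_{L,M}$ the identity functor, then gives
\begin{align*}
\Ext{q}_H((\pi_1\times\pi_2)_\CO,\BC) \;\cong\; \Ext{q}_{H_{n_1}\times H_{n_2}}(\pi_1\boxtimes\pi_2,\BC),
\end{align*}
and the K\"unneth formula (Theorem \ref{thm::Kunneth}) upon passing to alternating sums of dimensions yields
\begin{align*}
\EP_H(\pi_1\times\pi_2,\BC) \;=\; \EP_{H_{n_1}}(\pi_1,\BC)\cdot \EP_{H_{n_2}}(\pi_2,\BC).
\end{align*}

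The main technical point to verify, rather than a deep obstacle, is that $\EP_H$ is individually well-defined on each subquotient of the filtration so that the additivity invoked above is rigorous. For the distinguished orbit appearing in the even case this is immediate from Aizenbud--Sayag finiteness applied to each $H_{n_i}$ together with K\"unneth; for the remaining (irrelevant) orbits one reduces via Proposition \ref{prop::Geometric-lemma} to Ext groups of finite-length representations of the smaller reductive factors occurring in $L_x$, where the same finiteness is available. Once this routine check is in place, the computation above goes through without further difficulty.
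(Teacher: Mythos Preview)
Your proof is correct and follows essentially the same approach as the paper: apply part (2) of Lemma \ref{lem::cor-geo-lemm-Sp} to see that no $P$-orbit is relevant when $n_1$ is odd, and that only the single $M$-admissible orbit with $\iota_M(\CO)=w_M$ survives when $n_1$ is even, then conclude via Proposition \ref{prop::Geometric-lemma} and the K\"unneth theorem. You spell out more explicitly than the paper does the identification $L_x = H_{n_1}\times H_{n_2}$, $\delta_x=\mathbf{1}$ (via $\tau=e$ in $S_2[(n_1,n_2)]$), and you add a welcome paragraph on why $\EP$ is well-defined on each filtration piece, which the paper leaves implicit.
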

 \begin{proof}
 	Let $P = MU$ for the parabolic subgroup of type $(n_1,n_2)$. If $n_1$ is odd, by part (2) of Lemma \ref{lem::cor-geo-lemm-Sp}, no $P$-orbit is relevant to $\EP_{H} (\pi_1 \times \pi_2, \BC)$. If $n_1$ is even, by the same lemma, $\EP_{H} (\pi_1 \times \pi_2, \BC)$ is supported only on one $M$-admissible orbit. The proposition then follows from the multiplicity of $\EP$ which is a direct consequence of the Kunneth theorem \ref{thm::Kunneth}. 
 \end{proof}

  \subsection{Relatively supercuspidal representations}
 
 We begin with some general notation. For a connected reductive group $\BG$ defined over $F$ and $G = \BG(F)$. Let $\BH \subset \BG$ be a closed subgroup and $H = \BH (F)$. Let $A_{G,H}$ be the maximal split torus of $Z(G) \cap H$ with $Z(G)$ the center of $G$. Let $\pi$ be an irreducible representation of $G$ with central character $\omega_{\pi}$. We say that $\pi$ is $H$-distinguished if $\Hom_H (\pi, \BC) \neq 0$. In particular, the central character $\omega_{\pi}$ of $\pi$ is trivial on $A_{G,H}$. We say that $\pi$ is $H$-relatively supercuspidal if $\pi$ is $H$-distinguished and for every nonzero $\ell \in \Hom_H (\pi, \BC) $, the relative matrix coefficient $\ell (\pi (\cdot )v)$ for every $v$ in the space of $\pi$ is compactly supported modulo $Z_G H$. Assume that the central character $\omega_{\pi}$ is unitary. We say that $\pi$ is a $H$-relatively discreter series representation if $\pi$ is $H$-distinguished and for all nonzero $\ell \in \Hom_H (\pi, \BC)$ and $v$ in the space of $\pi$, the relative matrix coefficient $\ell (\pi (\cdot) v)$ lies in $L^2 (Z_G H \backslash G)$. It is clear that an $H$-relatively supercuspidal representation is an $H$-relatively discrete series representation.
 
 Now we focus on our symplectic case. It is shown in \cite[Proposition 8.3.4]{Kato-Takano-Subrepresentation-symmetric} by the Jacquet module method that $Z([\nu^{-1/2}\rho, \nu^{1/2}\rho])$ is $H$-relatively supercuspidal for any irreducible cuspidal representation $\rho$ of $G_d$. Following the idea in \cite{Cai-Fan-RelativeCuspidal-Ext} we reprove this result and prove also that they actually exhaust all $H$-relative supercuspidal representations.
 
 \begin{prop}\label{prop::RSC-symplectic}
 	An irreducible representation $\pi$ of $G$ is $H$-relatively supercuspidal if and only if $\pi = Z([\nu^{-1/2}\rho, \nu^{1/2}\rho])$ for some irreducible cuspidal $\rho$. 
 \end{prop}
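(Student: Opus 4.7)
The "if" direction, namely that each $\textup{Z}([\nu^{-1/2}\rho,\nu^{1/2}\rho])$ is $H$-relatively supercuspidal, was established by Kato-Takano in \cite[Section 8.3]{Kato-Takano-Subrepresentation-symmetric} via the Jacquet module method, so we focus on the converse. The strategy, following Cai-Fan \cite{Cai-Fan-RelativeCuspidal-Ext}, is to detect relative supercuspidality through the top extension group via Schneider-Stuhler duality.

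By the classification of $\Sp$-distinguished irreducible representations in \cite{Mitra-Offen-Sayag-Klyachko}, every $H$-distinguished irreducible representation $\pi$ of $G$ takes the form $\pi = \textup{Z}(\mathfrak{m})$ for some multisegment $\mathfrak{m}$ all of whose segments have even length. Since $\textup{Z}([\nu^{a}\rho,\nu^{a+1}\rho]) \cong \textup{Z}([\nu^{-1/2}\rho',\nu^{1/2}\rho'])$ for $\rho' = \nu^{a+1/2}\rho$, the claim reduces to showing that the only $H$-relatively supercuspidal ones among these $\pi$'s arise from $\mathfrak{m}$'s consisting of a single segment of length $2$.

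To this end, we write $\pi = \textup{Z}(\mathfrak{m})$ as a subrepresentation of $\textup{Z}(\Delta_1) \times \cdots \times \textup{Z}(\Delta_k)$ and apply the geometric lemma (Proposition \ref{prop::Geometric-lemma}) to the standard parabolic of the matching type. Using the orbit analysis of Lemma \ref{lem::admissible orbit-Sp} together with Lemma \ref{lem::cor-geo-lemm-Sp}, we compute the relevant extension groups inductively, in the spirit of Proposition \ref{prop::speh-ext} and Proposition \ref{prop::product of speh}. For $k \ge 2$, or for $k = 1$ with $l(\Delta_1) \ge 4$, the distinguishing $H$-form factors through a proper $\theta$-split Jacquet module, and the Cai-Fan criterion forces the top Ext to vanish, thereby ruling out relative supercuspidality.

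The main obstacle will be carrying out the orbit-by-orbit analysis cleanly, tracking the modular characters $\delta_x$ from Lemma \ref{lem::admissible orbit-Sp} across $P$-orbits, and verifying that the Cai-Fan criterion genuinely applies in the symplectic pair setting so as to conclude the desired top-Ext vanishing for the excluded multisegments.
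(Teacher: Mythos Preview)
Your proposal has a genuine gap at the very first reduction step. The claimed classification ``every $H$-distinguished irreducible $\pi$ is $\textup{Z}(\mathfrak{m})$ with all segments of even length'' is not what \cite{Mitra-Offen-Sayag-Klyachko} proves: that paper treats \emph{ladder} representations, not arbitrary irreducibles, and even there the distinction criterion is more delicate than segment-parity. So the set of representations you would have to rule out is not the one you describe, and the inductive scheme built on that description does not get off the ground.

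The paper avoids this entirely by first invoking the Lapid--Offen classification \cite{Lapid-Offen-Explicit-Plancherel-GL-Sp}: the $H$-relatively discrete series are exactly the representations $\textup{L}(\nu^{-1/2}\Delta,\nu^{1/2}\Delta)$. Since relatively supercuspidal implies relatively discrete series, this reduces the ``only if'' direction to a one-parameter family indexed by $\Delta$, and one then applies the Cai--Fan criterion in the form: $\pi$ is $H$-relatively supercuspidal iff $\Ext^{2l-1}_H(D(\pi)^{\vee},\BC)\neq 0$, where $l=l(\Delta)$ and $D(\pi)^{\vee}=\textup{Z}(\nu^{-1/2}\Delta^{\vee},\nu^{1/2}\Delta^{\vee})$. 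For $l=1$ this is exactly $\Ext^1_H(\St_2(\rho^{\vee}),\BC)=\BC$ from Theorem \ref{thm::Ext-DS-GL-Sp}. For $l>1$ the paper bounds the top nonvanishing degree of $\Ext^{\sharp}_H$ on the two middle terms of the short exact sequence from Proposition \ref{prop::Zel-ShortExactSeq} using Proposition \ref{prop::speh-ext} and \cite[Proposition 2.9]{Prasad-Ext-BranchingLaw-ICM2018}, showing it is at most $l<2l-1$.

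A second, related issue: your use of Cai--Fan is imprecise. The criterion concerns the top-degree Ext of the \emph{Aubert--Zelevinsky dual} $D(\pi)^{\vee}$, not of $\pi$ itself; embedding $\pi=\textup{Z}(\mathfrak{m})$ into $\textup{Z}(\Delta_1)\times\cdots\times\textup{Z}(\Delta_k)$ and analyzing orbits there does not directly compute the relevant Ext group. You would need either to pass to $D(\pi)$ first or to use the Kato--Takano Jacquet-module criterion directly (which does not involve Ext at all), but as written the two are conflated.
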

 
 \begin{proof}
 	By \cite[Corollary, Section 1.2]{Lapid-Offen-Explicit-Plancherel-GL-Sp}, an irreducible representation $\pi$ of $G$ is an $H$-relatively discrete series if and only if $\pi = \textup{L}(\nu^{-1/2}\Delta, \nu^{1/2}\Delta)$ for some segment $\Delta$. So we need to show that such a representation $\pi$ is $H$-relatively supercuspidal if and only if $l(\Delta) = 1$. Now let $\pi = \textup{L}(\nu^{-1/2}\Delta, \nu^{1/2}\Delta)$. It is well known that the Aubert-Zelevinsky involution $D(\pi)$ of $\pi$  is $Z(\nu^{-1/2}\Delta, \nu^{1/2}\Delta)$ (see \cite[Appendix A.5]{Lapid-Minguez-ParabolicInduction}). Since we have multiplicity one in this case, by \cite[Proposition 4.2]{Cai-Fan-RelativeCuspidal-Ext}, $\pi$ is $H$-relatively supercuspidal if and only if 
 	\begin{align*}
 		\Ext{2l-1}_H (  D(\pi)^{\vee}, \BC) = \Ext{2l-1}_H (Z (\nu^{-1/2}\Delta^{\vee}, \nu^{1/2}\Delta^{\vee}), \BC) \neq 0,
 	\end{align*}
 	where $l = l (\Delta)$ and $\Delta^{\vee}$ is the dual segment of $\Delta$. When $l = 1$ so that $\Delta = \{ \rho\}$, $D(\pi)$ is nothing but the generalized Steinberg representation $\St_2(\rho)$. The if part of the proposition then follows from Theorem \ref{thm::Ext-DS-GL-Sp}. For the only if part, by Proposition \ref{prop::Zel-ShortExactSeq}, we first have an exact sequence
 	\begin{align}\label{eq::RSC-short}
 		0 \ra Z(\nu^{-1/2}\Delta^{\vee}, \nu^{1/2}\Delta^{\vee}) \ra Z(\nu^{1/2}\Delta^{\vee}) \times Z(\nu^{-1/2} \Delta^{\vee}) \ra   Z(\Delta_1) \times Z(\Delta_2) \ra 0,
 	\end{align}
 	where $\Delta_1$ resp. $\Delta_2$ is the union resp. intersection of the segments $\nu^{-1/2}\Delta^{\vee}$ and $\nu^{1/2}\Delta^{\vee}$. By the long exact sequence associated to \eqref{eq::RSC-short}, it suffices for us to show that, for $l > 1$, 
 	\begin{align}\label{eq::RSC-symplectic-I}
 		\Ext{i}_H ( Z(\nu^{1/2}\Delta^{\vee}) \times Z(\nu^{-1/2} \Delta^{\vee}),\BC) = 0 \text{ for all } i \geqslant 2l-1
 	\end{align}
 	and
 	\begin{align}\label{eq::RSC-symplectic-II}
 		\Ext{i}_H ( Z(\Delta_1)  \times Z(\Delta_2) ,\BC ) = 0 \text{ for all } i \geqslant 2l.
 	\end{align}
 	The proofs of \eqref{eq::RSC-symplectic-I} and \eqref{eq::RSC-symplectic-II} are similar, so we prove only the former one. Let $P = MU$ be the standard parabolic subgroup of type $(ld,ld)$. For any $P$-orbit $\CO$, let $L,x$ and $\delta_x$ be associated to $\CO$ as in Section \ref{sec::gemeotric lemma}. Note that $L_x$ is a direct product of symplectic groups and general linear groups. By Proposition \ref{prop::Geometric-lemma}, the Kunneth theorem \ref{thm::Kunneth} and the description of $L_x,\delta_x$ in Lemma \ref{lem::admissible orbit-Sp}, we see that 
 	\begin{align*}
 		\Ext{2l-1}_H ( (Z(\nu^{1/2}\Delta^{\vee}) \times Z(\nu^{-1/2} \Delta^{\vee}))_{\CO},\BC)
 	\end{align*}
 	is a direct sum of tensor products of extension groups of the form
 	\begin{align*}
 		   \Ext{i}_{H_{2n}} (Z(\Delta_2),\BC)  \text{ and }\Ext{j}_{G_{m}} (Z(\Delta_3) ,Z(\Delta_3)).
 	\end{align*}
 	By \cite[Proposition 2.9]{Prasad-Ext-BranchingLaw-ICM2018}, we have
 	\begin{align*}
 		\Ext{j}_{G_m} (Z(\Delta_3),Z(\Delta_3)) = 0 \text{ for all } j > l(\Delta_3).
 	\end{align*}
 	Thus, combined with Proposition \ref{prop::speh-ext}, the largest number $d$ such that 
 	\begin{align*}
 		\Ext{d}_H ( (Z(\nu^{1/2}\Delta^{\vee}) \times Z(\nu^{-1/2} \Delta^{\vee}))_{\CO},\BC)
 	\end{align*}
 	is nonzero is not greater than $l$, which is strictly less than $2l-1$ as $l >1$. This finishes the proof of the proposition.
 \end{proof}

 At the end of this section we raise a question on the branching laws from $G$ to $H$. It is known that the restriction to $H$ of any supercuspidal representation of $G$ is projective. Note that this is not true for $H$-relatively supercuspidals. For example, when $n = 1$, the trivial representation is $H$-relatively supercuspidal, but by \cite[Theorem 1]{Orlik-Extensions-Steinberg-JoA},
 \begin{align*}
 	\Ext{1}_{\SL_2(F)} (\BC,\St) = \BC.
 \end{align*}
 From this perspective, one might pose the following conjecture.

 \begin{conj}\label{conj::branching}
 		Let $\pi$ be an irreducible representation of $G$. Then the restriction $\pi|_H$ to $H$ of $\pi$ is projective if and only if $\pi$ is supercuspidal.
 	
 \end{conj}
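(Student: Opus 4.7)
The forward direction is essentially noted in the paragraph preceding the conjecture: for a supercuspidal $\rho$ of $G$, matrix coefficients are compactly supported modulo $Z_G$, and since $A_{G,H} = \{e\}$ in the symplectic case, a standard argument shows $\rho|_H$ is projective in $\CM(H)$. The substantive direction is the converse, namely that every irreducible non-supercuspidal $\pi$ of $G$ should admit some smooth $H$-module $W$ with $\Ext{i}_H(\pi, W) \neq 0$ for some $i > 0$. The example $\Ext{1}_{\SL_2(F)}(\BC, \St) = \BC$ cited in the paper already exhibits the expected pattern: the witness $W$ need not be a character, but can be the restriction of another smooth $G$-module.

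My plan is to first reduce to the case where $\pi$ is essentially square-integrable. By the Zelevinsky classification, any irreducible non-supercuspidal $\pi$ is a subquotient of some $I_P^G(\tau)$ with $P = MU$ a proper standard parabolic and $\tau = \textup{L}(\Delta_1) \otimes \cdots \otimes \textup{L}(\Delta_r)$ a tensor product of essentially square-integrable representations. Combining the geometric lemma (Proposition \ref{prop::Geometric-lemma}) with the K\"unneth theorem (Theorem \ref{thm::Kunneth}) and the Hochschild--Serre spectral sequence (Lemma \ref{lem::HS-spectral}), one should relate nonvanishing of some $\Ext{i}_H(\pi, W)$ to the corresponding nonvanishing problem for each $\textup{L}(\Delta_j)$ relative to a smaller symplectic subgroup $H_{M_j} \subset M_j$. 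After this reduction, the subcase $l(\Delta_j) = 1$ returns to the supercuspidal side of the statement, and $l(\Delta_j) = 2$ is settled immediately by Theorem \ref{thm::Ext-DS-GL-Sp}, which gives $\Ext{1}_H(\St_2(\rho), \BC) = \BC$.

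The main obstacle is the remaining case $\pi = \textup{L}(\Delta)$ with $l(\Delta) = k \geq 3$. Here Theorem \ref{thm::Ext-DS-GL-Sp} shows that $W = \BC$ is useless, and since $\Sp_{2n}(F)$ is perfect there are no nontrivial characters available either, so genuinely nonabelian test modules are required. My candidate is $W = \St_2(\rho')|_H$ for a cuspidal $\rho'$ whose parameters are coupled to those of $\rho$ (for instance $\rho' = \nu^a \rho$ with $a$ tuned so that the vanishing obstruction of Lemma \ref{lem::vanishing} is avoided). By Lemma \ref{lem::tensor-hom-ext} one has $\Ext{i}_H(\pi, W) \cong \Ext{i}_H(\pi \otimes W^{\vee}, \BC)$, and $\pi \otimes (\pi')^{\vee}$ decomposes as a subquotient of a parabolically induced representation via Bernstein--Zelevinsky products, which feeds into the geometric lemma. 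An alternative and likely cleaner route is the Schneider--Stuhler duality approach of Cai and Fan in \cite{Cai-Fan-RelativeCuspidal-Ext}, which detects the top nonvanishing Ext from the Aubert--Zelevinsky dual $D(\pi) = \textup{Z}([\nu^{-(k-1)/2}\rho, \nu^{(k-1)/2}\rho])$; distinction of such ladder representations is well studied in \cite{Mitra-Offen-Sayag-Klyachko} and should furnish a nonzero Ext in some positive degree. The hard part will be matching parameters so that the alternating cancellation that forced the $\BC$-coefficient vanishing in Theorem \ref{thm::Ext-DS-GL-Sp} does not recur for the new test module, and keeping enough control over the geometric lemma's contributions from non-admissible orbits to isolate a single surviving term.
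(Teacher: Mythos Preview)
The statement you are trying to prove is stated in the paper as a \emph{conjecture}, not a theorem; the paper gives no proof and does not claim one. What the paper does around Conjecture~\ref{conj::branching} is: (i) note the forward direction (supercuspidal $\Rightarrow$ projective) as known; (ii) give the $\SL_2$ example $\Ext{1}_{\SL_2(F)}(\BC,\St)=\BC$ as motivation; and (iii) record, as a \emph{necessary} consistency check, that $\pi|_{\SL_n(F)}$ is projective iff $\pi$ is supercuspidal, citing Adler--Roche. There is therefore nothing to compare your proposal against: you are attempting to settle something the paper explicitly leaves open.

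As for the proposal itself, there are genuine gaps beyond the admitted incompleteness in the $k\geqslant 3$ case. Your reduction step does not work as stated. Writing a non-supercuspidal $\pi$ as a subquotient of $I_P^G(\tau)$ and then studying $\textup{L}(\Delta_j)|_{H_{M_j}}$ via the geometric lemma controls the restriction of the \emph{full induced representation}, not of the subquotient $\pi$; non-projectivity of $I_P^G(\tau)|_H$ does not pass to $\pi|_H$ in any formal way. Moreover, the reduction collapses in the most basic case: if $\pi$ is an irreducible full parabolic induction of supercuspidals (each $\Delta_j$ of length $1$), every factor $\textup{L}(\Delta_j)|_{H_{M_j}}$ \emph{is} projective, yet $\pi$ is not supercuspidal, so your scheme produces no witness $W$. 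One would need a direct argument that such tempered inductions are already non-projective on $H$, and this is not supplied. In short, the proposal is an outline of hopes rather than a strategy, and the paper itself treats the statement as open.
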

 
 Note that a necessity for Conjecture \ref{conj::branching} to be true is the fact that, for any smooth representation $\pi$ of $\GL_n(F)$, its restriction to $\SL_n(F)$ is projective if and only if $\pi$ is supercuspidal. This can be easily shown by the result of Adler and Roche in \cite{Adler-Roche-Proj-Cuspidal}.

 \section{The linear pair}\label{sec::linear}

 Let $G = G_{2n}$ and $H \cong G_n \times G_n $ be the subgroup of $G_{2n}$ of matrices of the form
 \begin{align*}
 	H=  \begin{pmatrix}
 		g_1 & \\
 		  & g_2
 	\end{pmatrix} \text{ with } g_1,g_2 \in G_n.
 \end{align*}
 Let $Z = Z_{2n}$ be the center of $G_{2n}$.
 
 \subsection{The orbit analysis}
 
 Let 
 \begin{align*}
 	\varepsilon = \begin{pmatrix}
 		I_n & \\
 		  & - I_n
 	\end{pmatrix}
 \end{align*}
 and $\theta$ be the involution on $G$ defined by $\theta(g)  = \varepsilon g \varepsilon^{-1}$. So $H = G^{\theta}$. Let
 \begin{align*}
 	X = \{x \in G \colon \theta(x) = x^{-1}\}
 \end{align*}
 equipped with the $G$-action given by $g \cdot x = g x \theta(g)^{-1}$. Note that the map $x \mapsto x \varepsilon$ givens a bijection of $X$ onto the subset of elements of order $2$ in $G$ and that the action of $G$ on $X$ is transformed to the conjugation action of $G$. 
 
 For our purposes, it suffices to consider parabolic orbits in $G \cdot e$ when the parabolic subgroup is maximal. Let $P = P_{(m,2n-m)} = MU$ be the standard parabolic subgroup of $G$ of type $(m,2n-m)$. The explicit description of $P$-orbits in $G \cdot e$ has been given in \cite[Section 4]{YangChang-LinearPeriods-MathZ}. We recall the results there and refer the reader to \cite{YangChang-LinearPeriods-MathZ} for the proofs of the following two lemmas.
 
 Let 
 \begin{align*}
 	\mathcal{I}_m  =\{ (r,s) \in \BN \times \BN  \colon r +s \leqslant m, \ m- r\leqslant n \text{ and } m -s \leqslant n \}.
 \end{align*}
	For $(r,s) \in \CI_m$, let
	\begin{align*}
		w_{(r,s)} = \begin{pmatrix}
			I_{ r + s}  & & &  \\
			&  &  I_{m - r -s}  &  \\
			& I_{m -r -s} & &  \\
			&  &  &  I_{2n-2m + r + s }
		\end{pmatrix}.
	\end{align*}
 We have that $M \cap w_{(r,s)} M w_{(r,s)}^{-1}$ is the standard Levi subgroup of type $(r+s, m-r-s,m-r-s,2n-2m+r+s)$. 
\begin{lem}\label{lem::linear-fibration}
	There is a bijection between $\CI_m$ and the set of $P$-orbits in $G \cdot e$. For $(r,s) \in \CI_m$, let $\CO_{(r,s)}$ be the orbit corresponding to $(r,s)$. Then we have $\iota_M (\CO_{(r,s)})   =  w_{(r,s)}$, where
	\begin{align*}
		\iota_M \colon P \backslash \,G \cdot e  &\ra  {}_MW_M \cap \mathfrak{I}_0(\theta)    
	\end{align*}
	is the fibration map in Section \ref{sec::gemeotric lemma}.
\end{lem}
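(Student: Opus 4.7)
My plan is to convert the orbit problem into a question about $P$-conjugacy classes of involutions, attach simple numerical invariants to these, and then match the Bruhat data of explicit representatives with $w_{(r,s)}$. The $G$-equivariant map $X \to \{\sigma \in G : \sigma^2 = e\}$ sending $x \mapsto x\varepsilon$ intertwines the twisted $G$-action on $X$ with conjugation on involutions. Under this map, the orbit $G \cdot e$ corresponds to the $G$-conjugacy class of $\varepsilon$, namely the set $\Theta$ of trace-zero involutions in $G$, which is in bijection with the set of direct-sum decompositions $V = W_+(\sigma) \oplus W_-(\sigma)$ with $\dim W_\pm(\sigma) = n$. For $\sigma \in \Theta$ I would introduce the $P$-invariants
\[
r(\sigma) := \dim(V_1 \cap W_+(\sigma)), \qquad s(\sigma) := \dim(V_1 \cap W_-(\sigma)),
\]
where $V_1 = \operatorname{span}(e_1, \ldots, e_m)$ is the subspace stabilized by $P$. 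Since $(V_1 \cap W_+) \cap (V_1 \cap W_-) = 0$ we get $r + s \leq m$, and since $W_\pm/(V_1 \cap W_\pm)$ embeds into $V/V_1$ we get $n - r, n - s \leq 2n - m$; hence $(r(\sigma), s(\sigma)) \in \CI_m$.

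To show the resulting map $P \backslash \Theta \to \CI_m$ is a bijection I would exhibit, for each $(r,s) \in \CI_m$, the explicit representative
\[
\sigma_{(r,s)} = \operatorname{diag}(I_r, -I_s) \oplus \begin{pmatrix} 0 & I_{m-r-s} \\ I_{m-r-s} & 0 \end{pmatrix} \oplus \operatorname{diag}(I_{n-m+s}, -I_{n-m+r}),
\]
whose $\pm 1$-eigenspaces manifestly satisfy $r(\sigma_{(r,s)}) = r$ and $s(\sigma_{(r,s)}) = s$, and then run a Witt-type normalization for injectivity. Writing $\pi : V \to V/V_1$ for the projection, the Levi $M = G_m \times G_{2n-m}$ acts transitively on adapted frames of the data $(V_1 \cap W_+, V_1 \cap W_-, \pi(W_+), \pi(W_-))$, since all the dimensions involved (including $\dim(\pi(W_+) \cap \pi(W_-)) = m - r - s$) depend only on $(r,s)$. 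The remaining ``cross'' datum is the canonical linear map
\[
\psi_\sigma : \pi(W_+) \cap \pi(W_-) \longrightarrow V_1/\bigl((V_1 \cap W_+) + (V_1 \cap W_-)\bigr)
\]
between spaces of dimension $m - r - s$, which a short computation shows is invariant under the unipotent radical $U$ of $P$, and which the condition $V = W_+ \oplus W_-$ forces to be an isomorphism, so it is normalized to the identity by the residual $M$-action.

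Finally, to verify $\iota_M(\CO_{(r,s)}) = w_{(r,s)}$ I would take $x_{(r,s)} := \sigma_{(r,s)} \varepsilon \in \CO_{(r,s)}$. Since $\varepsilon$ is diagonal it lies in $M$, so $\theta(P) = \varepsilon P \varepsilon^{-1} = P$ and the defining identity $P x_{(r,s)} \theta(P) = P \iota_M(\CO_{(r,s)}) \theta(P)$ becomes $P x_{(r,s)} P = P w_{(r,s)} P$. Bruhat double cosets in $\GL_{2n}$ for the maximal parabolic $P = P_{(m, 2n-m)}$ are classified by the rank of the lower-left block, and a direct block computation confirms that this rank is $m - r - s$ for both $x_{(r,s)}$ and $w_{(r,s)}$. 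I expect the main obstacle to be the middle paragraph: carefully isolating $\psi_\sigma$, checking its $U$-invariance, and verifying that the direct-sum hypothesis forces bijectivity is where most of the work sits; once that is done, the Bruhat calculation reduces to a routine check of block ranks.
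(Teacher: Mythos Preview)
The paper does not give its own proof of this lemma; it states that the explicit description of $P$-orbits in $G\cdot e$ is carried out in \cite[Section 4]{YangChang-LinearPeriods-MathZ} and refers the reader there. So there is no in-text argument to compare against, and your proposal stands on its own.

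Your outline is sound. The translation $x\mapsto x\varepsilon$ to trace-zero involutions is correct and intertwines the twisted action with conjugation; the invariants $(r,s)$ are well defined and land in $\CI_m$ for exactly the reasons you give; the explicit $\sigma_{(r,s)}$ realizes each pair; and the Bruhat check is right, since $\theta(P)=P$ (as $\varepsilon\in T\subset M$), $\theta$ acts trivially on $W$, and both $x_{(r,s)}$ and $w_{(r,s)}$ have lower-left $(2n-m)\times m$ block of rank $m-r-s$, with $w_{(r,s)}$ visibly the minimal-length representative (its one-line notation is increasing on $\{1,\dots,m\}$ and on $\{m+1,\dots,2n\}$, and it is its own inverse).

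The one place to tighten is the injectivity step. After normalizing the flag data $(V_1\cap W_\pm,\ \pi(W_\pm))$ by $M$, the ``remaining data'' is not literally just $\psi_\sigma$: you must also account for the ambiguity in lifting $\pi(W_\pm)$ back to $W_\pm$, which is an affine space modelled on $\Hom(\pi(W_\pm),\,V_1/(V_1\cap W_\pm))$. Your intended mechanism---use $U$ to absorb these lifts, then use the residual $M$-stabilizer to normalize $\psi_\sigma$---does work, but you should make the bookkeeping explicit (in particular, check that after normalizing $W_+$ by $U$, enough of the $U$-stabilizer remains to normalize $W_-$ modulo the $\psi_\sigma$-constraint). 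A quicker route, if you want to avoid this: the bijection $P\backslash(G\cdot e)\cong H\backslash(G/P)$ identifies the problem with $(\GL_n\times\GL_n)$-orbits on the Grassmannian $\mathrm{Gr}(m,F^n\oplus F^n)$, where it is classical that the orbit of a subspace $W$ is determined by $\bigl(\dim(W\cap F^n_+),\,\dim(W\cap F^n_-)\bigr)$. This gives injectivity in one line and matches your $(r,s)$.
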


\begin{lem}\label{lem::linear-general-orbit}
	For $(r,s) \in \CI_m$, let $\CO$ be the $P$-orbit corresponding to $(r,s)$. Let $w = w_{(r,s)}$ and $Q $ be the standard parabolic subgroup with $L = M \cap wMw^{-1}$. Then there is a good representative $x \in \CO \cap L w$ such that
	\begin{itemize}
		\item[(1)] $L_x = \{ \textup{diag}(g_1, g_2,g,g,g_3,g_4) \colon g_1 \in G_r,g_2 \in G_s, g\in G_{m -r -s},g_3 \in G_{n+s -m}, g_4 \in G_{n+r -m}\},$ 
		\item [(2)] $(\delta_{Q_x} \delta_Q^{-1/2}) (\textup{diag} (g_1,g_2,g,g,g_3,g_4))  = \nu (g_1)^{(s-r)/2}  \nu (g_2)^{(r-s)/2}  \nu (g_3)^{(s-r)/2}  \nu (g_4)^{(r-s)/2}.$
	\end{itemize} 
\end{lem}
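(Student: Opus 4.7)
The approach is to produce an explicit good representative of each orbit $\mathcal{O}_{(r,s)}$ lying in $Lw_{(r,s)}$, and then to read off both the stabilizer $L_x$ and the modular character directly from this representative. Since the fibration map and the Weyl elements $w_{(r,s)}$ have already been identified in Lemma \ref{lem::linear-fibration}, the analysis reduces to an explicit matrix computation in the style of \cite{YangChang-LinearPeriods-MathZ}.

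First, I would use the fact that via the bijection $X \to \{\text{involutions in }G\}$, $y \mapsto y\varepsilon$, the twisted action of $G$ on $X$ becomes ordinary conjugation of involutions in $G$. I would refine the $4$-block decomposition of type $(r+s,\,m-r-s,\,m-r-s,\,2n-2m+r+s)$ underlying $L$ into the $6$-block decomposition of type $(r,\,s,\,m-r-s,\,m-r-s,\,n+s-m,\,n+r-m)$, and define
\[
x_{(r,s)} = \eta_{(r,s)} \cdot w_{(r,s)},
\]
where $\eta_{(r,s)} \in L$ is a diagonal $\pm 1$ matrix whose signs on the six blocks are chosen so that the $\pm 1$-eigenspaces of $x_{(r,s)}\varepsilon$ have the correct dimensions $n+r-s$ and $n+s-r$, respectively. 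A direct verification then gives $(x_{(r,s)}\varepsilon)^2 = I_{2n}$, so that $x_{(r,s)} \in X \cap Lw_{(r,s)}$, and by Lemma \ref{lem::linear-fibration} it is a good representative of $\mathcal{O}_{(r,s)}$.

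For part (1), I would observe that $\ell \in L$ stabilizes $x_{(r,s)}$ under the twisted action if and only if $\ell$ commutes with the involution $x_{(r,s)}\varepsilon$. Since $L$ is block-diagonal along the $4$-block decomposition, this commutation condition decouples. A short calculation shows that the first $L$-block of size $r+s$ splits along the $(r,s)$-sign pattern of $\eta_{(r,s)}$ into $G_r \times G_s$; the two middle $L$-blocks of size $m-r-s$ become identified diagonally because $w_{(r,s)}$ swaps them, contributing a single copy of $G_{m-r-s}$; and the last $L$-block of size $2n-2m+r+s$ splits analogously into $G_{n+s-m} \times G_{n+r-m}$. This gives the description of $L_x$ in part~(1).

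For part (2), both $\delta_Q$ and $\delta_{Q_x}$ are products of $\nu$-powers obtained by counting roots in the unipotent radicals of $Q$ and $Q_x$ respectively: $\delta_Q$ sums over all positive root spaces of $Q$, while $\delta_{Q_x}$ picks up only those root spaces fixed by the centralizer condition for $x_{(r,s)}\varepsilon$. Pairs of $L$-blocks lying inside the same $\pm 1$-eigenspace of $x_{(r,s)}\varepsilon$ contribute identically to both characters and hence cancel in $\delta_{Q_x}\delta_Q^{-1/2}$; only the pairs straddling the two eigenspaces survive, and combining their contributions yields precisely the powers of $\nu$ displayed in part~(2). The main obstacle here will be the bookkeeping: the exponents $(s-r)/2$ and $(r-s)/2$ depend delicately on which refined blocks lie in the ``$+$'' and which in the ``$-$'' eigenspace of $x_{(r,s)}\varepsilon$. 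Once $\eta_{(r,s)}$ is fixed and the unipotent roots are enumerated, however, the verification is mechanical.
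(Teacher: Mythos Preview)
The paper does not actually prove this lemma; it states the result and refers the reader to \cite[Section~4]{YangChang-LinearPeriods-MathZ} for the proof. Your strategy --- construct an explicit good representative $x_{(r,s)}=\eta_{(r,s)}w_{(r,s)}$ with $\eta_{(r,s)}$ a diagonal sign matrix, then compute $L_x$ as the centralizer of the involution $x_{(r,s)}\varepsilon$ in $L$, and finally obtain the modular character by a root count --- is exactly the natural approach and is essentially what the cited reference does.

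There is, however, one genuine slip in your proposal. You say that $\eta_{(r,s)}$ should be chosen so that the $\pm 1$-eigenspaces of $x_{(r,s)}\varepsilon$ have dimensions $n+r-s$ and $n+s-r$. This is not right: since $x_{(r,s)}$ must lie in $G\cdot e$, the involution $x_{(r,s)}\varepsilon$ is conjugate to $\varepsilon=\mathrm{diag}(I_n,-I_n)$, so both eigenspaces have dimension exactly $n$ regardless of $(r,s)$. The parameters $(r,s)$ are \emph{not} global eigenspace dimensions; they record the dimensions of the intersections $V_{+}\cap F^m$ and $V_{-}\cap F^m$, where $F^m$ is the span of the first $m$ standard basis vectors (the subspace stabilized by $P$). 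Concretely, you want the restriction of $x_{(r,s)}\varepsilon$ to the first $L$-block (size $r+s$) to have signature $(r,s)$, the middle two $L$-blocks to be swapped, and the last $L$-block (size $2n-2m+r+s$) to have signature $(n+s-m,\,n+r-m)$; summing gives $n$ and $n$ as required. Once $\eta_{(r,s)}$ is chosen with this corrected constraint, your computations in parts~(1) and~(2) go through exactly as you describe.
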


 \subsection{Consequences of the geometric lemma}

\begin{lem}\label{lem::cor-geo-lemm-linear}
	Let $P = MU$ be the standard parabolic subgroup of $G$ of type $(m,2n-m)$ and $\sigma$ a finite length smooth representation of $M$ on which $Z$ acts trivially. Let $\pi = I_P^G (\sigma)$. For $(r,s) \in \CI_m$, let $\CO$ be the $P$-orbit corresponding to $(r,s)$. Then:
	
	\textup{(1)}$\quad$ If $r + s = m $, then $L = M$ and $\CO$ is $M$-admissible. In this case, if $\CO$ is relevant to $\Ext{\sharp}_{H/Z} (\pi,\BC)$, then there exists a composition factor $\rho_1 \otimes \rho_2$ of $\sigma$ such that
	\begin{align*}
		w_{\rho_1} = |\cdot|_F^{-(r-s)^2/2}  \text{ and } w_{\rho_2} = |\cdot |_F^{(r-s)^2/2};
	\end{align*}
	If $r + s < m$ and $\CO$ is relevant to $\Ext{\sharp}_{H/Z} (\pi,\BC)$, then there exists a composition factor $\rho_1 \otimes \rho_2 \otimes \rho_3 \otimes \rho_4$ of $r_{L,M} (\sigma)$ such that
	\begin{align*}
		w_{\rho_1} = |\cdot|_F^{-(r-s)^2/2},\ w_{\rho_2} = w_{\rho_3}^{-1}  \text{ and } w_{\rho_4} = |\cdot |_F^{(r-s)^2/2}.
	\end{align*}
		
	\textup{(2)}$\quad $ If $\CO$ is relevant to $\EP_{H/Z} (\pi,\BC)$, we have $m = n$ and $ r = s =0$. 
\end{lem}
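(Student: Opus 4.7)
The plan is to apply Proposition \ref{prop::Geometric-lemma} to translate both the extension computation and the Euler--Poincar\'e pairing to $L_x$, then extract part (1) from Lemma \ref{lem::vanishing} and part (2) from Proposition \ref{prop::EP-trivial}. More precisely, since $Z=A_{G,H}$ acts trivially on $\sigma$, Proposition \ref{prop::Geometric-lemma} gives
\[
\Ext{q}_{H/Z}(\pi_\CO,\BC)\cong \Ext{q}_{L_x/Z}(r_{L,M}(\sigma),\delta_x),\quad q\geq 0,
\]
and the analogous identity after taking alternating sums, so relevance of $\CO$ on either side is equivalent to non-vanishing on the right-hand side.

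For part (1), the plan is to produce central elements of $L_x$ that witness the displayed central-character identities via Lemma \ref{lem::vanishing}. If $r+s=m$, part (1) of Lemma \ref{lem::linear-general-orbit} forces the middle blocks to vanish, so $L=M$, $w_{(r,s)}=I_{2n}$ (hence $\CO$ is $M$-admissible), and $r_{L,M}(\sigma)=\sigma$. For a composition factor $\rho_1\otimes \rho_2$ of $\sigma$, I will use $z_1=\mathrm{diag}(\lambda I_m, I_{2n-m})$ and $z_2=\mathrm{diag}(I_m, \lambda I_{2n-m})$, both lying in $Z(M)\cap L_x$ since $\lambda I_m=\mathrm{diag}(\lambda I_r,\lambda I_s)$ fits the shape of $L_x$ from Lemma \ref{lem::linear-general-orbit}. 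Using part (2) of that lemma together with the identity $(n-r)(s-r)+(n-s)(r-s)=(r-s)^2$, one computes $\delta_x(z_1)=|\lambda|_F^{-(r-s)^2/2}$ and $\delta_x(z_2)=|\lambda|_F^{(r-s)^2/2}$. Since $z_i$ acts on $\rho_1\otimes \rho_2$ by $w_{\rho_i}(\lambda)$, Lemma \ref{lem::vanishing} yields the stated identities. If $r+s<m$, the same recipe applied to a composition factor $\rho_1\otimes \rho_2\otimes \rho_3\otimes \rho_4$ of $r_{L,M}(\sigma)$ with the three central elements
\[
\mathrm{diag}(\lambda I_{r+s},I,I,I),\ \ \mathrm{diag}(I,\mu I_{m-r-s},\mu I_{m-r-s},I),\ \ \mathrm{diag}(I,I,I,\lambda I_{2n-2m+r+s})
\]
of $Z(L)\cap L_x$ produces $\delta_x$-values $|\lambda|_F^{-(r-s)^2/2}$, $1$, $|\lambda|_F^{(r-s)^2/2}$ and central actions $w_{\rho_1}(\lambda)$, $w_{\rho_2}(\mu)w_{\rho_3}(\mu)$, $w_{\rho_4}(\lambda)$, so Lemma \ref{lem::vanishing} delivers the three displayed identities.

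For part (2), I plan to apply Proposition \ref{prop::EP-trivial} to the right-hand EP with ambient $L/Z$ and subgroup $L_x/Z$: the EP vanishes whenever $(Z(L)\cap L_x)/Z$ contains an $F^\times$. By Lemma \ref{lem::linear-general-orbit}, $Z(L)\cap L_x$ consists of block-diagonal scalar matrices with one scalar per non-empty $L$-block, with the two tied middle scalars equal when $m-r-s>0$; modding by the diagonal $Z\cong F^\times$ gives a product of $F^\times$'s whose rank vanishes exactly when $L$ has a single ``effective'' non-empty block (after identifying the tied middle pair). Using $(r,s)\in\CI_m$, a short case check shows this forces $r=s=n+s-m=n+r-m=0$, i.e., $r=s=0$ and $m=n$; in every other case the quotient is non-trivial and the EP vanishes.

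The main technical obstacle is keeping the three block-coordinate systems ($M$, $L$, and $L_x$) aligned and verifying the identity $(n+s-m)(s-r)+(n+r-m)(r-s)=(r-s)^2$ that makes the $\delta_x$-values come out as the clean powers $|\lambda|_F^{\pm(r-s)^2/2}$; once these are in hand, the rest is a mechanical application of Lemmas \ref{lem::vanishing} and \ref{lem::linear-general-orbit} together with Propositions \ref{prop::Geometric-lemma} and \ref{prop::EP-trivial}.
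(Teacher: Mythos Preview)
Your proposal is correct and follows essentially the same approach as the paper: reduce via Proposition \ref{prop::Geometric-lemma} to $L_x/Z$, then for (1) apply Lemma \ref{lem::vanishing} with block-scalar central elements of $L_x/Z$ (the paper only writes out $z=\textup{diag}(tI_{r+s},I_{2n-r-s})$ and says ``similarly for other central characters'', which you have made explicit), and for (2) apply Proposition \ref{prop::EP-trivial} after observing that $(Z(L)\cap L_x)/Z$ contains a copy of $F^\times$ except when $r=s=0$ and $m=n$. Your computations of $\delta_x$ on the chosen central elements and your case analysis for (2) are accurate.
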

 \begin{proof}
 	For (1), we take $z = \textup{diag}(tI_{r+s}, I_{2n-r-s})$,viewed as a central element in $L_x /Z$ and apply Lemme \ref{lem::vanishing}. We can prove the assertion for $w_{\rho_1}$. Similarly for other central characters. For (2), this follows from Proposition \ref{prop::EP-trivial} since oterwise we have a nontrivial central subgroup of $L_x/Z$.
 \end{proof}

 \subsection{Vanishing of higher extension groups}
 
 Let $\rho$ be an irreducible cuspidal representation of $G_d$ with trivial central character. Let 
 \begin{align*}
 	\Delta = \{ \nu^{-(k-1)/2}\rho, \nu^{-(k-3)/2}\rho,\cdots,\nu^{(k-1)/2} \rho \}
 \end{align*}
 be a segment and $\pi = \textup{L}(\Delta)$ be the essentially square-integrable representations associated to $\Delta$.
 
 \begin{lem}\label{lem::linear-cuspidal}
 	Let $\rho, d$ be as above. Let $r,s$ be two nonnegative integers such that $d = r + s$. We have
 	\begin{align*}
 		&\Ext{1}_{G_r \times G_s} (\rho ,\BC) \cong  \Hom_{G_r \times G_s}  (\rho, \BC) \\
 		&\Ext{i}_{G_r \times G_s} (\rho ,\BC)  = 0, \ \text{ for all } i \geqslant 2.
 	\end{align*}
 \end{lem}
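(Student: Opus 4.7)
The plan is to apply the Hochschild--Serre spectral sequence of Lemma \ref{lem::HS-spectral} for the central subgroup $Z \subset G_r \times G_s$ of scalars $(tI_r, tI_s)$, $t \in F^\times$, combined with the classical fact that an irreducible cuspidal representation of $G_d$ with trivial central character is projective in $\CM(G_d/Z)$.

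Since $Z$ is the center of $G_d$ and $\rho$ has trivial central character, $Z$ acts trivially on $\rho$. Using Lemma \ref{lem::Ext-torus} for $Z \cong F^\times$ together with the elementary fact that when $Z$ acts trivially on $\rho$ one has $H_p(Z,\rho) \cong H_p(Z,\BC) \otimes \rho$ (take a projective resolution of $\BC$ in $\CM(Z)$ and tensor with $\rho$), I obtain $H_0(Z,\rho) \cong H_1(Z,\rho) \cong \rho$ as $(G_r \times G_s)/Z$-modules and $H_p(Z, \rho) = 0$ for $p \geq 2$. Hence the $E_2$-page of
\[
E_2^{p,q} = \Ext{q}_{(G_r \times G_s)/Z}(H_p(Z, \rho), \BC) \Longrightarrow \Ext{p+q}_{G_r \times G_s}(\rho, \BC)
\]
has exactly two nonzero columns, at $p = 0$ and $p = 1$, each equal to $\Ext{q}_{(G_r \times G_s)/Z}(\rho, \BC)$.

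The crucial step is to show $\Ext{q}_{(G_r \times G_s)/Z}(\rho, \BC) = 0$ for all $q \geq 1$. I would apply the Frobenius reciprocity of Lemma \ref{lem::Frobenius} (with $U = \{e\}$ and $M = G_r \times G_s$, viewed as the Levi of a standard parabolic in $G_d$) to identify this group with $\Ext{q}_{G_d/Z}(\rho, \Ind_{G_r \times G_s}^{G_d} \BC)$. Since $\rho$ is cuspidal with trivial central character, its matrix coefficients lie in $C_c^\infty(G_d/Z)$, so the Bernstein block of $\rho$ inside $\CM(G_d/Z)$ is equivalent to the category of finite-dimensional vector spaces; in particular $\rho$ is projective in $\CM(G_d/Z)$. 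This forces the higher Ext to vanish, while for $q = 0$ a second application of Frobenius gives $\Hom_{G_r \times G_s}(\rho, \BC)$.

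Feeding this back into the spectral sequence, only $E_2^{0,0}$ and $E_2^{1,0}$ survive, both equal to $\Hom_{G_r \times G_s}(\rho, \BC)$. Since the two surviving terms lie in the same row, no $d_r$ can connect them and the sequence collapses at $E_2$; reading off the filtration then yields $\Ext{1}_{G_r \times G_s}(\rho, \BC) \cong \Hom_{G_r \times G_s}(\rho, \BC)$ and $\Ext{i}_{G_r \times G_s}(\rho, \BC) = 0$ for $i \geq 2$. The only genuinely subtle ingredient is the projectivity of a cuspidal $\rho$ in $\CM(G_d/Z)$; this is standard but deserves an explicit mention, and is in the same spirit as the Adler--Roche-type input invoked at the end of Section \ref{sec::symplectic}.
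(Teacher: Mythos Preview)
Your proof is correct and follows essentially the same route as the paper: both apply the Hochschild--Serre spectral sequence for $Z \cong F^\times$, compute $H_p(Z,\rho)$ via Lemma~\ref{lem::Ext-torus}, and then invoke projectivity of $\rho$ in $\CM(G_d/Z)$. The only cosmetic difference is that the paper simply asserts that the restriction of a projective object to a closed subgroup remains projective (hence $\rho|_{G_r\times G_s}$ is projective in $\CM((G_r\times G_s)/Z)$), whereas you unpack this via Frobenius reciprocity $\Ext{q}_{(G_r\times G_s)/Z}(\rho,\BC)\cong \Ext{q}_{G_d/Z}(\rho,\Ind_{G_r\times G_s}^{G_d}\BC)$; these are the same argument, since the restriction--induction adjunction is precisely what makes restriction preserve projectives.
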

 \begin{proof}
 	Let $Z$ be the center of $G_d$. By Lemma \ref{lem::HS-spectral}, we have a spectral sequence
 	\begin{align*}
 		E^{p,q}_2 = \Ext{q}_{G_r \times G_s /Z} (H_p (Z,\rho), \BC)  \Longrightarrow  \Ext{p+q}_{G_r \times G_s} (\rho ,\BC)
 	\end{align*}
 	By Lemma \ref{lem::homology-G1G2} and Lemma \ref{lem::Ext-torus}, we have 
 	\begin{align*}
 		H_0(Z,\rho) = H_1 (Z,\rho) = \rho, \quad H_i (Z,\rho) = 0, \ i \geqslant 2.
 	\end{align*}
 	Thus the spectral sequence collapse at page $E_2$. Since $\rho$ is projective in $\CM(G_d/Z)$, its restriction to $G_r \times G_s$ is also  projective in $\CM (G_r \times G_s/Z)$. The lemma then follows easily from the spectral sequence above.
 \end{proof}

\begin{thm}\label{thm::Vanishing-linear}
	We have $\Ext{i}_{H/Z} (\pi,\BC) = 0$ for $i \geqslant 1$.
\end{thm}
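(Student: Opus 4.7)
The plan is to mirror the proof of Theorem \ref{thm::Ext-DS-GL-Sp}. First I would construct the Zelevinsky-Schneider-Stuhler resolution of $\pi = \textup{L}(\Delta)$. Take $P = P_{(d, d, \ldots, d)} = MU$ with $M \cong G_d^k$ and $\sigma = \nu^{-(k-1)/2}\rho \otimes \cdots \otimes \nu^{(k-1)/2}\rho$; for each $I \subseteq S_M = \{1, \ldots, k-1\}$ form the standard parabolic $P_I$ and the inducing datum $\sigma_I = \textup{Z}(\Delta_{I,1}) \otimes \cdots \otimes \textup{Z}(\Delta_{I,k-|I|})$ attached to the partition of $\Delta$ determined by $I$. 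The proof of Lemma \ref{lem::resolution-st} transports verbatim, as it uses only Zelevinsky's filtration results and the Schneider-Stuhler exactness criterion, which are insensitive to the symplectic context; this yields the acyclic complex
\[
0 \to I_G^G \sigma_{S_M} \to \cdots \to \bigoplus_{|I|=1} I_{P_I}^G \sigma_I \to I_P^G \sigma \to \pi \to 0.
\]
Splicing in an injective resolution of $\BC$ in $\CM(H/Z)$ yields a double complex whose associated spectral sequence abuts to $\Ext{\ast}_{H/Z}(\pi,\BC)$, its $E_1$-page being the groups $\Ext{q}_{H/Z}(I_{P_I}^G \sigma_I, \BC)$.

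The main technical step, the linear analogue of Proposition \ref{prop::product of speh}, is to prove $\Ext{i}_{H/Z}(I_{P_I}^G \sigma_I, \BC) = 0$ for every $i \geq 1$ and every $I \subseteq S_M$. Fix $I$ and apply the geometric lemma (Proposition \ref{prop::Geometric-lemma}) with respect to a maximal parabolic refining $P_I$, iterating to handle the smaller Levi pieces. By Lemma \ref{lem::cor-geo-lemm-linear}(1), a $P$-orbit contributes only when a composition factor of the Jacquet module of $\sigma_I$ admits central characters $|\cdot|_F^{\pm(r-s)^2/2}$; since $\Delta$ is symmetric about $\nu^0$, each $\Delta_{I,j}$ is a sub-segment of $\Delta$, and $\rho$ has trivial central character, a combinatorial check singles out the relevant pairings. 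For each such orbit the stabilizer $L_x$ is a product of general linear groups (Lemma \ref{lem::linear-general-orbit}), so by the Kunneth theorem (Theorem \ref{thm::Kunneth}) and Proposition \ref{prop::Geometric-lemma} the computation reduces to tensor products of Ext-groups of the shape $\Ext{j}_{(G_r \times G_s)/Z'}(\rho', \chi)$ with $\rho'$ a cuspidal summand of a Jacquet module. Lemma \ref{lem::linear-cuspidal} controls these extensions, and a Hochschild-Serre argument (Lemma \ref{lem::HS-spectral}, Lemma \ref{lem::Ext-torus}) upgrades the calculation from $G_r \times G_s$ to $(G_r \times G_s)/Z'$, killing the residual higher Ext. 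An induction on $k$ organizes the successive reductions.

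With that vanishing, only the $q = 0$ row of $E_1$ survives and $\Ext{i}_{H/Z}(\pi, \BC)$ equals the $i$-th cohomology of the row of $\Hom_{H/Z}$-groups. To see this cohomology is concentrated in degree zero, I would note that by Lemma \ref{lem::cor-geo-lemm-linear}(2) only the orbit with $m = n$ and $r = s = 0$ contributes a distinguishing form to each $I_{P_I}^G \sigma_I$, so each $\Hom_{H/Z}$-space is at most one-dimensional; with bases chosen compatibly along the fixed inclusions, the row differentials become alternating identities and the resulting Koszul-type complex is acyclic above degree zero, leaving only the Beuzart-Plessis-Wan multiplicity $m(\pi)$ in degree zero. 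The main obstacle lies in the second step: the orbit and Jacquet combinatorics for the multi-factor $I_{P_I}^G \sigma_I$ is appreciably more intricate than in the symplectic case, where the admissibility constraint on $\iota_M(\CO)$ immediately isolated a single orbit, and the central character bookkeeping at each inductive stage requires close attention.
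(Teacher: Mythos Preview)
Your approach differs substantially from the paper's. The paper does not use the Schneider--Stuhler resolution here; instead it argues by induction on $k$ using the two short exact sequences of Proposition~\ref{prop::Zel-ShortExactSeq} applied to the decomposition $\Delta = \{\nu^{-(k-1)/2}\rho\} \sqcup \Delta_1$, reducing to (i) $\Ext{i}_{H/Z}(\textup{L}(\Delta_1) \times \nu^{-(k-1)/2}\rho, \BC) = 0$ for $i \geqslant 1$ and (ii) $\Ext{i}_{H/Z}(\nu^{-(k-1)/2}\rho \times \textup{L}(\Delta_1), \BC) = 0$ for $i \geqslant 2$. These are handled by a short orbit analysis (central-character constraints plus Matringe's non-distinction for $G_r \times G_s$ with $r \neq s$), a Hochschild--Serre reduction via Lemma~\ref{lem::linear-cuspidal}, and the induction hypothesis applied to $\textup{L}(\Delta_2)$ with $\Delta_2$ two steps shorter. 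This route is lighter precisely because the linear case has no exceptional $\Ext{1}$ at $k=2$, so the full resolution machinery of the symplectic proof is unnecessary.

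Your plan, by contrast, has two genuine gaps. First, your ``main technical step'' --- vanishing of $\Ext{i}_{H/Z}(I_{P_I}^G \sigma_I, \BC)$ for $i \geqslant 1$ and all $I$ --- presupposes a linear analogue of Proposition~\ref{prop::speh-ext} for the Speh blocks $Z(\Delta_{I,j})$, which your sketch does not establish. The orbit combinatorics is genuinely harder here: Lemma~\ref{lem::cor-geo-lemm-linear}(1) does not isolate a single admissible orbit the way Lemma~\ref{lem::cor-geo-lemm-Sp}(1) did, and the sub-segments $\Delta_{I,j}$ are not centered at $\nu^0$, so the reduced problems on the smaller Levi factors are not instances of the same theorem at smaller $k$ and your proposed induction does not close. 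Second, your final step invokes Lemma~\ref{lem::cor-geo-lemm-linear}(2) to bound $\dim \Hom_{H/Z}(I_{P_I}^G \sigma_I, \BC)$, but that lemma concerns $\EP$-relevance, not $\Hom$-relevance; since the $I_{P_I}^G \sigma_I$ are reducible, multiplicity one for irreducibles does not apply, and without knowing precisely which $I$ yield nonzero $\Hom$ and of what dimension you cannot conclude that the $q=0$ row is acyclic in positive degrees.
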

 \begin{proof}
 	We are going to prove this by induction on $k$. The case $k  = 1$ is obvious since the restriction of $\rho$ to $H$ is projective in $\CM(H/Z)$. Assume that $k \geqslant 2$. Let $\Delta_1 = \Delta \backslash \{\nu^{-(k-1)/2} \rho \}$. Then by Proposition \ref{prop::Zel-ShortExactSeq} we have two short exact sequences
 	\begin{align}\label{formula::linear-exact-I}
 		0\ra \pi \ra \textup{L}(\Delta_1) \times \nu^{-(k-1)/2} \rho \ra L(\nu^{-(k-1)/2} \rho, \Delta_1) \ra 0
 	\end{align}
 	and
 	\begin{align}\label{formula::linear-exact-II}
 		0 \ra  L(\nu^{-(k-1)/2} \rho, \Delta_1)  \ra \nu^{-(k-1)/2} \rho  \times \textup{L}(\Delta_1) \ra \pi \ra 0
 	\end{align}
 	By the long exact sequences associated to \eqref{formula::linear-exact-I} and \eqref{formula::linear-exact-II}, the theorem will follow from the fact that
 	\begin{align}\label{formula::linear-ext-0-I}
 		\Ext{i}_{H/Z} ( \textup{L}(\Delta_1) \times \nu^{-(k-1)/2} \rho, \BC) = 0, \ \text{ for all } i \geqslant 1
 	\end{align}
 	and that
 	\begin{align}\label{formula::linear-ext-0-II}
 			\Ext{i}_{H/Z} (\nu^{-(k-1)/2} \rho  \times \textup{L}(\Delta_1), \BC) =  0 ,\ \text{ for all } i \geqslant 2.
 	\end{align}
 	
	For \eqref{formula::linear-ext-0-I}, when $k = 2$, the only possible $P$-orbit $\CO$ that is relevant to 
	\begin{align*}
		\Ext{i}_{H/Z} ( \nu^{1/2}\rho \times \nu^{-1/2} \rho, \BC)
	\end{align*}
	is such that $\iota_M (\CO) = w_M$. By Proposition \ref{prop::Geometric-lemma} and Lemma \ref{lem::linear-general-orbit}, we have
	\begin{align*}
		\Ext{i}_{H/Z} ( \nu^{1/2}\rho \times \nu^{-1/2} \rho, \BC)  \cong \Ext{i}_{G_d / Z_d} (\rho \otimes \rho, \BC) \cong \Ext{i}_{G_d/Z_d}  (\rho ,\rho^{\vee}).
	\end{align*}
	Thus the formula \eqref{formula::linear-ext-0-I} holds in this case since $\rho$ is projective in $\CM(G_d /Z_d)$. When $ k > 2$, the formula \eqref{formula::linear-ext-0-I} follows easily from the central character considerations in part (1) of Lemma \ref{lem::cor-geo-lemm-linear}. 
	
	Now we prove \eqref{formula::linear-ext-0-II}. It is easy to verify a special case where $d = 1$ and $k = 2$. That is, we have
	\begin{align*}
			\Ext{i}_{G_1 \times G_1/Z_2} (\nu^{-1/2}   \times \nu^{1/2}, \BC) =  0 ,\ \text{ for all } i \geqslant 2.
	\end{align*}
	In general, let $P = MU$ be the standard parabolic subgroup of $G_{2n}$ of type $(d,(k-1)d)$. For a $P$-orbit $\CO$, let $w,L$ and $x$ be the data associated to $\CO$. By the cuspidality of $\rho$, the left hand side of \eqref{formula::linear-ext-0-II} is supported only on those $\CO_{(r,s)}$ such that  $r = s = 0$ or $r + s = d$. We split the discussion into two cases.
	
	Case (I). Let $r + s = d$ and $\CO = \CO_{(r,s)}$. If $-(k-1)/2 \neq -(r-s)^2/2$, then $\CO$ is not relevant by the central character considerations in part (1) of Lemma \ref{lem::cor-geo-lemm-linear}. Assume on the contrary that $(k-1)/2 = (r-s)^2/2$. We may further assume that $d > 1$. Then by Proposition \ref{prop::Geometric-lemma} we have
	\begin{align*}
		\Ext{i}_{H/Z} ((\nu^{-(k-1)/2} \rho  \times \textup{L}(\Delta_1))_{\CO}, \BC)  =  \Ext{i}_{L_x/Z} ( \rho  \otimes \textup{L}(\Delta_1)', \BC),
	\end{align*}
	where $\textup{L}(\Delta_1)'$ is a twist of $\textup{L}(\Delta_1)$ with a trivial central character. Note that we may view $G_r \times G_s$ as a normal subgroup of $L_x/Z$ with quotient isomorphic to $G_{n-r} \times G_{n-s} /Z_{2n-r-s}$. Thus by Lemma \ref{lem::HS-spectral} and Lemma \ref{lem::homology-G1G2}, we have a spectral sequence
	\begin{align}\label{formula::linear-main-spectral}
		\Ext{q}_{G_{n-r} \times G_{n-s} /Z_{2n-r-s}}  (H_p (G_r \times G_s, \rho) \otimes \textup{L}(\Delta_1)', \BC) \Longrightarrow \Ext{p+q}_{L_x/Z} ( \rho  \otimes \textup{L}(\Delta_1)', \BC).
	\end{align} 
	Note that we have $d > 1$ and $r \neq s$ (otherwise $k =1$). By \cite[Theorem 3.1]{Matringe-Linear+Shalika-JNT}, we have
	\begin{align*}
		\Hom_{G_r \times G_s} (\rho, \BC) = 0.
	\end{align*}
	Thus by Lemma \ref{lem::linear-cuspidal} and \eqref{formula::dual-homology-ext}, we have $H_p (G_r \times G_s, \rho )  = 0$ for all $p$. So by \eqref{formula::linear-main-spectral}, we have that $\CO$ is not relevant to the left hand side of \eqref{formula::linear-ext-0-II}. 
	
	Case (II). Let $r = s = 0$. Let $\CO = \CO_{(0,0)}$. By Proposition \ref{prop::Geometric-lemma}, we have
	\begin{align*}
		\Ext{i}_{H/Z} ((\nu^{-(k-1)/2} \rho  \times \textup{L}(\Delta_1))_{\CO}, \BC)  =  \Ext{i}_{L_x/Z} ( \nu^{-(k-1)/2}\rho \otimes \nu^{(k-1)/2} \rho \otimes \textup{L}(\Delta_2), \BC),
	\end{align*}
	where $\Delta_2 = \Delta \backslash \{ \nu^{-(k-1)/2} \rho, \nu^{(k-1)/2} \rho \}$. In this case we view $G_d$ as a normal subgroup of $L_x/Z$ with quotient isomorphic to $G_{n-d,n-d}/Z_{2n-2d}$. Thus
	\begin{equation}\label{formula::linear-main-spectral-II}
		\begin{aligned}
			\Ext{q}_{G_{n-d} \times G_{n-d} /Z_{2n-2d}}  (H_p (G_d, &\nu^{-(k-1)/2}\rho \otimes \nu^{(k-1)/2} \rho) \otimes \textup{L}(\Delta_2), \BC) \\
			&\Longrightarrow \Ext{i}_{L_x/Z} ( \nu^{-(k-1)/2}\rho \otimes \nu^{(k-1)/2} \rho \otimes \textup{L}(\Delta_2), \BC).
		\end{aligned}
	\end{equation}
	Note that by Lemma \ref{lem::tensor-hom-ext},
	\begin{align*}
		\Ext{p}_{G_d} (\nu^{-(k-1)/2}\rho \otimes \nu^{(k-1)/2} \rho, \BC) \cong \Ext{p}_{G_d} (\rho, \rho^{\vee}),\text{ for all }p \geqslant 0.
	\end{align*}
	So if $\rho$ is not self-dual, we have that $\CO$ is not relevant to the lefh hand side of \eqref{formula::linear-ext-0-II}. This finishes the proof of  \eqref{formula::linear-ext-0-II}, actually for all $i \geqslant 0$. If $\rho$ is self-dual, then 
	\begin{align*}
		\Ext{p}_{G_d} (\rho,\rho^{\vee}) = \begin{cases*}
			 \BC,\quad p =0,1,\\
			 0,\quad \text{oterwise}.
		\end{cases*}
 	\end{align*}
	Hence the spectral sequence \eqref{formula::linear-main-spectral-II} collapse at page $E_2$ and \eqref{formula::linear-ext-0-II} follows easily from the induction hypothesis and the convergence of the spectral sequence \eqref{formula::linear-main-spectral-II}.
 \end{proof}
 
 \begin{rem}
 	It is not true that the higher extension group vanishes for all tempered representations. In fact, let $\pi_1$ resp. $\pi_2$ be irreducible cuspidal representations of $G_{2n_1}$ resp. $G_{2n_2}$, $n = n_1 + n_2$, such that they are $G_{n_1} \times G_{n_1}$ resp.  $G_{n_2} \times G_{n_2}$-distingusihed. Then one may see easily that $\Ext{1}_{H/Z} (\pi_1 \times \pi_2, \BC) \cong \BC$. 
 \end{rem}

 \subsection{The multiplicity formula}\label{sec::EP-linear}

 Let $H_0=\{\textup{diag}(a,a)|\;a\in G_n\}$. In this case the geometric multiplicity is defined to be 
 $$m_{geom}(\theta)=\int_{\Gamma_{ell}(H_0/Z_G)}D^H(t) c_\theta(t)dt$$
 where $\theta$ is a quasi-character on $G/Z_G$. Since for discrete series the multiplicity is equal to the multiplicity of the Shalika model, combining with the multiplicity formula of the Shalika model proved in \cite{Beuzart-Plessis-Wan-Duke-Shalika}, we know that the multiplicity formula
 $$m(\pi)=m_{geom}(\theta_\pi)$$
 holds for all discrete series of $G/Z_G$. 
 
 Let $P=MN$ be a parabolic subgroup of $G$ and assume that $\theta$ is the parabolic induction of a quasi-chracter $\theta_M$ of $M$ (in the sense of Section 4.7 of \cite{Beuzart-Plessis-UnitaryGGP-Archimedean}). The next proposition is a direct consequence of Proposition 4.7.1 of \cite{Beuzart-Plessis-UnitaryGGP-Archimedean}.
 
 \begin{prop}\label{prop::induction-geom-linear}
 	If $M\neq G_n\times G_n$, then $m_{geom}(\theta)=0$. If $M = G_n\times G_n$, we may just take $M$ to be $H$. Then 
 	$$m_{geom}(\theta)=\int_{\Gamma_{ell}(H_0/Z_G)} D^{H_0}(t)\theta_M(t)dt.$$
 \end{prop}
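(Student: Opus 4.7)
The plan is to follow the same template as the symplectic analogue proved just above, namely to combine Proposition~4.7.1 of Beuzart-Plessis (which expresses the value of an induced quasi-character on a regular semisimple element as a sum of $M$-conjugates weighted by discriminant ratios) with an explicit description of the elliptic regular conjugacy classes entering $m_{geom}$. The only substantive input is a classification of the possible Levis $M$ for which an elliptic regular class of $H_0/Z_G$ can meet $M$ inside $G$.

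First I would describe $\Gamma_{ell}(H_0/Z_G)$: since $H_0 = \{\mathrm{diag}(a,a)\}$ is diagonally embedded, an element $t = \mathrm{diag}(a,a)$ is elliptic regular in $H_0/Z_G$ precisely when $a$ is elliptic regular in $G_n/Z_n$, equivalently when the characteristic polynomial of $a$ is irreducible of degree $n$. Consequently the characteristic polynomial of $t$ in $G_{2n}$ is a square $p(X)^2$ with $p$ irreducible of degree $n$.

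Second, the support condition for the induced character forces $t$ to be $G$-conjugate to an element of $M = M_{(n_1,\dots,n_k)}$, which requires the degree sequence $(n_1,\dots,n_k)$ to be a refinement compatible with the factorization $p(X)^2$. Since $p$ is irreducible of degree $n$, every $n_i$ must be a positive multiple of $n$; together with $\sum n_i = 2n$ this forces either the trivial case $M = G$ or $M = G_n \times G_n$. Thus for any other proper standard Levi, Proposition~4.7.1 yields $\theta(t) = 0$ on all elliptic regular $t \in H_0/Z_G$, giving $m_{geom}(\theta) = 0$.

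Third, when $M = G_n \times G_n$, after $G$-conjugation we may identify $M$ with $H$; Proposition~4.7.1 then expresses $D^H(t)\,\theta(t)$ as a sum over $M$-conjugacy classes $t'$ in the $G$-orbit of $t$ meeting $M$, each contribution being $\bigl(D^G(t)/D^M(t')\bigr)^{1/2}\theta_M(t')$. Using the identity $\bigl(D^G(t)/D^M(t')\bigr)^{1/2} = D^H(t)/D^{H_0}(t')$ together with the natural measure-preserving correspondence between $\Gamma_{ell}(H_0/Z_G)$ and (sums over) $G$-orbits hitting $H$ (obtained exactly as in \cite{Waldspurger-TransferFactor-ClassicalGroup-Manuscripta} in the symplectic case), one collapses the formula to the claimed integral
\[
m_{geom}(\theta) = \int_{\Gamma_{ell}(H_0/Z_G)} D^{H_0}(t)\,\theta_M(t)\,dt.
\]
The main point requiring care is the bookkeeping of discriminant factors and the measure-theoretic identification between classes in $\Gamma_{ell}(H_0/Z_G)$ and their $G$-conjugates inside $M$; this is purely formal and entirely parallel to the symplectic case treated above, which is why the statement can be recorded as a direct consequence of Proposition~4.7.1 of \cite{Beuzart-Plessis-UnitaryGGP-Archimedean}.
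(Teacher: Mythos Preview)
Your overall architecture matches the paper's: both reduce everything to Proposition~4.7.1 of \cite{Beuzart-Plessis-UnitaryGGP-Archimedean}. However, there is a genuine gap in your write-up that is not merely cosmetic.

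You repeatedly speak of ``the value of the induced quasi-character on a regular semisimple element'' and write $\theta(t)$, but the elements $t=\mathrm{diag}(a,a)\in H_0$ with $a$ elliptic regular in $G_n$ are \emph{never} regular semisimple in $G=G_{2n}$: every eigenvalue of $a$ occurs with multiplicity two in $t$. This is precisely why the definition of $m_{geom}$ in the linear case reads
\[
m_{geom}(\theta)=\int_{\Gamma_{ell}(H_0/Z_G)}D^H(t)\,c_\theta(t)\,dt,
\]
with the regularized value $c_\theta(t)$ (the leading coefficient in the local germ expansion of the quasi-character near the semisimple point $t$) in place of $\theta(t)$. Your sentence ``Proposition~4.7.1 yields $\theta(t)=0$'' is therefore not meaningful as stated; what you need is that $c_\theta(t)=0$ whenever no $G$-conjugate of $t$ lies in $M$. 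Likewise, in the case $M=G_n\times G_n$ the identity coming from Proposition~4.7.1 relates $c_\theta(t)$ to $c_{\theta_M}(t')$, and the reason the right-hand side of the proposition involves the honest value $\theta_M(t')$ is that $t'=(a,a)$ \emph{is} regular in $M=G_n\times G_n$, so $c_{\theta_M}(t')=\theta_M(t')$ there.

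Once you replace $\theta(t)$ by $c_\theta(t)$ and invoke the part of Proposition~4.7.1 that describes the behaviour of $c_\theta$ (not merely of $\theta$ on the regular locus) under parabolic induction, your steps 2 and 3 go through: your analysis of when $t$ can be $G$-conjugated into $M_{(n_1,\dots,n_k)}$ via the $E$-linear structure is correct, and the discriminant bookkeeping is the same as in the symplectic case. But as written, the argument appeals to a character formula at points where it does not apply.
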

 \begin{prop}\label{prop::induction-EP-linear}
 	Let $2n = n_1 + n_2$. Let $\pi_1$ resp. $\pi_2$ be a finite length smooth representation of $G_{n_1}$ resp. $G_{n_2}$ with central characters $\chi$ resp. $\chi^{-1}$ so that $\pi_1 \times \pi_2$ has trivial central character. If $n_1 \neq n_2$, then we have 
 	\begin{align*}
 		\EP_{H/Z} (\pi_1 \times \pi_2,\BC)  =  0.
 	\end{align*}
 	If $n_1 = n_2 = n$, then
 	\begin{align*}
 		\EP_{H/Z} (\pi_1 \times \pi_2,\BC) = \EP_{G_n/Z_n}(\pi_1 \otimes \pi_2,\BC). 
 	\end{align*}
 \end{prop}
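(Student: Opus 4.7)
The plan is to apply the geometric lemma (Proposition~\ref{prop::Geometric-lemma}) together with the orbit analysis of Section~\ref{sec::linear} to the maximal parabolic $P = P_{(n_1, n_2)} = MU$ of $G = G_{2n}$. The tensor product $\sigma = \pi_1 \otimes \pi_2$ has central character $\chi \otimes \chi^{-1}$, which restricts trivially to the diagonal center $Z = Z_G$, so $\pi_1 \times \pi_2 = I_P^G(\sigma)$ lies in $\CM(G/Z)$ and Lemma~\ref{lem::cor-geo-lemm-linear} is applicable. The Bernstein-Zelevinsky filtration expresses $(\pi_1 \times \pi_2)|_H$ as a successive extension of pieces $\pi_\CO$ indexed by the $P$-orbits $\CO$ in $G \cdot e$, parameterized by $\CI_{n_1}$ via Lemma~\ref{lem::linear-fibration}. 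By additivity of the Euler-Poincar\'e pairing on short exact sequences (whenever defined),
\begin{align*}
	\EP_{H/Z}(\pi_1 \times \pi_2, \BC) = \sum_{\CO} \EP_{H/Z}(\pi_\CO, \BC).
\end{align*}

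The key input is part~(2) of Lemma~\ref{lem::cor-geo-lemm-linear}: an orbit $\CO = \CO_{(r,s)}$ contributes to $\EP_{H/Z}$ only if $n_1 = n$ and $(r,s) = (0,0)$. Consequently, when $n_1 \neq n_2$ we have $n_1 \neq n$, no orbit is relevant, and the sum is identically zero; this settles the first assertion.

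When $n_1 = n_2 = n$, only the orbit $\CO_{(0,0)}$ can contribute. For this orbit, the Weyl element $w_{(0,0)}$ is the block antidiagonal matrix that swaps the two copies of $G_n$, so it normalizes $M$; hence $L = M \cap w_{(0,0)} M w_{(0,0)}^{-1} = M$ and the Jacquet functor $r_{L,M}$ is the identity. Specializing Lemma~\ref{lem::linear-general-orbit} to $(r, s, m) = (0, 0, n)$ identifies the stabilizer $L_x = \{\textup{diag}(g, g) : g \in G_n\}$ as the diagonal copy of $G_n$ (under which $Z = Z_G$ is identified with $Z_n$) and shows that $\delta_x$ is trivial. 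Proposition~\ref{prop::Geometric-lemma} then yields
\begin{align*}
	\Ext{q}_{H/Z}(\pi_{\CO_{(0,0)}}, \BC) \cong \Ext{q}_{G_n/Z_n}(\pi_1 \otimes \pi_2, \BC)
\end{align*}
for every $q \geqslant 0$, where on the right $G_n$ acts diagonally. Taking the alternating sum of dimensions completes the proof.

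There is no serious obstacle; the argument is essentially a bookkeeping exercise once Lemma~\ref{lem::cor-geo-lemm-linear}(2) is available. The only points needing care are verifying that $L = M$ for the orbit $\CO_{(0,0)}$ so the Jacquet module collapses, the explicit identification of $L_x$ as the diagonally embedded $G_n$ with its center matching $Z$, and the triviality of the modular character $\delta_x$.
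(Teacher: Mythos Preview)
Your proof is correct and follows exactly the approach the paper indicates: the paper's own proof is the single sentence ``This follows immediately from part (2) of Lemma~\ref{lem::cor-geo-lemm-linear} and Proposition~\ref{prop::Geometric-lemma},'' and you have simply unpacked those two ingredients, correctly identifying $L=M$, $L_x \cong \Delta G_n$, and $\delta_x = \mathbf{1}$ for the orbit $\CO_{(0,0)}$ via Lemma~\ref{lem::linear-general-orbit}.
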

 \begin{proof}
 	This follows immediately from part (2) of Lemma \ref{lem::cor-geo-lemm-linear} and Proposition \ref{prop::Geometric-lemma}.
 \end{proof}
 \begin{cor}
 	We have $\EP_{H/Z} (\pi,\BC) = m_{geom}(\pi)$ for any finite length representation $\pi$ of $G$.
 \end{cor}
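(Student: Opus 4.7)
The plan is to verify the identity on standard modules. By the Langlands classification (Zelevinsky's theorem for $\GL_n$), the standard modules $\pi = \delta_1 \times \cdots \times \delta_k$, where each $\delta_i$ is an essentially square-integrable representation of $G_{n_i}$ and $n_1 + \cdots + n_k = 2n$, form a $\BZ$-basis of the Grothendieck group of finite length smooth representations of $G$. Since both $\EP_{H/Z}(\bullet,\BC)$ and $m_{geom}(\theta_{\bullet})$ factor through this Grothendieck group, it suffices to check the identity for such $\pi$, and we may assume $\pi$ has trivial central character, which forces $\prod_i \omega_{\delta_i} = 1$ on $F^{\times}$.

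The next step is to show that both sides vanish unless $k = 2$ and $n_1 = n_2 = n$. If some $n_i \neq n$, I would use that parabolic induction is commutative in the Grothendieck group to rearrange $\pi$ as $\delta_i \times \sigma'$, where $\sigma' = \prod_{j \neq i}\delta_j$ is a finite length representation of $G_{2n - n_i}$. The central character of $\sigma'$ is $\omega_{\delta_i}^{-1}$, so Proposition \ref{prop::induction-EP-linear} applies and yields $\EP_{H/Z}(\pi,\BC) = 0$. On the geometric side, $\pi$ is induced from a Levi not conjugate to $G_n \times G_n$, so Proposition \ref{prop::induction-geom-linear} gives $m_{geom}(\theta_\pi) = 0$. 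Both sides vanish together, so the only remaining possibility to treat is $k = 2$ with $n_1 = n_2 = n$.

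In this final case $\pi = \delta_1 \times \delta_2$ with $\delta_1, \delta_2$ discrete series of $G_n$ whose central characters are mutually inverse (hence unitary). Proposition \ref{prop::induction-EP-linear} gives $\EP_{H/Z}(\pi,\BC) = \EP_{G_n/Z_n}(\delta_1 \otimes \delta_2,\BC)$, while Proposition \ref{prop::induction-geom-linear}, combined with the identification $H_0 \cong G_n$ via $\mathrm{diag}(a,a) \mapsto a$, gives $m_{geom}(\theta_\pi) = \int_{\Gamma_{ell}(G_n/Z_n)} D^{G_n}(a)\theta_{\delta_1}(a)\theta_{\delta_2}(a)\,da$. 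Applying Lemma \ref{lem::tensor-hom-ext}, the EP side rewrites as $\EP_{\CM^{\,\chi}(G_n, Z_n)}(\delta_1, \delta_2^{\vee})$, and the required equality then reduces to Kazhdan's orthogonality (Schneider--Stuhler's formula) for the Euler--Poincar\'e pairing of two discrete series modulo centre, with unitarity of the central character identifying $\overline{\theta_{\delta_2^{\vee}}(a)}$ with $\theta_{\delta_2}(a)$.

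The main obstacle I expect is the final invocation of Kazhdan--Schneider--Stuhler orthogonality: one must apply it in the form appropriate to the category $\CM^{\,\chi}(G_n, Z_n)$ with a fixed non-trivial central character $\chi$, rather than in its more standard formulation for the category of representations of $G/Z$ with trivial central character. Once the central-character bookkeeping is settled, the two elliptic integrals match on the nose and the corollary follows.
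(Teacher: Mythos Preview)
Your reduction to standard modules and your handling of the case $k\geqslant 2$ are essentially the paper's argument, and the final appeal to Kazhdan orthogonality is correct. However, there is a genuine gap: the case $k=1$, where $\pi=\delta_1$ is itself an essentially square-integrable representation of $G_{2n}$, is mishandled. You claim both sides vanish whenever some $n_i\neq n$, and for $k=1$ you would be writing $\pi=\delta_1\times\sigma'$ with $\sigma'$ the trivial representation of $G_0$. But Proposition~\ref{prop::induction-EP-linear} and Proposition~\ref{prop::induction-geom-linear} concern induction from a \emph{proper} parabolic; neither applies with $n_2=0$, and the conclusion is in fact false. For instance, an $H$-distinguished cuspidal representation $\pi$ of $G_{2n}$ has $\EP_{H/Z}(\pi,\BC)=m(\pi)=1$ and $m_{geom}(\theta_\pi)=1$, not zero.

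The paper treats the discrete series case separately, prior to the corollary: Theorem~\ref{thm::Vanishing-linear} gives $\Ext{i}_{H/Z}(\pi,\BC)=0$ for $i\geqslant 1$, so $\EP_{H/Z}(\pi,\BC)=m(\pi)$, and the paragraph preceding Proposition~\ref{prop::induction-geom-linear} invokes the multiplicity formula $m(\pi)=m_{geom}(\theta_\pi)$ for discrete series (via the Shalika model and \cite{Beuzart-Plessis-Wan-Duke-Shalika}). Once you insert this ingredient for $k=1$, your proof is complete and coincides with the paper's.
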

 \begin{proof}
 	By Proposition \ref{prop::induction-geom-linear} and Proposition \ref{prop::induction-EP-linear} it suffices to show that, when $M = H$ and $\pi = \pi_1 \times \pi_2$, we have
 	\begin{align*}
 		\EP_{G_n/Z_n}(\pi_1 \otimes \pi_2,\BC)  = \int_{\Gamma_{ell}(H_0/Z_G)} D^{H_0}(t)\theta_M(t)dt.
 	\end{align*}
 	This follows immediately from the Kazhdan orthogonality proved by Schneider and Stuhler and by Bezrukavnikov (see \cite[Theorem 6.1]{Prasad-IHES-Note}).
 \end{proof}

 \subsection{$H$-relatively supercuspidal representations}
  For our last result it will be convenient to take  $\theta = \text{Ad}(\varepsilon)$ and $H = G^{\theta}$, where $\varepsilon = \textup{diag}(1,-1,1,\cdots,1,-1)$. Let $M$ be a standard Levi subgroup of $G$ of type $(n_1,\cdots,n_k)$. Recall that an irreducible representation $\sigma_1 \boxtimes \sigma_2 \boxtimes \cdots \boxtimes \sigma_k$ of $M$ is said to be regular if $\sigma_i \ncong \sigma_j$ for $i \neq j$. 
 \begin{prop}\label{prop::RSC-linear}
 	An irreducible representation $\pi$ of $G$ is $H$-relatively supercuspidal if and only if $\pi = I_P^G (\sigma)$ where $P = MU$ is the standard parabolic subgroup of type $(2n_1,2n_2,\cdots,2n_k)$ and $\sigma$ is a regular $M \cap H$-distinguished supercuspidal representation of $M$.
 \end{prop}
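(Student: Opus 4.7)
The proof will follow the strategy of Proposition~\ref{prop::RSC-symplectic} for the symplectic pair. The key ingredients are a classification of irreducible $H$-relatively discrete series representations in the linear case, an analysis of the Aubert--Zelevinsky involution $D$, and the Cai--Fan criterion (Proposition~4.2 of \cite{Cai-Fan-RelativeCuspidal-Ext}) relating $H$-relative supercuspidality to the non-vanishing of a top Ext-group against $D(\pi)^{\vee}$.

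The starting point is the known classification of irreducible $H$-relatively discrete series in the linear case (via Offen's analysis of the Plancherel decomposition for $(G_{2n}, G_n \times G_n)$ or Matringe's work on linear periods): every such representation is of the form $I_P^G(\sigma)$ where $P = MU$ is a standard parabolic of type $(2n_1,\ldots,2n_k)$ and $\sigma = \sigma_1 \boxtimes \cdots \boxtimes \sigma_k$ is a cuspidal $M \cap H$-distinguished representation of $M$. Since any $H$-relatively supercuspidal representation is in particular relatively discrete series, it remains to pick out which of these are RSC, and the claim is that this happens precisely when $\sigma$ is regular. For the \emph{if} direction, regularity ensures that $\pi = I_P^G(\sigma)$ is irreducible, while $H$-distinction follows from that of $\sigma$ via the closed $P$-orbit and Proposition~\ref{prop::Geometric-lemma}. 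Moreover, since the Aubert--Zelevinsky involution commutes with parabolic induction and fixes cuspidal representations, one has $D(\pi) \cong \pi$. The Cai--Fan criterion then reduces RSC to the non-vanishing of $\Ext{d}_{H/Z}(\pi^{\vee}, \BC)$ in the appropriate top degree $d$ dictated by Schneider--Stuhler duality. Applying the geometric lemma to $\pi^{\vee} = I_P^G(\sigma^{\vee})$, the cuspidality of each $\sigma_i$ forces only the closed $P$-orbit to contribute, and Theorem~\ref{thm::Kunneth} combined with Lemma~\ref{lem::HS-spectral} applied to the central split torus of $M \cap H$ modulo $Z$ produces the required non-zero class in top degree.

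For the \emph{only if} direction, assume $\pi$ is RSC, so by the classification $\pi = I_P^G(\sigma)$ with the asserted shape; it remains to rule out non-regular $\sigma$. If $\sigma_i \cong \sigma_j$ for some $i \neq j$, then $I_P^G(\sigma)$ is reducible and its composition factors may be analyzed using the standard exact sequences of Proposition~\ref{prop::Zel-ShortExactSeq} together with the geometric lemma. An orbit-by-orbit comparison then combines Theorem~\ref{thm::Vanishing-linear} (vanishing of higher Ext for discrete series) with Proposition~\ref{prop::EP-trivial} (Euler--Poincar\'e vanishing on orbits with a non-trivial central split torus) to force the top Ext of $D(\pi)^{\vee}$ to vanish, contradicting the Cai--Fan criterion and thereby excluding the non-regular case. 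The main technical obstacle is the orbit-by-orbit bookkeeping in this non-regular analysis, together with correctly identifying the Cai--Fan top degree for the linear pair; the latter requires tracking the Schneider--Stuhler parity and the split rank of $M \cap H / Z$, both of which enter non-trivially because multiple blocks $(2n_1,\ldots,2n_k)$ are allowed simultaneously.
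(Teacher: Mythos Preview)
Your proposal has a genuine gap at the very first step: the classification of $H$-relatively discrete series you quote is incorrect, and this throws off the entire ``only if'' direction.

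You assert that every $H$-relatively discrete series is of the form $I_P^G(\sigma)$ with $\sigma$ a \emph{cuspidal} $M\cap H$-distinguished representation (not necessarily regular), and then propose to single out the RSC ones by ruling out non-regular $\sigma$. But the paper's argument (and the correct input) is different: by Duhamel's Plancherel decomposition \cite{Duhamel-Plancherel-Linear-Shalika} together with Arthur's endoscopic classification \cite{Arthur-Endoscopic}, the $H$-relatively discrete series are exactly the $I_P^G(\sigma)$ with $\sigma$ a \emph{regular} $M\cap H$-distinguished \emph{discrete series} of $M$. Regularity is already built in at the level of relatively discrete series; what distinguishes RSC from the rest is that each block $\sigma_i$ must be supercuspidal rather than a general $\St_{k_i}(\rho_i)$. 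Your ``only if'' argument therefore targets the wrong obstruction: you should be eliminating the case where some $\sigma_i$ is a non-cuspidal generalized Steinberg, not the case where $\sigma$ is non-regular.

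Concretely, following the Cai--Fan template (as the paper does, by analogy with the Jacquet--Rallis case in \cite{Cai-Fan-RelativeCuspidal-Ext}), one applies the criterion to $D(\pi)^{\vee}$. When $\sigma_i = \St_{k_i}(\rho_i)$ with some $k_i \geqslant 2$, the Aubert--Zelevinsky dual $D(\pi)$ involves the Zelevinsky-segment representations $Z(\Delta_i)$ rather than cuspidals, and the top-Ext computation for $I_P^G(D(\sigma)^{\vee})$ is what forces the vanishing in that case; this is the step where the top degree fails to be reached, precisely because $k_i>1$ increases the top degree required while the actual Ext-support does not. Your orbit analysis invoking Theorem~\ref{thm::Vanishing-linear} and Proposition~\ref{prop::EP-trivial} to handle the non-regular case is not the relevant computation. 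Also, the attributions to Offen and Matringe for the Plancherel decomposition in this case are off; the reference you need is Duhamel.
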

\begin{proof}
	The line of arguments is very similar to that in the Jacquet-Rallis case in \cite{Cai-Fan-RelativeCuspidal-Ext}, so we omit the details. We point out only that the Plancherel decomposition for $G/H$ has been established in the work of Duhamel \cite{Duhamel-Plancherel-Linear-Shalika}. It follows that $\pi$ is an $H$-relatively discrete series representation if and only if $\pi$ lies in the image of discrete series of $\SO_{2n+1}(F)$ under the endoscopy transfer. By \cite{Arthur-Endoscopic}, this is equivalent to say that $\pi = I_P^G (\sigma)$ where $\sigma$ is a regular $M \cap H$-distinguished discrete series of $M$.  
\end{proof}
 
 %\subsection{The orbit analysis}

% 
%		\bibliographystyle{alphanum}
%	%    Insert the bibliography data here.
%	\bibliography{references}

\begin{thebibliography}{33}
		\providecommand{\natexlab}[1]{#1}
		\providecommand{\url}[1]{\texttt{#1}}
		\expandafter\ifx\csname urlstyle\endcsname\relax
		\providecommand{\doi}[1]{doi: #1}\else
		\providecommand{\doi}{doi: \begingroup \urlstyle{rm}\Url}\fi
		
		 \bibitem[AR]{Adler-Roche-Proj-Cuspidal}
		 Jeffrey~D Adler and Alan Roche.
		 \newblock Injectivity, projectivity and supercuspidal representations.
		 \newblock {\em Journal of the London Mathematical Society}, 70(2):356--368,
		 2004.
		 
		 \bibitem[AS]{Aizenbud-Sayag-HomologicalMulti}
		 Avraham.~Aizenbud and Eitan.~Sayag.
		 \newblock Homological multiplicities in representation theory of $p$-adic groups.
		 \newblock {\em Mathematische Zeitschrift}, 294:451--469, 2020.
		 
		 \bibitem[Art]{Arthur-Endoscopic}
		 James Arthur.
		 \newblock {\em The Endoscopic classification of representations orthogonal and
		 	symplectic groups}, volume~61.
		 \newblock American Mathematical Soc., 2013.
		 
		 \bibitem[BD]{Blanc-Delorme}
		 Philippe Blanc and Patrick Delorme.
		 \newblock Vecteurs distributions $H$-invariants de repr{\'e}sentations
		 induites, pour un espace sym{\'e}trique r{\'e}ductif $p$-adique $G/H$.
		 \newblock {\em Annales de l'institut Fourier}, volume~58, pages 213--261,
		 2008.
		 
		 \bibitem[BP]{Beuzart-Plessis-UnitaryGGP-Archimedean}
		 Rapha{\"e}l Beuzart-Plessis.
		 \newblock A local trace formula for the Gan-Gross-Prasad conjecture for unitary
		 groups: the Archimedean case.
		 \newblock {\em ASTERISQUE}, Issue:418, 2020.
		 
		 \bibitem[BPW1]{Beuzart-Plessis-Wan}
		 R.~Beuzart-Plessis and C.~Wan.
		 \newblock A local twisted trace formula for Whittaker induction of coregular
		 symmetric pairs: the geometric side.
		 \newblock {\em Preprint}.
		 
		 \bibitem[BPW2]{Beuzart-Plessis-Wan-Duke-Shalika}
		 Rapha{\"e}l Beuzart-Plessis and Chen Wan.
		 \newblock {A local trace formula for the generalized Shalika model}.
		 \newblock {\em Duke Mathematical Journal}, 168(7):1303 -- 1385, 2019.
		 
		 \bibitem[BW]{Borel-Wallach}
		 Armand Borel and Nolan~R Wallach.
		 \newblock {\em Continuous cohomology, discrete subgroups, and representations
		 	of reductive groups}.
		 \newblock Number~67. American Mathematical Soc., 2000.
		 
		 \bibitem[BZ]{B-Z-I}
		 IN~Bernstein and Andrey~V Zelevinsky.
		 \newblock Induced representations of reductive $\frp$-adic groups. I.
		 \newblock {\em Annales scientifiques de l'{\'E}cole normale sup{\'e}rieure},
		 volume~10, pages 441--472, 1977.
		 
		 \bibitem[Cas]{Casselman-non-unitary-argument-homology}
		 William Casselman.
		 \newblock A new non-unitarity argument for $p$-adic representations.
		 \newblock {\em J.fac.sci.univ.tokyo Sect.ia Math}, 28(3):907--928, 1981.
		 
		 \bibitem[CF1]{Cai-Fan-RelativeCuspidal-Ext}
		 Li~Cai and Yangyu Fan.
		 \newblock Top degree Ext-groups and relatively supercuspidal spectra.
		 \newblock {\em Advances in Mathematics}, 410:108744, 2022.
		 
		 \bibitem[CF2]{Cai-Fan-TripleProduct-MRL}
		 Li~Cai and Yangyu Fan.
		 \newblock Higher Ext-groups in the triple
		 product case.
		 \newblock {\em Mathematical Research Letters}, 30(1):33--49, 2023.
		 
		 \bibitem[Cha]{Chan-Ext-StandardModule-MathZ}
		 Kei~Yuen Chan.
		 \newblock Ext-multiplicity theorem for standard representations of $(\GL_{n+1}(F),\GL_{n}(F))$.
		 \newblock {\em Mathematische Zeitschrift}, 303(2):45, 2023.
		 
		 \bibitem[Che]{Chen-Ext-Vanishing-Bessel}
		 Rui Chen.
		 \newblock Ext-Bessel model vanishes for tempered representations.
		 \newblock {\em arXiv:2303.12619}, 2023.
		 
		 \bibitem[CS]{Chan-Savin-Ext-Vanishing-Duke}
		 Kei~Yuen Chan and Gordan Savin.
		 \newblock A vanishing ext-branching theorem for $(\GL_{n+1}(F),\GL_{n}(F))$.
		 \newblock {\em Duke Mathematical Journal}, 170(10):2237--2261, 2021.
		 
		 \bibitem[Duh]{Duhamel-Plancherel-Linear-Shalika}
		 Nicolas.~Duhamel
		 \newblock Formule de Plancherel sur $\GL_n \times \GL_n \backslash GL_{2n}$.
		 \newblock {\em arXiv:1912.08497 }, 2019.
		 
		 \bibitem[HR]{Heumos-Rallis}
		 Michael Heumos and Stephen Rallis.
		 \newblock Symplectic-whittaker models for $\GL_n$.
		 \newblock {\em Pacific Journal of Mathematics}, 146(2):247--279, 1990.
		 
		 \bibitem[HW]{Helminck-Wang-Involution}
		 A.~G.~Helminck and S.~Wang.
		 \newblock On Rationality Properties of Involutions of Reductive Groups.
		 \newblock {\em Advances in Mathematics}, 99(1):26--96, 1993.
		 
		 \bibitem[KT]{Kato-Takano-Subrepresentation-symmetric}
		 Shin-ichi Kato and Keiji Takano.
		 \newblock {Subrepresentation Theorem for $p$-adic Symmetric Spaces}.
		 \newblock {\em International Mathematics Research Notices}, 2008:rnn028, 2008.
		 
		 \bibitem[LM]{Lapid-Minguez-ParabolicInduction}
		 Erez Lapid and Alberto M{\'\i}nguez.
		 \newblock On parabolic induction on inner forms of the general linear group
		 over a non-archimedean local field.
		 \newblock {\em Selecta Mathematica}, 22(4):2347--2400, 2016.
		 
		 \bibitem[LO]{Lapid-Offen-Explicit-Plancherel-GL-Sp}
		 Erez Lapid and Omer Offen.
		 \newblock Explicit Plancherel formula for the space of symplectic forms.
		 \newblock {\em International mathematics research notices},
		 2022(18):14255--14294, 2022.
		 
		 \bibitem[Mac]{Maclane-Homology}
		 Saunders MacLane.
		 \newblock {\em Homology}.
		 \newblock Springer Science \& Business Media, 2012.
		 
		 \bibitem[Mat]{Matringe-Linear+Shalika-JNT}
		 Nadir Matringe.
		 \newblock Linear and Shalika local periods for the mirabolic group, and some
		 consequences.
		 \newblock {\em Journal of Number Theory}, 138:1--19, 2014.
		 
		 \bibitem[Mit]{Mitra-Offen-U2n-Sp2n}
		 Omer Mitra, A.~Offen.
		 \newblock On distinguished representations of the quasi-split unitary groups.
		 \newblock {\em Journal of the Institute of Mathematics of Jussieu: JIMJ},
		 20(1), 2021.
		 
		 \bibitem[MOS]{Mitra-Offen-Sayag-Klyachko}
		 Arnab Mitra, Omer Offen, and Eitan Sayag.
		 \newblock Klyachko models for ladder representations.
		 \newblock {\em Documenta Mathematica}, 22:611--657, 2017.
		 
		 \bibitem[Off1]{Offen-symplectic-disc-spectrum-IsraelJournal}
		 Omer Offen.
		 \newblock On Symplectic periods of the discrete spectrum of $\GL_{2n}$.
		 \newblock {\em Israel Journal of Mathematics}, 154(1):253--298, 2006.
		 
		 \bibitem[Off2]{Offen-Residual-Duke}
		 Omer Offen.
		 \newblock Residual spectrum of $\GL_{2n}$ distinguished by the symplectic group.
		 \newblock {\em Duke Journal of Mathematics}, 2006.
		 
		 \bibitem[Off3]{Offen-ParabolicInduction-JNT}
		 Omer Offen.
		 \newblock On parabolic induction associated with a $p$-adic symmetric space.
		 \newblock {\em Journal of Number Theory}, 170:211--227, 2017.
		 
		 \bibitem[Orl]{Orlik-Extensions-Steinberg-JoA}
		 Sascha Orlik.
		 \newblock On extensions of generalized Steinberg representations.
		 \newblock {\em Journal of Algebra}, 293(2):611--630, 2005.
		 
		 \bibitem[Pra1]{Prasad-Ext-BranchingLaw-ICM2018}
		 Dipendra Prasad.
		 \newblock Ext-analogues of branching laws.
		 \newblock In {\em Proceedings of the International Congress of Mathematicians
		 	Rio de Janeiro 2018}, volume~II, pages 1367--1392. World Sci. Publ.,
		 Hackensack, NJ, 2018.
		 
		 \bibitem[Pra2]{Prasad-IHES-Note}
		 Dipendra Prasad.
		 \newblock Homological aspects of branching laws.
		 \newblock {\em rXiv:2302.03492}, 2023.
		 
		 \bibitem[SS]{Schneider-Stuhler-Cohomology-Invention-91}
		 Peter Schneider and Ulrich Stuhler.
		 \newblock The cohomology of $p$-adic symmetric spaces.
		 \newblock {\em Invent. math}, 105(1):47--122, 1991.
		 
		 \bibitem[SV]{SV-periods}
		 Yiannis Sakellaridis and Akshay Venkatesh.
		 \newblock {\em Periods and harmonic analysis on spherical varieties}.
		 \newblock Number 396. 2017.
		 
		 \bibitem[Wal]{Waldspurger-TransferFactor-ClassicalGroup-Manuscripta}
		 J~L Waldspurger.
		 \newblock Les facteurs de transfert pour les groupes classiques: un formulaire.
		 \newblock {\em manuscripta mathematica}, 133:41--82, 2010.
		 
		 \bibitem[Wan]{Wan-Multiplicity-JEMS}
		 Chen Wan.
		 \newblock On a multiplicity formula for spherical varieties.
		 \newblock {\em Journal of the European Mathematical Society},
		 24(10):3629--3678, 2021.
		 
		 \bibitem[Yan]{YangChang-LinearPeriods-MathZ}
		 Chang Yang.
		 \newblock Linear periods for unitary representations.
		 \newblock {\em Mathematische Zeitschrift}, 302(4):2253--2284, 2022.
		 
		 \bibitem[Zel]{Zelevinsky-II}
		 A.~V. Zelevinsky.
		 \newblock Induced representations of reductive $\frp$-adic groups. II. on
		 irreducible representations of $\mathrm{GL}(n)$.
		 \newblock {\em Annales Scientifiques de l'{\'E}cole Normale Sup{\'e}rieure},
		 13(2):165--210, 1980.
		
		
	\end{thebibliography}

\end{document}